\documentclass{amsart}
\usepackage{graphicx}
\sloppy

\newtheorem{theorem}{Theorem}[section]
\newtheorem{lemma}[theorem]{Lemma}
\newtheorem{corollary}[theorem]{Corollary}

\newtheorem{problem}[theorem]{Problem}

\theoremstyle{definition}

\newtheorem{example}[theorem]{Example}

\newtheorem{claim}[theorem]{Claim}
\newtheorem{conjecture}[theorem]{Conjecture}

\theoremstyle{remark}

\numberwithin{equation}{section}

%    Absolute value notation

%    Blank box placeholder for figures (to avoid requiring any
%    particular graphics capabilities for printing this document).

\begin{document}

\title{Bridge position and the representativity of spatial graphs}

%    Information for first author
\author{Makoto Ozawa}
%    Address of record for the research reported here
\address{Department of Natural Sciences, Faculty of Arts and Sciences, Komazawa University, 1-23-1 Komazawa, Setagaya-ku, Tokyo, 154-8525, Japan}
%    Current address
\curraddr{Department of Mathematics and Statistics, The University of Melbourne
Parkville, Victoria 3010, Australia (until March 2011)}
\email{w3c@komazawa-u.ac.jp, ozawam@unimelb.edu.au (until March 2011)}
%    \thanks will become a 1st page footnote.
%\thanks{The first author was supported in part by NSF Grant \#000000.}

%    General info
\subjclass[2000]{Primary 57M25; Secondary 57Q35}

%\date{January 1, 2005 and, in revised form, June 22, 1994.}

%\dedicatory{This paper is dedicated to the mandatory retirement age of my father.}

\keywords{knot, spatial graph, bridge position, bridge number, representativity}

\begin{abstract}
%Let $G$ be a graph and $f:G\to S^3$ be an embedding of $G$ into $S^3=B_+\cup_{S^2}B_-$.
%We show that if the spatial graph $\Gamma=f(G)$ is trivial in $S^3$, then any bridge diagram $\Gamma_+\cup\Gamma_-$ of $\Gamma$ on $S^2$ can be isotoped in $B_+$ and $B_-$ respectively so that $\Gamma_+\cup\Gamma_-$ has no crossing in $S^2$.
First, we extend Otal's result for the trivial knot to trivial spatial graphs, namely, we show that for any bridge tangle decomposing sphere $S^2$ for a trivial spatial graph $\Gamma$, there exists a 2-sphere $F$ such that $F$ contains $\Gamma$ and $F$ intersects $S^2$ in a single loop.

Next, we introduce two invariants for spatial graphs.
As a generalization of the bridge number for knots, we define the {\em bridge string number} $bs(\Gamma)$ of a spatial graph $\Gamma$ as the minimal number of $|\Gamma\cap S^2|$ for all bridge tangle decomposing sphere $S^2$.
As a spatial version of the representativity for a graph embedded in a surface, we define the {\em representativity} of a non-trivial spatial graph $\Gamma$ as
\[
r(\Gamma)=\max_{F\in\mathcal{F}} \min_{D\in\mathcal{D}_F} |\partial D\cap \Gamma|,
\]
where $\mathcal{F}$ is the set of all closed surfaces containing $\Gamma$ and $\mathcal{D}_F$ is the set of all compressing disks for $F$ in $S^3$.
Then we show that for a non-trivial spatial graph $\Gamma$,
\[
\displaystyle r(\Gamma)\le \frac{bs(\Gamma)}{2}.
\]
In particular, if $\Gamma$ is a knot, then $r(\Gamma)\le b(\Gamma)$, where $b(\Gamma)$ denotes the bridge number.
This generalizes Schubert's result on torus knots.

%Finally, we give an example of non-torus links with arbitrarily high representativity and by using the representativity, we determine the bridge number of the link.
\end{abstract}

\maketitle
\tableofcontents

\section{Introduction}

Knots, links and spatial graphs are $1$-dimensional objects which are embedded in the $3$-dimensional space.
$2$-dimensional objects between them, namely surfaces, frequently extract some useful information.
In this paper, we concern with closed surfaces which contains these $1$-dimensional objects.

In the following subsections \ref{sub1} and \ref{sub2}, we study bridge positions of trivial and non-trivial spatial graphs respectively.
In subsection \ref{sub3}, we state some results on the representativity of knots.
We will show the existence of spatial graphs with high representativity in subsection \ref{sub4}, and obtain some results on them in subsection \ref{sub5}.
Finally, we propose the strong spatial embedding conjecture in subsection \ref{sub6}.

\subsection{Bridge position of trivial spatial graphs}\label{sub1}

In \cite{O}, Otal proved that any two $n$-bridge presentations of the trivial knot are isotopic.
This shows that if the trivial knot $K$ has a trivial tangle decomposition $(S^3,K)=(B_+,K_+)\cup_{S^2}(B_-,K_-)$, then there exists a 2-sphere $F$ such that $F$ contains $K$ and $F$ intersects $S^2$ in a single loop.
Thus $K_+$ and $K_-$ can be isotoped into $S^2$ in $B_+$ and $B_-$ respectively so that $\text{int}K_+\cap \text{int}K_-=\emptyset$, and hence we have a trivial diagram of $K$ on $S^2$ with fixing $K\cap S^2$.
It is worth to notice that an isotopy of unknotting $K$ can be decomposed into two isotopies of unknotting $K_+$ and $K_-$ by any bridge tangle decomposing sphere $S^2$.
In Theorem \ref{main}, we extend this phenomenon to the case of trivial spatial graphs.

Let $G$ be a graph and $f:G\to S^3$ be an embedding of $G$ into $S^3$.
We call the image $\Gamma=f(G)$ a {\em spatial graph} of $G$.
A spatial graph $\Gamma$ is {\em trivial} if there exists a 2-sphere which contains $\Gamma$.
%Thus the graph $G$ should be a planar graph.
Let $S^2$ be a 2-sphere which divides $S^3$ into two 3-balls $B_+$ and $B_-$, and suppose that $\Gamma$ intersects $S^2$ transversely in the interior of edges.
Put $\Gamma_{\pm}=\Gamma\cap B_{\pm}$.
Then we say that $(B_+,\Gamma_+)\cup_{S^2}(B_-,\Gamma_-)$ is a {\em bridge tangle decomposition} of a pair $(S^3,\Gamma)$ and $S^2$ is a {\em bridge tangle decomposing sphere} for $\Gamma$ if there exists a disk $D_{\pm}$ in $B_{\pm}$ containing $\Gamma_{\pm}$ and $\Gamma_{\pm}$ consists of trees or arcs.

%A diagram $\tilde{\Gamma}$ of $\Gamma$ on $S^2$ is said to be a {\em bridge diagram} if $\tilde{\Gamma}$ is decomposed into $\tilde{\Gamma}_+$ and $\tilde{\Gamma}_-$ by dividing at some points in edges of $\Gamma$ such that each component of $\tilde{\Gamma}_{\pm}$ is a tree, $\tilde{\Gamma}_{\pm}$ has no crossing, and every over (resp. under) crossing is contained in $\tilde{\Gamma}_+$ (resp. $\tilde{\Gamma}_-$).
%Then, we have a {\em bridge tangle decomposition} $(B_+,\Gamma_+)\cup_{S^2}(B_-,\Gamma_-)$ of a pair $(S^3,\Gamma)$ by pushing $\text{int}\Gamma_{\pm}$ into $\text{int}B_{\pm}$ slightly.
%Conversely, we can obtain a bridge diagram $\tilde{\Gamma}$ from a bridge tangle decomposition $(B_+,\Gamma_+)\cup_{S^2}(B_-,\Gamma_-)$.

We remark that if a pair $(S^3,\Gamma)$ has a bridge tangle decomposition $(B_+,\Gamma_+)\cup_{S^2}(B_-,\Gamma_-)$ with a bridge tangle decomposing sphere $S^2$, then it has a {\em bridge position} with respect to the standard height function $h:S^3\to \Bbb{R}$, namely, $h^{-1}([0,\pm\infty))=B_{\pm}$, $h^{-1}(\{0\})=S^2$ and $\Gamma_{\pm}$ consists of ``monotone trees" (each edge has no critical point) or arcs with only one maximal or minimal point with respect to $h$.
Conversely, it follows that if a spatial graph $\Gamma$ has a bridge position, then it has a corresponding bridge tangle decomposition.

\begin{theorem}\label{main}
%Let $\Gamma$ be a trivial spatial graph of a planar graph $G$, and $(B_+,\Gamma_+)\cup_{S^2}(B_-,\Gamma_-)$ be a bridge tangle decomposition of $(S^3,\Gamma)$.
For any bridge tangle decomposing sphere $S^2$ for a trivial spatial graph $\Gamma$, there exists a 2-sphere $F$ such that $F$ contains $\Gamma$ and $F$ intersects $S^2$ in a single loop.
\end{theorem}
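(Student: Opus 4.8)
The plan is to mimic Otal's argument for the trivial knot, carrying it through the extra structure coming from the vertices of the graph. Since $\Gamma$ is trivial, fix a 2-sphere $F_0$ containing $\Gamma$, and isotope $F_0$ so that it meets $S^2$ transversely. The goal is to reduce $|F_0\cap S^2|$ to a single loop by a sequence of modifications of $F_0$ (always keeping $\Gamma\subset F_0$). First I would dispose of the loops of $F_0\cap S^2$ that are inessential in $F_0\setminus\Gamma$: an innermost such loop bounds a disk $\delta\subset F_0$ with $\delta\cap\Gamma=\emptyset$, and $\delta$ is a compressing disk (or trivial disk) for $S^2$ in one of the $B_\pm$; surgering $F_0$ along $\delta$, or isotoping $\delta$ across $S^2$, removes that loop without disturbing $\Gamma$, and does not create new intersections. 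Iterating, I may assume every loop of $F_0\cap S^2$ is essential in the planar-with-punctures surface $F_0\setminus\Gamma$ — equivalently, each complementary disk in $F_0$ cut off by an innermost loop contains at least one point of $\Gamma\cap S^2$ (in fact, using that each vertex and each bridge arc lies on one side, at least one puncture coming from $\Gamma_+$ or $\Gamma_-$).

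Next, the core step: I want to show that if $F_0\cap S^2$ has more than one loop then some loop can be removed. Consider the curves of $F_0\cap S^2$ together with the arcs $\Gamma\cap F_0 = \Gamma$ itself; they cut $F_0$ into regions. Look at $B_+$: the disk $D_+$ carrying $\Gamma_+$ lets me see $\Gamma_+$ as an unknotted tangle, and $F_0\cap B_+$ is a collection of planar surfaces whose boundary circles lie on $S^2$ and which together contain $\Gamma_+$. An outermost region on $F_0$ — a disk $\delta\subset F_0$ cut off by a single loop $\ell$ of $F_0\cap S^2$ and meeting $\Gamma$ only in sub-arcs of $\Gamma_+$ (or of $\Gamma_-$) incident to $\ell$ — gives a disk in $B_+$ whose boundary is $\ell\cup(\text{arcs in }S^2)$ after resolving, and which can be used to isotope the corresponding piece of $\Gamma_+$ and of $F_0$ across $S^2$. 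The point is that triviality of the tangle $(B_+,\Gamma_+)$ — the existence of $D_+$ — guarantees that such an outermost disk can be chosen to be a genuine $\partial$-compressing disk for $F_0\cap B_+$ that is ``parallel'' into $S^2$, and the corresponding isotopy of $F_0$ pushes $\ell$ across $S^2$, strictly decreasing $|F_0\cap S^2|$ while keeping $\Gamma$ on the sphere. Here one must be careful that the vertices of $\Gamma$ do not obstruct the isotopy; since each vertex lies in the interior of some $B_\pm$ and the trees $\Gamma_\pm$ are contained in a single disk $D_\pm$, a vertex encountered along the way can be slid together with its incident edge-germs, exactly as in the monotone-tree picture of the bridge position.

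I expect the main obstacle to be precisely this last point — organizing the outermost-disk/$\partial$-compression argument so that the isotopy of $F_0$ respects $\Gamma$ and, in particular, does not push one piece of $\Gamma$ through another (which would be fine for $F_0$ as a surface but must be shown to be avoidable here), and handling the vertices so that the induction on $|F_0\cap S^2|$ genuinely terminates. A clean way to manage this is to run the whole argument with the height function: put $\Gamma$ in the bridge position associated to $S^2$, take $F_0$ to also be in a ``thin'' position relative to $h$ with $\Gamma\subset F_0$, and use a standard innermost-loop/outermost-arc exchange between $F_0$ and the level sphere $S^2$, appealing to the unknottedness of $\Gamma_\pm$ in $B_\pm$ to kill the essential loops one at a time. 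Once $|F_0\cap S^2|=1$ is achieved, set $F=F_0$ and the theorem follows.
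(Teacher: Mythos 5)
Your strategy---reduce $|F_0\cap S^2|$ by innermost-loop/outermost-disk exchanges, invoking the disks $D_\pm$---is the natural first attempt, but the decisive step is exactly the one you flag as ``the main obstacle,'' and as described it does not go through. Two concrete problems. First, your core move isotopes ``the corresponding piece of $\Gamma_+$ and of $F_0$ across $S^2$.'' Any isotopy that pushes part of $\Gamma$ through $S^2$ changes the given bridge tangle decomposition (it lowers $|\Gamma\cap S^2|$), so at best you obtain a sphere containing an isotoped copy of $\Gamma$ meeting $S^2$ in one loop; undoing the isotopy drags $S^2$ along, and the conclusion no longer concerns the originally given decomposing sphere. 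All modifications must keep $\Gamma$ fixed (or at least keep $\Gamma_\pm\subset B_\pm$ and preserve $S^2$ setwise); this constraint is what makes the theorem nontrivial. Second, the claim that the existence of $D_+$ ``guarantees'' that an outermost disk of $F_0$ cut off by a single loop of $F_0\cap S^2$ can be chosen to be a genuine $\partial$-compressing disk parallel into $S^2$ is precisely the hard point and is nowhere argued: the outermost piece of $F_0\cap B_+$ may wind among the other sheets of $F_0$ and the other strands of $\Gamma_+$, and triviality of the tangle $(B_+,\Gamma_+)$ alone does not produce the required parallelism. There are also unaddressed technical issues in your preliminary step: the points of $\Gamma\cap S^2$ lie \emph{on} the curves of $F_0\cap S^2$ (since $\Gamma\subset F_0$), so innermost disks are not automatically disjoint from $\Gamma$, and surgery can disconnect $F_0$, which is not allowed if both pieces meet $\Gamma$.

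For comparison, the paper avoids curve exchanges with $S^2$ altogether. By Lemma \ref{lemma1} the pair $(F,\Gamma)$ is put in a Morse bridge position adapted to the given decomposition (with $S^2=h^{-1}(0)$ and a product region $M_0$), and Lemma \ref{lemma2} cancels inessential saddle points by isotopies that never push $\Gamma$ across $S^2$: a shrink-and-slide along a monotone arc in $F-\Gamma$ when the saddle lies above $S^2$, and a vertical push using the product structure of $M_0$ with $\Gamma$ fixed otherwise. Since every saddle of a $2$-sphere is inessential, $F$ is isotoped to have exactly two critical points, whence $F\cap S^2$ is a single loop. Your closing suggestion to ``run the whole argument with the height function'' points toward this, but the saddle-cancellation lemma---which is the actual content of the proof---is missing from your proposal, so as it stands the argument has a genuine gap.
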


%\begin{corollary}
%Let $\Gamma$ be a trivial spatial graph of a planar graph $G$, and $\Gamma_+\cup \Gamma_-$ be a bridge diagram of $\Gamma$.
%Then there exist ambient isotopies of $\text{int}B_+$ and $\text{int}B_-$ such that $\Gamma_+\cup \Gamma_-$ has no crossing on $S^2$.
%\end{corollary}

\subsection{Bridge position and the representativity of non-trivial spatial graphs}\label{sub2}

We define the {\em bridge string number} $bs(\Gamma)$ of a spatial graph $\Gamma$ as the minimal number of $|\Gamma\cap S^2|$ for all bridge tangle decompositions $(B_+,\Gamma_+)\cup_{S^2}(B_-,\Gamma_-)$, namely,
\[
bs(\Gamma)=\min_{\Gamma\in \mathcal{BP}_{\Gamma}} |\Gamma\cap S|,
\]
where $\mathcal{BP}_{\Gamma}$ is the set of all bridge position of $\Gamma$.
When $\Gamma$ is a knot, the bridge string number $bs(K)$ is equal to the twice of the bridge number $b(K)$ introduced in \cite{S}.
For spatial graphs of a $\theta$-curve graph, the bridge number $b(\Gamma)$ is also defined in \cite{G} or \cite{M} and $bs(\Gamma)\le 2b(\Gamma)+1$ holds for a spatial graph $\Gamma$ of a $\theta$-curve.
%Hence the bridge string number can be considered as a generalization of the bridge number.

Let $\Gamma$ be a non-trivial spatial graph and $F$ a closed surface containing $\Gamma$.
Following \cite{MO1}, the {\em representativity} $r(F,\Gamma)$ of a pair $(F,\Gamma)$ is defined as the minimal number of intersecting points of $\Gamma$ and $\partial D$, where $D$ ranges over all compressing disks for $F$ in $S^3$, namely,
\[
\displaystyle r(F,\Gamma)=\min_{D\in\mathcal{D}_F} |\partial D\cap \Gamma|,
\]
where $\mathcal{D}_F$ is the set of all compressing disks for $F$ in $S^3$.
Here, $\partial D$ may run over some vertices of $\Gamma$.

%We note that there exists at least one compressing disk for $F$ in $S^3$ since $S^3$ does not contain closed incompressible surfaces. 
%This definition is a spatial version of the representativity for a graph embedded in a surface (\cite{RV}).
Furthermore, we define the {\em representativity} $r(\Gamma)$ of a non-trivial spatial graph $\Gamma$ as the maximal number of $r(F,\Gamma)$ over all closed surfaces $F$ containing $\Gamma$, namely,
\[
r(\Gamma)=\max_{F\in\mathcal{F}} r(F,\Gamma),
\]
where $\mathcal{F}$ is the set of all closed surfaces of positive genus containing $\Gamma$.
We note that $r(\Gamma)\ge 1$ for any non-trivial spatial graph $\Gamma$.
The representativity of a knot measures how many times the knot can be wrapped around a closed surface, and the representativity of a spatial graph can be considered as a spatial version of the representativity for a graph embedded in a surface (\cite{RV}).

%\begin{example}\label{Seifert}
%For any non-trivial knot $K$, we have $r(K)\ge 2$.
%Indeed, $r(F,K)=2$ for $F=\partial N(S)$, where $S$ is a minimal genus Seifert surface for $K$.
%\end{example}

%Theorem \ref{main2} shows that a knot (resp. a spatial graph) with high representativity has also high bridge number (resp. bridge string number).
%This has been known for a torus knot $K$ of type $(p,q)$ as $\min\{p,q\}\le b(K)$ which was proved in \cite{S} or \cite{Sch}.

The following is a main theorem which states the title of this paper.

\begin{theorem}\label{main2}
For a non-trivial spatial graph $\Gamma$,
\[
\displaystyle r(\Gamma)\le \frac{bs(\Gamma)}{2}.
\]
%The equality holds if and only if 
\end{theorem}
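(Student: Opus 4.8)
The plan is to prove the following equivalent statement: for every closed surface $F$ of positive genus containing $\Gamma$ and every bridge tangle decomposing sphere $S$ for $\Gamma$, there is a compressing disk $D$ for $F$ with $|\partial D\cap\Gamma|\le\frac12|\Gamma\cap S|$; applying this to a sphere $S$ realizing $bs(\Gamma)$ then gives $r(F,\Gamma)\le bs(\Gamma)/2$ for every $F$, hence $r(\Gamma)\le bs(\Gamma)/2$. I would argue by induction on $N:=|\Gamma\cap S|$, handling the base cases and the case of planar-tree components of $\Gamma$ directly. So fix $F$ and $S$, write $S^3=B_+\cup_S B_-$ with spanning disks $D_\pm\subset B_\pm$ containing the monotone forests $\Gamma_\pm:=\Gamma\cap B_\pm$, and isotope $F$ so that $|F\cap S|$ is minimal; then $F\cap S$ is a disjoint union of simple closed curves, and the $N$ points of $\Gamma\cap S$ lie on some of them.

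The engine is a destabilization move: if a curve $c$ of $F\cap S$ bounds a disk $E$ in $F$ such that $E\cap\Gamma$ is a forest, then $E$ together with a suitable disk $\delta\subset S$ with $\partial\delta=c$ cobounds a ball $W$ with $\Gamma\cap W=\Gamma\cap E$, and pushing $F$ across $W$ (dragging $\Gamma\cap E$ along) removes $c$ from $F\cap S$ and lowers $|\Gamma\cap S|$ by $|c\cap\Gamma|$; after reassembling the pushed forest with $\Gamma_\mp$ inside a disk and re-monotonizing — exactly the kind of manipulation behind Theorem \ref{main} — $\Gamma$ is again in bridge position. If $|c\cap\Gamma|>0$ we finish by the inductive hypothesis, and if $|c\cap\Gamma|=0$ this contradicts the minimality of $|F\cap S|$. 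Hence after minimization: every curve of $F\cap S$ disjoint from $\Gamma$ is essential in $F$, and if some curve of $F\cap S$ is disjoint from $\Gamma$ then, taking it innermost on $S$, pushing the corresponding subdisk of $S$ slightly off $S$ and using the destabilization move to clear the remaining intersections with $F$ (all occurring away from $\Gamma$, so harmlessly) produces a compressing disk for $F$ disjoint from $\Gamma$, so $r(F,\Gamma)=0$ and we are done. We may therefore assume $F\cap S=c_1\cup\dots\cup c_m$ with each $c_i$ essential in $F$ and carrying $k_i:=|c_i\cap\Gamma|\ge1$, where $\sum_i k_i=N$ (if some $c_i$ were inessential in $F$, the destabilization move would lower $N$ and we would be done by induction).

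If $m\ge2$, a short combinatorial observation about disjoint circles in $S^2$ shows there are at least two curves of $F\cap S$ that are innermost on $S$; among any two of them, one — say $c$ — has $k\le N/2$ because the $k_i$ sum to $N$. Pushing the innermost-on-$S$ subdisk bounded by $c$ off $S$ (keeping its boundary $c$, so it meets $\Gamma$ in exactly the $k$ points $c\cap\Gamma$) and clearing the resulting interior intersections with $F$ by destabilization moves supported in $B_\pm$ away from $S$ (which therefore do not change $|\partial D\cap\Gamma|$) yields a compressing disk $D$ for $F$ with $|\partial D\cap\Gamma|\le N/2$. There remains the case $m=1$: $F\cap S$ is a single curve $c$, essential in $F$, carrying all $N$ points of $\Gamma\cap S$. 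Then $c$ splits $F$ into $F_\pm:=F\cap B_\pm$; since $g(F)>0$ one of them, say $F_+$, has positive genus and hence is compressible inside the ball $B_+$, and one may isotope the other side toward $S$ (a properly embedded disk in a ball is unique rel boundary) so that $\Gamma_-$ is concentrated near $S$. The task becomes: find a compressing disk $D_0$ for $F_+$ in $B_+$ with $\partial D_0\subset\text{int}\,F_+$ — such a disk is automatically essential in $F$, disjoint from $F_-$ (hence from $\Gamma_-$), and a compressing disk for $F$ — with $|\partial D_0\cap\Gamma_+|\le N/2$, where $\Gamma_+$ is a monotone forest having $N$ free ends on $\partial F_+=c$ and lying on the spanning disk $D_+$.

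This last step is the main obstacle, and it is where the generalization of Schubert's torus-knot argument sits: one must use not merely that $F_+$ is compressible in a ball, but that $\Gamma_+$, being a forest lying on the planar disk $D_+$ (so unknotted in $B_+$, with its $N$ ends on $c$ in a non-crossing cyclic pattern), cannot be so intricately wound on $F_+$ that every compressing curve meets it more than $N/2$ times — heuristically, winding $\Gamma_+$ around a handle of $F_+$ that many times would force $bs(\Gamma)>N$, contradicting that $S$ is a bridge sphere. Making this quantitative — comparing the spanning disk $D_+$ with a compressing disk for $F_+$ and running innermost-disk and outermost-arc exchanges to extract a compressing curve hitting $\Gamma_+$ at most $\lfloor N/2\rfloor$ times — is the heart of the proof; the surrounding scaffolding is the minimization and destabilization bookkeeping above, together with the care needed to keep $\Gamma$ in bridge position throughout those moves, for which Theorem \ref{main} and its proof technique are the natural tool.
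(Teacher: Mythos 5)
Your overall counting trick (two disjoint curves on a sphere share at most $N$ points of $\Gamma$, so one meets $\Gamma$ at most $N/2$ times, and an innermost such curve that is essential in $F$ bounds a compressing disk) is exactly the engine of the paper's proof. But the decisive input that makes that trick apply is missing from your argument, and you say so yourself: your case $m=1$, where the sphere meets $F$ in a single essential curve carrying all $N$ points, is left as ``the main obstacle'' with only a heuristic about winding forcing $bs(\Gamma)>N$. That case is precisely what a naive minimization of $|F\cap S|$ cannot rule out, and your proposed remedy (finding a compressing disk for $F_+$ in $B_+$ meeting the monotone forest $\Gamma_+$ at most $N/2$ times by comparing with the spanning disk $D_+$) is not carried out; it is the entire content of the theorem in that configuration. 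The paper avoids this case altogether: it does not work with the bridge sphere $S$ itself, but puts the pair $(F,\Gamma)$ into a Morse bridge position (Lemma \ref{lemma1}) and cancels all inessential saddle points (Lemma \ref{lemma2}); since $g(F)>0$ there is then an essential saddle, so some regular level sphere in the product region $M_0$ meets $F$ in at least two essential curves while still meeting $\Gamma$ in exactly $bs(\Gamma)$ points (condition (6) of the Morse bridge position). With two disjoint essential curves available, the half-counting argument plus innermost-disk surgery finishes immediately. In other words, the paper replaces your $m=1$ obstacle by a structural statement about saddle points, proved once and for all in Lemma \ref{lemma2}.

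There is also a problem earlier in your setup. The ``destabilization move'' is not justified: if $c\subset F\cap S$ bounds a disk $E\subset F$, the sphere $E\cup\delta$ need not bound a ball $W$ with $\Gamma\cap W=\Gamma\cap E$ (the interior of $E$ may meet $S$ in further curves, and other parts of $\Gamma\subset F$ may lie inside $W$ or cross $\delta$), and pushing $F$ across $W$ need not return $\Gamma$ to a bridge position with respect to $S$, nor keep $|\Gamma\cap S|$ under control -- yet your induction and your final bound both require the count against the \emph{original} $N$. Keeping the bridge structure intact while simplifying $F$ is exactly what the Morse bridge position conditions (5)--(7) are designed to guarantee, and it is why the paper's isotopies (cancelling an inessential saddle by sliding along a monotone arc, or pushing a saddle across the product region $M_0$) are legitimate while your ad hoc push across $W$ is not. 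So the proposal as written has a genuine gap at its core step and an unsupported move in its reduction; to repair it you would essentially need to import Lemmas \ref{lemma1} and \ref{lemma2} and argue at a level sphere rather than at $S$.
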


Theorem \ref{main2} states the representativity takes a finite value, and as in Section \ref{determination}, the representativity can be used to give a lower bound for the bridge string number.

\subsection{The representativity of knots}\label{sub3}
%%% new %%%
We summarize what are known about the representativity of knots.

\begin{theorem}[\cite{MO4}]\label{knots}
We have 
\begin{enumerate}
\item $2\le r(K)\le b(K)$ for a non-trivial knot $K$.
\item $r(K)=\min \{p, q\}$ for a $(p,q)$-torus knot $K$ $(p,q>0)$.
\item $r(K)=2$ for a $2$-bridge knot $K$.
\item $r(K)\le 3$ for an algebraic knot $K$.
\item $r(K)=3$ for a $(p,q,r)$-pretzel knot $K$ if and only if $(p,q,r)=\pm (-2,3,3)$ or $\pm(-2,3,5)$.
\end{enumerate}
\end{theorem}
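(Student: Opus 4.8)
The plan is to establish the five assertions separately, deriving the upper bounds from Theorem~\ref{main2} together with Schubert's bridge-number computations, and the lower bounds from explicit surfaces on which $K$ lies or from a reduction argument. For (1), the inequality $r(K)\le b(K)$ is immediate from Theorem~\ref{main2}: since $K$ is a knot we have $bs(K)=2b(K)$, so $r(K)\le bs(K)/2=b(K)$. For the lower bound $r(K)\ge 2$ I would argue by contradiction. If $r(K)\le 1$, then every closed surface $F\supset K$ carries a compressing disk meeting $K$ in at most one point. Beginning with any such $F$ and compressing along disks disjoint from $K$, which reduce the genus while keeping $K$ embedded on the resulting surface, and simplifying along disks meeting $K$ once by isotoping $K$ across the disk, one would drive the genus of $F$ to zero and place $K$ on a $2$-sphere, forcing $K$ to be trivial. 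The delicate step is to make the single-intersection move genuinely reduce complexity; here an Otal-type argument in the spirit of Theorem~\ref{main} is the natural tool.

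For (2), place the $(p,q)$-torus knot on the standard unknotted torus $T$, which bounds solid tori $V_1$ and $V_2$ on its two sides. Every compressing disk for $T$ in $S^3$ has boundary an essential curve bounding a disk in $V_1$ or in $V_2$, hence equal to the meridian slope of $V_1$ or of $V_2$; these meet the $(p,q)$-curve in $\abs{q}$ and $\abs{p}$ points, so $r(T,K)=\min\{p,q\}$ and therefore $r(K)\ge\min\{p,q\}$. Schubert's formula $b(K)=\min\{p,q\}$ combined with (1) gives $r(K)\le b(K)=\min\{p,q\}$, whence equality. For (3), a $2$-bridge knot has $b(K)=2$, so (1) forces both $r(K)\le 2$ and $r(K)\ge 2$, hence $r(K)=2$.

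For (4), let $K$ be algebraic, presented by an arborescent decomposition along a system of Conway spheres, each meeting $K$ in four points and cutting it into rational tangles. Given an arbitrary closed surface $F\supset K$, the goal is a compressing disk meeting $K$ in at most three points. I would isotope $F$ into general position with respect to the Conway spheres, remove inessential intersection curves and arcs by innermost-disk and outermost-arc exchanges, and then exploit the rational-tangle structure: a rational tangle carries an obvious disk separating its two strands that meets $K$ in two points, to which the surrounding Conway-sphere structure contributes at most one further intersection. Controlling the trace of $F$ on the Conway spheres is the technical heart of this step.

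For (5), a $(p,q,r)$-pretzel knot is Montesinos and hence algebraic, so (4) yields $r(K)\le 3$ at once. The two exceptional cases are distinguished because the $(-2,3,3)$- and $(-2,3,5)$-pretzel knots are precisely the torus knots $T(3,4)$ and $T(3,5)$; for these (2) gives $r(K)=\min\{3,4\}=3$ and $r(K)=\min\{3,5\}=3$. It remains to prove that every other $(p,q,r)$-pretzel knot satisfies $r(K)\le 2$, and hence $r(K)=2$ by (1). This is the genuinely hard direction: one must show that for each non-exceptional pretzel knot and each closed surface $F\supset K$ there is a compressing disk meeting $K$ in at most two points, which demands a refined analysis of how $F$ can lie relative to the three twist regions and the characteristic Conway spheres, excluding the representativity-$3$ configurations that survive only for the two torus-knot pretzels. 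I expect (4), and especially the classification in (5), to be the main obstacles, since both rest on controlling an arbitrary surface $F$ against the Conway-sphere decomposition and extracting from it a compressing disk of small intersection with $K$.
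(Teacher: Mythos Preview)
Your treatment of (2), (3), and the upper bound in (1) matches the paper exactly: $bs(K)=2b(K)$ together with Theorem~\ref{main2} gives $r(K)\le b(K)$, and the torus and $2$-bridge cases follow by sandwiching.

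For the lower bound in (1), the paper takes a much shorter route than your compression-and-reduce scheme. It simply exhibits one surface on which every non-trivial knot has representativity $2$: set $F=\partial N(S)$ for a minimal genus Seifert surface $S$ of $K$. Then $K\subset F$, the meridian disk of $N(K)\subset N(S)$ meets $K$ twice, and no compressing disk can meet $K$ fewer times (a boundary disjoint from $K$ would lie in one copy of $\operatorname{int}S\subset F\setminus K$ and would contradict incompressibility of $S$; a single intersection would force $K$ to bound a disk). This gives $r(F,K)=2$ and hence $r(K)\ge 2$ at once. Your reduction argument, by contrast, leaves the single-intersection move unfinished; you flag it yourself as ``delicate'' and appeal to an Otal-type mechanism that is not actually set up here. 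So this part of your proposal has a genuine gap that the Seifert-surface trick avoids entirely.

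For (4) and (5), the paper does not argue at all: it defers both statements to \cite{MO4}, where they are the main theorems. Your sketches point in the right direction (Conway spheres, rational-tangle structure, recognizing $(-2,3,3)$ and $(-2,3,5)$ as the torus knots $T(3,4)$ and $T(3,5)$ for the ``if'' half of (5)), and you are honest that the ``only if'' direction of (5) and the control of $F$ against the Conway spheres in (4) are the real work. But as written these are outlines, not proofs; in particular, the step ``to which the surrounding Conway-sphere structure contributes at most one further intersection'' in (4) is not justified, and the exclusion of representativity-$3$ configurations for non-exceptional pretzels in (5) is only asserted. If you want self-contained arguments here you will have to reproduce a substantial part of \cite{MO4}.
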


%%% new %%%

\begin{proof}
(1) We have $r(F,K)=2$ for a closed surface $F=\partial N(S)$, where $S$ is a minimal genus Seifert surface for $K$, and hence we have $r(F,K)=2 \le r(K)$.
The latter inequality $r(K)\le b(K)$ follows from Theorem \ref{main2}.

(2) When $K$ is a torus knot of type $(p,q)$ and $F$ is an unknotted torus containing $K$, it follows from Theorem \ref{main2} that $\min\{p,q\}=r(F,K)\le r(K)\le b(K)$.
We remark that this gives an equality $b(K)=\min\{p,q\}$ as proved in \cite{S} and \cite{Sch}.

(3) For any 2-bridge knot $K$, it follows from Theorem \ref{main2} that $2\le r(K)\le b(K)=2$.
Hence $r(K)=2$.

(4) and (5) are main theorems proved in \cite{MO4}.
\end{proof}

%\begin{example}
%For any alternating knot $K$, we have $r(F,K)=2$ for a closed surface $F=\partial N(S)$, where $S$ is a checkerboard surface of a reduced alternating diagram of $K$.
%See \cite[Theorem 2]{MO1} for a detailed explanation.
%This also follows from Lemma \ref{n-connected}.
%It is unknown whether there exists an alternating knot $K$ with $r(K)\ge 3$.
%\end{example}

\begin{conjecture}
It holds $r(K)=2$ for an alternating knot $K$.
\end{conjecture}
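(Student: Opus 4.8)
The plan is to establish the stronger assertion that $r(F,K)\le 2$ for \emph{every} closed surface $F$ of positive genus containing a nontrivial alternating knot $K$; since $r(K)\ge 2$ by Theorem~\ref{knots}(1), this yields $r(K)=2$. Two preliminary reductions come first. If $K=K_{1}\#\cdots\#K_{n}$, cut $F$ along the decomposing spheres, each of which meets $K$ in two points lying on $F$: after an innermost-circle isotopy arranging each such sphere to meet $F$ in a single circle through those two points, a compressing disk for one of the resulting summand surfaces that meets $K_{i}$ in few points can be promoted to one for $F$ meeting $K$ in the same number, which gives $r(K_{1}\#\cdots\#K_{n})\le\max_{i}r(K_{i})$ since each $r(K_{i})\ge 2$. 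Hence it suffices to treat \emph{prime} alternating $K$; and by Menasco's theorem together with Thurston's hyperbolization, a prime alternating knot is either a $(2,n)$-torus knot, already covered by Theorem~\ref{knots}(2), or hyperbolic. So from now on $K$ is a prime hyperbolic alternating knot equipped with a fixed reduced alternating diagram on a projection sphere $P\subset S^{3}$.

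Suppose for contradiction that $F$ realizes $r(K)=r(F,K)\ge 3$, and put $F^{*}=F\setminus\mathrm{int}\,N(K)$, a surface properly embedded in the exterior $E(K)=S^{3}\setminus\mathrm{int}\,N(K)$ whose two boundary curves share the slope $\gamma$ that $F$ induces on $K$. Because $\partial N(K)$ is incompressible in $E(K)$, a compressing disk for $F^{*}$ caps off to a compressing disk for $F$ disjoint from $K$, and a $\partial$-compressing disk for $F^{*}$ caps off across $N(K)$ to a compressing disk for $F$ meeting $K$ in at most two points; as $r(F,K)\ge 3$ forbids both, $F^{*}$ must be essential in $E(K)$. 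When $g(F)=1$ this is already contradictory: $\chi(F^{*})=\chi(F)=0$, so $F^{*}$ would be an essential annulus in the hyperbolic manifold $E(K)$. Therefore $g(F)\ge 2$ and $F^{*}$ is an essential surface of negative Euler characteristic having two boundary curves of the single slope $\gamma$.

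The heart of the proof is to study such an $F^{*}$ with Menasco's normal-form machinery for essential surfaces in alternating link exteriors: isotope $F^{*}$ so that it meets $P$ in circles essential in $P\setminus K$ together with standard saddles inside the crossing balls, with no trivial or removable pieces, and examine the portions of $F^{*}$ above and below $P$. Using that the diagram is reduced, prime and alternating, one wants to show that every such configuration either cannot occur or exhibits --- at an outermost saddle, at an innermost circle of $F^{*}\cap P$, or along a product region bounded by two parallel intersection circles --- a compressing or $\partial$-compressing disk for $F^{*}$, contradicting essentiality. This forces $r(F,K)\le 2$, and with the first paragraph, $r(K)=2$.

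The genuine obstacle is this final combinatorial step. Menasco's arguments run most smoothly for surfaces with meridional or empty boundary, whereas $\partial F^{*}$ carries the surface slope $\gamma$, which for alternating knots can be an arbitrary even integer; keeping track of how $\partial F^{*}$ threads the crossing balls and extracting a compression from the resulting saddle pattern is exactly where the analysis may fail to close, which is presumably why the statement remains a conjecture. Two side issues also demand care: the claim that a $\partial$-compression of $F^{*}$ produces a compressing disk for $F$ meeting $K$ in \emph{at most two} rather than more points, which reduces to a short case analysis of the slope of the loop formed by the $\partial$-compressing arc and an arc of $\partial F^{*}$ on $\partial N(K)$; and the connected-sum reduction, whose auxiliary inequality needs its own innermost-circle argument.
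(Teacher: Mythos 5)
This statement is a \emph{conjecture} in the paper --- no proof of it is given there --- and your proposal does not close it either: you yourself flag that the decisive step is left open. The preliminary reductions are the sound part of the sketch. The composite case does not even need your cut-and-paste argument, since the paper's Corollary to Theorem~\ref{spatially} already gives $r(K)=2$ for composite knots; and the passage from $r(F,K)\ge 3$ to ``$F^{*}=F\cap E(K)$ is essential'' is reasonable, modulo the slope bookkeeping for the $\partial$-compression that you mention, as is the genus-one case via hyperbolicity.

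The genuine gap is the final step, and it is not merely an unfinished computation: the contradiction you aim for --- that no essential $F^{*}$ with two boundary curves of a common integral slope exists, to be extracted from Menasco-style normal form \cite{Me} --- is the wrong target. Alternating knot exteriors contain many essential surfaces with non-meridional boundary slope (Seifert surfaces, checkerboard surfaces, and for many alternating knots surfaces with two boundary components of the same integral slope), and Menasco's machinery is built for closed surfaces or surfaces with meridional boundary; essentiality of $F^{*}$ alone cannot be contradicted. What the conjecture actually requires is to show that any essential $F^{*}$ of this type, because it closes up to a surface $F$ \emph{containing} $K$, still admits a compressing disk for $F$ that is allowed to cross $K$ but does so in at most two points; producing such a disk is a statement about disks meeting the knot, which the saddle combinatorics of $F^{*}$ in normal form does not by itself yield. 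That missing idea is exactly why the statement remains open; what you have written is a plausible research programme, not a proof.
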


\begin{problem}
Characterize knots having $r(K)=b(K)$.
\end{problem}

%%%%%%%%%%%%%%%%%%%%%%%%%%%%%%%%%%%%%%%%%%%%%%%
%\begin{remark}\label{equality}
%In Theorem \ref{main2}, if $r(\Gamma)=\lfloor bs(\Gamma)/2 \rfloor \ge 2$ holds, then a closed surface $F$ containing $\Gamma$ which gives $r(F,\Gamma)=r(\Gamma)$ intersects each level 2-sphere $h^{-1}(t)$ in at most two loops for any regular value $t$.
%In this case, $F$ is a ``vertical'' Heegaard surface.
%\end{remark}

%\begin{problem}
%Does it hold $waist(K)\le r(K)$?
%\end{problem}

\subsection{Constructing spatial graphs with arbitrarily high representativity}\label{sub4}

Let $g(G)$ denote the minimal genus of a graph $G$, that is, the minimal genus of orientable closed surfaces containning $G$.
The following theorem says that there exist spatial graphs with arbitrarily high representativity.

\begin{theorem}\label{arbitrarily}
For any closed surface $F$ embedded in $S^3$ with $g(F)\ge g(G)\ge 1$ and for any integer $n$, there exists a spatial graph $\Gamma$ of $G$ contained in $F$ such that $r(F,\Gamma)\ge n$.
\end{theorem}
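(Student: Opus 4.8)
The plan is to realize the desired spatial graph by choosing an embedding of $G$ into $F$ that wraps densely enough around $F$ so that no compressing disk for $F$ can meet $\Gamma$ fewer than $n$ times. First I would fix a cellular embedding of $G$ into a genus $g(G)$ subsurface, or directly into $F$ itself since $g(F)\ge g(G)$; after attaching trivial handles if necessary we may assume $G$ is embedded in $F$ so that the complement $F\setminus\Gamma$ is a union of disks (a cellular embedding), and in particular every essential simple closed curve on $F$ meets $\Gamma$. The point of cellularity is that any compressing disk $D$ for $F$ has $\partial D$ essential on $F$, hence $\partial D$ must cross $\Gamma$; the quantitative statement $|\partial D\cap\Gamma|\ge n$ is what requires work.

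The key mechanism is to compose with a high-power ``twisting'' automorphism of $F$ to inflate all intersection numbers. Concretely, I would take a finite generating set of essential simple closed curves on $F$ and, for each, apply a large power $k$ of the Dehn twist along it; alternatively, use a pseudo-Anosov (or, in the genus-$1$ case, a high-power parabolic) mapping class $\varphi$ of $F$ and set $\Gamma=\varphi^k(\Gamma_0)$ for a fixed cellular $\Gamma_0$ and $k$ large. Since the compressing disks for $F$ in $S^3$ are a fixed finite-complexity family — by Haken's finiteness / the bounded combinatorics of the handlebody on each side, the boundary slopes of compressing disks realize only finitely many isotopy classes of curves on $F$ — geometric intersection number with $\Gamma_0$ along those finitely many slopes is bounded, but applying $\varphi^k$ multiplies the relevant intersection numbers so that $\min_{D} |\varphi^k(\partial D)\cap\varphi^k(\Gamma_0)| = \min_D|\partial D\cap\Gamma_0|$ is \emph{not} what grows; rather, I must keep $F$ fixed and move $\Gamma$, so that $r(F,\Gamma)=\min_D |\partial D\cap \varphi^k(\Gamma_0)|$, and since $\varphi^k$ distorts $\Gamma_0$ relative to the fixed finite set of disk slopes, each such intersection number tends to infinity with $k$. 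Choosing $k$ large enough that every one of the finitely many relevant values exceeds $n$ gives $r(F,\Gamma)\ge n$.

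I expect the main obstacle to be making precise, and honest, the claim that only finitely many isotopy classes of essential curves on $F$ bound compressing disks for $F$ in $S^3$ — a priori $S^3\setminus F$ has two handlebody-or-compressible pieces and the disk complex is infinite, so one cannot literally intersect against a finite list. The fix I would use is to argue in terms of a measured lamination / train track carrying $\Gamma$: after twisting, $\Gamma$ is carried by a train track $\tau$ on $F$ whose carried weights are all at least $n$, and every essential curve either meets every branch of $\tau$ (hence meets $\Gamma$ at least $n$ times) or is disjoint from $\tau$ and therefore inessential on $F$ because $\tau$ fills; the ``fills'' property is exactly what the cellular embedding plus enough twisting buys. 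Thus the real content is (i) upgrading cellularity to ``$\Gamma$ fills $F$'', and (ii) a twisting estimate showing filling can be achieved with all local intersection numbers $\ge n$; the rest is bookkeeping. A secondary technical point is the genus-$1$ case, where $F$ is a torus, pseudo-Anosov maps are unavailable, and one instead uses that a $(p,q)$-torus-knot-like curve on the standard torus already achieves $r(F,\Gamma)=\min\{p,q\}$, handling vertices of $G$ by small local modifications that do not decrease intersection numbers with essential curves.
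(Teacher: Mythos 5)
There is a genuine gap, and it is located exactly where you placed the ``real content'': the train-track fix does not work, and in fact the statement it is designed to prove is false. Your mechanism aims to show that \emph{every} essential simple closed curve on $F$ meets $\Gamma$ at least $n$ times (the claimed dichotomy ``either $c$ meets every branch of $\tau$ or $c$ is disjoint from $\tau$'' is already wrong --- a curve can cross some branches and miss others --- but even the corrected version would bound $|c\cap\Gamma|$ from below for all essential $c$). No embedding of $G$ can satisfy this for large $n$: since $\Gamma$ is an \emph{embedded} copy of the fixed finite graph $G$, take any cycle $C$ of $\Gamma$ that is essential in $F$ (one exists once the embedding is cellular and $g(F)\ge 1$) and push it off to the boundary of a regular neighbourhood; the resulting curve is essential, and it meets $\Gamma$ only in the edge-ends of $G$ incident to vertices of $C$, hence in at most $2|E(G)|$ points, independently of the embedding and of any amount of twisting. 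So any argument that only uses $2$-dimensional data (filling, cellularity, carried weights) and ignores \emph{which} essential curves actually bound compressing disks in $S^3$ cannot prove the theorem; the $3$-dimensional input is indispensable. Your twisting step has a separate problem of the same flavour: if the chosen mapping class $\varphi$ extends to a homeomorphism of one side of $F$ (for instance of a compression body on that side), then $\varphi^{-k}$ permutes that side's disk set and $\min_D|\partial D\cap\varphi^k(\Gamma_0)|=\min_D|\partial D\cap\Gamma_0|$ for all $k$, so nothing grows; choosing $\varphi$ whose dynamics avoid the (infinite) disk sets of both sides is precisely a Hempel-distance type argument that your sketch does not supply. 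Finally, nothing in the proposal addresses knotted $F$, where the two complementary pieces need not be handlebodies and compressions may exist on only one side.

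For comparison, the paper avoids all of this by reducing to a situation where the disk sets can be controlled by hand: it takes the characteristic compression bodies of the two sides (Bonahon), applies Fox's reimbedding theorem so that $F$ becomes a Heegaard surface $F'$ of a new copy of $S^3$ (this loses no compressing disks, since all compressions are captured in the compression bodies), and then on $F'$ it writes down an explicit knot $K'$ as a smoothing of many parallel meridians and longitudes, verifying $r(F',K')=n$ by the cut-and-count criterion of Lemma \ref{n-connected}; a non-separating cycle of $G$ (Lemma \ref{non-separating}) is then mapped onto $K'$ to extend $K'$ to an embedding $\Gamma'$ of $G$, and pulling back the reimbedding gives $r(F,\Gamma)\ge r(F',\Gamma')\ge r(F',K')=n$. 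If you want to salvage your approach, you would need to replace ``every essential curve'' by ``every curve bounding a compressing disk'' throughout and prove a quantitative statement about the interaction of $\varphi^k(\Gamma_0)$ with the two disk sets, which is essentially the content the explicit construction provides for free.
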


%It seems that if $r(F,\Gamma)$ is sufficiently high, then $r(F,\Gamma)=r(\Gamma)$.
%The author would like to propose the next question.

%\begin{problem}
%Does there exist an integer $n_{\Gamma}$ for any spatial graph $\Gamma$ such that if $\Gamma$ is contained in a closed surface $F$ with $r(F,\Gamma)>n$, then $r(\Gamma)=n$.
%\end{problem}

%\subsection{Spatial graphs with low representativity}

%\begin{theorem}
%Let $G$ be a graph with at most two vertices and $\Gamma$ be a spatial graph of $G$.
%Then $r(\Gamma)\le 1$ if and only if $\Gamma$ is trivial.
%\end{theorem}

%\begin{remark}
%There exists a trivial spatial graph $\Gamma$ of a planar graph $G$ such that $r(\Gamma)\ge 2$.
%Let $F$ be an unknotted torus in $S^3$ and $\Gamma$ be a spatial graph embedded in $F$ which is formed from each two parallel copies of the meridian and the longitude.
%Then we have $r(F,\Gamma)=2$ but $\Gamma$ is trivial.
%\end{remark}

\subsection{Some properties derived from the representativity}\label{sub5}
In this subsection, we derive next two properties of spatial graphs from the representativity.

We say that a spatial graph $\Gamma$ is {\em totally knotted} if $\partial (S^3-\text{int}N(\Gamma))$ is incompressible in $S^3-\text{int}N(\Gamma)$.

\begin{theorem}\label{totally knotted}
If $r(\Gamma)>\beta_1(G)$, then $\Gamma$ contains a connected totally knotted spatial subgraph, where $\beta_1(G)$ denotes the first Betti number of $G$.
\end{theorem}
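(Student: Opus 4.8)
The plan is to argue by contradiction using the contrapositive: assume that no connected spatial subgraph of $\Gamma$ is totally knotted, and produce a closed surface $F$ containing $\Gamma$ with $r(F,\Gamma)\le\beta_1(G)$, contradicting $r(\Gamma)>\beta_1(G)$. First I would observe that since $\Gamma$ is non-trivial, $r(\Gamma)\ge 1$, so $G$ has at least one cycle and $\beta_1(G)\ge 1$. Decompose $\Gamma$ into its connected components $\Gamma_1,\dots,\Gamma_k$. For each component $\Gamma_i$, because $\Gamma_i$ is not totally knotted, the boundary $\partial N(\Gamma_i)$ of a regular neighborhood is compressible in $S^3-\text{int}N(\Gamma_i)$; compressing as far as possible and tracking the effect on $\Gamma_i$ is the standard move that realizes the ``non-totally-knotted'' hypothesis as a concrete surface.

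The key construction is this. For a connected graph $G_i$ with first Betti number $\beta_1(G_i)$, the boundary $\partial N(G_i)$ of a regular neighborhood (handlebody) is a closed orientable surface of genus $\beta_1(G_i)$. I would show that if $\Gamma_i$ is not totally knotted, then after maximally compressing $\partial N(\Gamma_i)$ in the exterior one obtains a closed surface $F_i'$ with $r(F_i',\Gamma_i)\le\beta_1(G_i)$: indeed each compression of $\partial N(\Gamma_i)$ either corresponds to a compressing disk of $F_i'$ that can be isotoped to meet $\Gamma_i$ in at most one point (at most one edge of $\Gamma_i$ runs through the corresponding handle), or it reduces genus without affecting the relevant count. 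More directly: take the closed surface to be $\partial N(\Gamma_i\cup(\text{a suitable collection of compressing disks for }\partial N(\Gamma_i)))$ and verify by a counting argument on a spanning tree of $G_i$ that the minimal intersection of $\Gamma_i$ with a compressing disk of this surface is bounded by the number of independent cycles, i.e. $\beta_1(G_i)$. Then amalgamate the $F_i'$ into a single closed connected surface $F$ containing all of $\Gamma$ (tubing the pieces together along arcs disjoint from $\Gamma$, which does not increase the representativity beyond $\max_i\beta_1(\Gamma_i)\le\beta_1(G)$), giving $r(F,\Gamma)\le\beta_1(G)$ and hence $r(\Gamma)\le\beta_1(G)$, the desired contradiction.

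The main obstacle I anticipate is the bookkeeping in the compression step: when $\partial N(\Gamma_i)$ compresses, the compressing disk may be knotted or may link $\Gamma_i$ in a complicated way, so I must argue carefully that it can be chosen (or isotoped, or replaced by an innermost/outermost one) to meet $\Gamma_i$ in a controlled number of points corresponding to at most one edge passing through. A secondary subtlety is that compressing can disconnect $\partial N(\Gamma_i)$ or produce sphere components; I would handle spheres by capping them off (they bound balls on the side away from $\Gamma_i$, or if not, $\Gamma_i$ would already be trivial inside and contribute nothing) and handle disconnection by restricting to the component that still carries $\Gamma_i$. Also one must make sure the final amalgamated $F$ genuinely \emph{contains} $\Gamma$ rather than merely having $\Gamma$ in its complement; this is why I build $F$ as the boundary of a neighborhood of $\Gamma$ together with auxiliary disks, so $\Gamma$ sits on $F$ by construction. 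Once these points are settled, the inequality $r(F,\Gamma)\le\beta_1(G)$ is a direct consequence of the genus bound on a neighborhood-boundary of a graph, and the theorem follows.
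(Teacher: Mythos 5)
There is a genuine gap, and it is fatal to the strategy rather than a bookkeeping issue. Your plan is to assume no connected spatial subgraph of $\Gamma$ is totally knotted and then to \emph{construct one} closed surface $F$ containing $\Gamma$ with $r(F,\Gamma)\le\beta_1(G)$, claiming this contradicts $r(\Gamma)>\beta_1(G)$. But $r(\Gamma)=\max_{F}r(F,\Gamma)$ is a maximum over \emph{all} closed surfaces containing $\Gamma$: the hypothesis $r(\Gamma)>\beta_1(G)$ only says that \emph{some} surface has large representativity, so exhibiting a single surface with small representativity yields no contradiction whatsoever. (For instance, every non-trivial knot lies on surfaces with $r(F,K)=2$, e.g.\ the boundary of a neighborhood of a Seifert surface, yet $r(K)$ can be arbitrarily large.) To run a contrapositive argument you must bound $r(F,\Gamma)$ for \emph{every} surface containing $\Gamma$ simultaneously, and your compression-of-$\partial N(\Gamma_i)$ construction does not do this. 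A secondary problem is that you only use the non-totally-knotted hypothesis on the components $\Gamma_i$ (compressibility of $\partial N(\Gamma_i)$), whereas the hypothesis concerns all connected subgraphs; the stronger, subgraph-wise information is exactly what is needed.

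The paper's route shows how to get the uniform bound. It proves: (i) if $\Gamma$ is not primitive, then contracting a spanning tree gives a non-trivial bouquet, a minimal non-trivial subgraph of which is minimally knotted, hence totally knotted by Ozawa--Tsutsumi, and this yields a connected totally knotted spatial subgraph of $\Gamma$ (Lemma \ref{not primitive}); (ii) if $\Gamma$ is primitive, then for a spanning tree $T$ the exterior $(E(T),\Gamma\cap E(T))$ is a trivial $\beta_1(G)$-string tangle, so $\partial N(T)$ is a bridge tangle decomposing sphere meeting $\Gamma$ in $2\beta_1(G)$ points, and Theorem \ref{main2} ($r(\Gamma)\le bs(\Gamma)/2$) gives $r(\Gamma)\le\beta_1(G)$ (Lemma \ref{primitive}). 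Theorem \ref{totally knotted} follows immediately. The essential point your proposal misses is that the only available mechanism for an \emph{upper} bound on $r(\Gamma)$ is a bridge tangle decomposition via Theorem \ref{main2}, which controls every surface containing $\Gamma$ at once through the Morse/sweep-out argument; constructing favorable individual surfaces can only ever give lower bounds.
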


Let $S$ be a 2-sphere which divides $S^3$ into two 3-balls $B_1$ and $B_2$, and suppose that a spatial graph $\Gamma$ intersects $S$ transversely in the interior of edges.
Put $\Gamma_i=\Gamma\cap B_i$.
Then we say that $(B_1,\Gamma_1)\cup_S(B_2,\Gamma_2)$ is an {\em essential tangle decomposition} of a pair $(S^3,\Gamma)$  if $S-\Gamma$ is incompressible in $S^3-\Gamma$ and there exists no disk $D_i$ properly embedded in $B_i$ which contains $\Gamma_i$.
Here we call $S$ an {\em essential tangle decomposing sphere} for $\Gamma$.
We say that a spatial graph $\Gamma$ is {\em spatially $n$-connected} if it has no essential tangle decomposing sphere $S$ with $|\Gamma\cap S|<n$.

\begin{theorem}\label{spatially}
If $r(\Gamma)=n$, then $\Gamma$ is spatially $n$-connected.
\end{theorem}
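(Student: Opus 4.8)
The plan is to prove the contrapositive: assuming $\Gamma$ is not spatially $n$-connected, I will produce a closed surface $F$ containing $\Gamma$ with $r(F,\Gamma) \geq n$ is \emph{false} — wait, more precisely, I will show that if $r(\Gamma) = n$ then every essential tangle decomposing sphere $S$ satisfies $|\Gamma \cap S| \geq n$. So suppose toward a contradiction that there is an essential tangle decomposing sphere $S$ with $|\Gamma \cap S| = m < n$. The idea is to take a closed surface $F_0$ realizing the representativity, i.e.\ $r(F_0, \Gamma) = r(\Gamma) = n$, and to compare $F_0$ with $S$ by an innermost/outermost-disk argument on the circles of $F_0 \cap S$. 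Because $S - \Gamma$ is incompressible in $S^3 - \Gamma$, any disk component of $S - F_0$ whose boundary bounds a disk on $F_0$ avoiding $\Gamma$ can be removed by an isotopy; what remains are disks in $B_i - \Gamma$ that are essential in $S - \Gamma$ only in the weak sense of meeting $\Gamma$, but $S$ itself meets $\Gamma$ only in $m$ points.

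First I would isotope $F_0$ to meet $S$ transversely and minimize $|F_0 \cap S|$ among surfaces isotopic to $F_0$ (rel $\Gamma$, keeping $\Gamma \subset F_0$). Then I would examine an innermost circle $c$ of $F_0 \cap S$ on $S$: it bounds a disk $D \subset S$ with $\mathrm{int}\, D \cap F_0 = \emptyset$, and $|D \cap \Gamma| \leq |S \cap \Gamma| = m < n$. The circle $c$ lies on $F_0$; if $c$ bounds a disk on $F_0$ as well, we can compare and either reduce the intersection (contradicting minimality) or build a sphere showing $\Gamma$ is split or reducing genus. The key case is when $c$ does \emph{not} bound a disk on $F_0$: then I want to surger $F_0$ along $D$. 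The surgery produces a new closed surface (or two) $F_0'$ containing $\Gamma$ — here we use that $D \cap \Gamma$ consists of points in the interior of $D$, so surgery along $D$ replaces an annular neighborhood of $c$ in $F_0$ by two parallel copies of $D$, and $\Gamma$ sits on the result since the strands of $\Gamma$ through $D$ can be pushed onto one of the copies. After discarding any piece of $F_0'$ not containing $\Gamma$ (and noting one piece has genus strictly smaller, or we pick the component carrying $\Gamma$), $F_0'$ is a closed surface containing $\Gamma$, and any compressing disk for $F_0'$ can be engineered from $D$ (pushed off $F_0'$) to meet $\Gamma$ in at most $m < n$ points. This would give $r(F_0', \Gamma) \leq m < n$ for \emph{this particular} $F_0'$, which is not yet a contradiction — so instead I should run the argument to show that $D$ itself, or a disk derived from it, is a compressing disk for $F_0$ meeting $\Gamma$ in fewer than $n$ points, contradicting $r(F_0, \Gamma) = n$.

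The cleaner route, which I would actually pursue: since $|F_0 \cap S|$ is minimal and $S - \Gamma$ is incompressible in $S^3 - \Gamma$, each circle of $F_0 \cap S$ that is inessential in $S - \Gamma$ is also inessential on $F_0$ (otherwise surger $F_0$ to reduce genus or intersection); so after cleanup either $F_0 \cap S = \emptyset$, or every circle of intersection is essential in $S - \Gamma$. If $F_0 \cap S = \emptyset$, then $F_0$ lies in one ball $B_i$, so $\Gamma \subset B_i$ lies on a disk $D_i$, contradicting that $S$ is an essential tangle decomposing sphere (which forbids such $D_i$). If there is an essential intersection circle, take an innermost disk $D$ on $S$: its boundary $c$ is essential in $S - \Gamma$, hence $D$ meets $\Gamma$, and pushing $D$ slightly off $F_0$ gives a disk $D'$ with $\partial D' \subset F_0$ and $|\partial D' \cap \Gamma| \le |D \cap \Gamma| \le m < n$; to conclude $D'$ is a genuine compressing disk for $F_0$ I need $\partial D' = c$ to be essential on $F_0$, which follows from the minimality cleanup together with $c$ being essential in $S - \Gamma$. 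That contradicts $r(F_0, \Gamma) = n$.

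The main obstacle is the bookkeeping of which circles can be removed: I must carefully justify that minimizing $|F_0 \cap S|$ (rel the fixed graph $\Gamma \subset F_0$, so isotopies are not allowed to move $\Gamma$ off $F_0$) together with incompressibility of $S - \Gamma$ forces every remaining intersection circle to be essential on \emph{both} surfaces, handling the vertices of $\Gamma$ and the possibility that a circle runs through a vertex. The role of vertices — that $\partial D$ in the definition of $r(F,\Gamma)$ may pass through vertices — needs to be tracked so that the count $|\partial D' \cap \Gamma|$ is legitimate. Everything else is a standard innermost-circle surgery argument.
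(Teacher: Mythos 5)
Your final (``cleaner'') route is essentially the paper's own proof: take a closed surface $F$ with $r(F,\Gamma)=n$, make $F$ transverse to $S$ with $|F\cap S|$ minimal, and show that an innermost disk of $S$ bounded by a loop of $F\cap S$ is a compressing disk for $F$ meeting $\Gamma$ in at most $|\Gamma\cap S|<n$ points, contradicting $r(F,\Gamma)=n$ --- and the paper justifies the key point you flag as the remaining obstacle (that the innermost loop is essential in $F$) no more fully than you do, saying only that it holds ``since $S$ is an essential tangle decomposing sphere.'' The one slip worth noting is your handling of the case $F\cap S=\emptyset$: the correct reason it cannot occur is that an essential tangle decomposing sphere must meet $\Gamma$ (otherwise one side is the empty tangle, which is contained in a disk), and since $\Gamma\subset F$ this forces $F\cap S\neq\emptyset$, not that a graph contained in a ball must lie on a disk.
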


\begin{corollary}
We have 
\begin{enumerate}
\item $r(K)=2$ for any composite knot $K$.
\item $r(K)\le 4$ for a knot $K$ with an essential Conway sphere.
\end{enumerate}
\end{corollary}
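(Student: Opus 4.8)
The plan is to derive both statements from Theorem~\ref{spatially}, which tells us that $r(K)=n$ forces $K$ to be spatially $n$-connected, i.e. it admits no essential tangle decomposing sphere $S$ with $|K\cap S|<n$. The strategy is in each case to exhibit an essential tangle decomposing sphere with a small number of intersection points, and then read off an upper bound on $r(K)$ from the contrapositive of Theorem~\ref{spatially}; the lower bound $r(K)\ge 2$ for any non-trivial knot comes for free from Theorem~\ref{knots}(1).

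For part (1), suppose $K=K_1\# K_2$ is composite with both $K_i$ non-trivial. The defining swallow-follow (connect-sum) $2$-sphere $S$ meets $K$ in exactly two points, $|K\cap S|=2$, and splits $(S^3,K)$ into the two knotted summands-with-a-string. I would check that $S$ is an essential tangle decomposing sphere: $S-K$ is incompressible in $S^3-K$ precisely because neither $K_i$ is trivial (a compressing disk would reveal one side as a trivial arc in a ball, contradicting non-triviality of that summand), and neither side contains a properly embedded disk carrying the arc, again because the summand is knotted. Hence $K$ is \emph{not} spatially $3$-connected, so by Theorem~\ref{spatially} we cannot have $r(K)=3$; combined with $r(K)\le b(K)$ being irrelevant here, the cleaner route is: if $r(K)=n$ then $n\le |K\cap S|=2$, and since $r(K)\ge 2$ we get $r(K)=2$.

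For part (2), if $K$ has an essential Conway sphere $S$, then by definition $|K\cap S|=4$ and $S$ is an essential tangle decomposing sphere in the sense of subsection~\ref{sub5}: essentiality of the Conway sphere is exactly the incompressibility of $S-K$ in $S^3-K$ together with the condition that neither side is a trivial (rational) tangle, i.e. neither ball contains a properly embedded disk containing the two arcs. Then Theorem~\ref{spatially} gives: if $r(K)=n$, then $K$ is spatially $n$-connected, so $n\le |K\cap S|=4$, yielding $r(K)\le 4$.

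The only real work is verifying in each case that the natural decomposing sphere genuinely satisfies the two clauses of the definition of \emph{essential tangle decomposition}—incompressibility of $S-K$ and the absence of a disk on either side carrying the tangle strings. For the composite case this is standard and follows directly from the non-triviality of the summands; for the Conway sphere case it is essentially the definition of \emph{essential} Conway sphere. So I expect the main (and only mild) obstacle to be phrasing these standard facts precisely in the language of essential tangle decomposing spheres used here, after which both bounds drop out of Theorem~\ref{spatially}.
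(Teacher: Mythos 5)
Your proposal is correct and matches the paper's intended argument: the corollary is stated as an immediate consequence of Theorem~\ref{spatially} (with the lower bound $r(K)\ge 2$ from Theorem~\ref{knots}(1)), applied to the summing sphere ($|K\cap S|=2$) and the essential Conway sphere ($|K\cap S|=4$), exactly as you do. Your verification that these spheres are essential tangle decomposing spheres is the only content needed, and it is sound (note only that incompressibility of the twice-punctured summing sphere holds for homological reasons regardless of the summands; non-triviality of the summands is what rules out a disk containing the arc on either side).
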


\subsection{Strong spatial embedding conjecture}\label{sub6}

Let $G$ be a non-planar graph which is contained in a closed surface $F$.
Robertson and Vitray \cite{RV} defined the {\em representativity} of a pair $(F,G)$ as
\[
r(F,G)=\min_{C\in \mathcal{C}_F}|C\cap G|,
\]
where $\mathcal{C}_F$ is the set of all essential loops embedded in $F$.
Moreover we define the {\em representativity} of $G$ as
\[
r(G)=\max_{F\in \mathcal{F}} r(F,G),
\]
where $\mathcal{F}$ is the set of all closed surfaces containing $G$.

Then the strong embedding conjecture can be stated as follows.

\begin{conjecture}[Strong embedding conjecture \cite{J}]
It holds $r(G)\ge 2$ for a $2$-connected non-planar graph $G$.
\end{conjecture}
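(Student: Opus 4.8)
The statement $r(G)\ge 2$ asserts the existence of \emph{one} closed surface $F$ containing $G$ with face-width at least $2$; since $r(G)$ is defined as a maximum, producing a single good embedding suffices. The plan is to build such an embedding by controlling face-width, starting from a cellular embedding and then upgrading it. First I would observe that $r(F,G)\ge 1$ is equivalent to the embedding being cellular: an essential simple closed curve disjoint from $G$ exists precisely when some face fails to be an open disk, because a curve avoiding $G$ cannot cross an edge and hence lies inside a single face. Since every connected graph admits a cellular embedding in some closed surface---for instance, a minimum orientable-genus embedding is cellular by the classical theorem that the genus is realized cellularly---the bound $r(F,G)\ge 1$ comes for free, and the entire difficulty is to pass from $r=1$ to $r\ge 2$.

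Next I would analyze what obstructs $r(F,G)\ge 2$ for a cellular embedding. An essential curve $\gamma$ meeting $G$ in a single point hits either the interior of an edge (crossing one edge once) or a vertex. The vertex case is equivalent, for a cellular embedding, to some facial walk repeating a vertex, i.e.\ to a face whose boundary is not a cycle; thus $r(F,G)\ge 2$ amounts to the embedding being a \emph{closed $2$-cell} (strong) embedding in which every face is bounded by a cycle. Here the hypothesis that $G$ is $2$-connected enters decisively: it forbids cut-vertices, which is exactly the local obstruction to repairing a non-cyclic facial walk into a cycle, and it is the standard hypothesis under which closed $2$-cell embeddings are expected to exist.

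The core of the plan is then an extremal surgery argument. Among all cellular embeddings of $G$, over both orientable and non-orientable surfaces, I would select one that is extremal for a suitable monovariant---for example one maximizing the face-width, or one maximizing the number of faces bounded by cycles. Supposing a one-point essential curve $\gamma$ survives, I would attempt a local re-embedding: cut the surface along $\gamma$ (or along a curve through the offending face) and reglue, using $2$-connectedness to reroute the graph so that the bad face is split into faces bounded by cycles. The aim is to produce a new cellular embedding strictly larger in the chosen monovariant, contradicting extremality and forcing $r(F,G)\ge 2$.

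The main obstacle is that this last step is precisely the \emph{strong embedding conjecture} (equivalently the closed $2$-cell embedding conjecture), a well-known open problem that implies the cycle double cover conjecture; so no unconditional completion of the plan is to be expected. Concretely, the difficulty is that a surgery eliminating one single-point curve can create others, and no monovariant is known to decrease under all admissible re-embedding moves---global control of face-width under surgery is exactly where general approaches stall. The realistic outcome is therefore a conditional theorem: carry the argument through under an extra structural hypothesis (bounded genus, or an excluded minor such as $K_5$, where closed $2$-cell embeddings are known to exist), and record the general case as the genuine barrier.
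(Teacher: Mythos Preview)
The statement you were asked to prove is labeled a \emph{Conjecture} in the paper, and the paper offers no proof of it whatsoever: it is the classical strong embedding conjecture of Jaeger, quoted as motivation for the analogous ``strong spatial embedding conjecture'' that follows. So there is nothing to compare your attempt against---the paper simply records the problem as open.

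To your credit, your write-up recognizes this. You correctly unwind the definitions to identify $r(F,G)\ge 2$ with the existence of a closed $2$-cell (strong) embedding, you note that $2$-connectedness is the natural hypothesis ruling out the obvious local obstruction, and you then acknowledge explicitly that the decisive surgery step is the strong embedding conjecture itself, equivalent to the circular embedding conjecture and implying the cycle double cover conjecture. That diagnosis is accurate: no monovariant is currently known that makes the ``eliminate one bad curve without creating another'' argument terminate in general. Your proposal is therefore not a proof but an honest outline of why the problem is hard, together with the standard partial results (bounded genus, excluded minors) where the program does go through. That is the correct assessment; just be aware that the paper never claimed otherwise.
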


Similary we propose the following conjecture.

\begin{conjecture}[Strong spatial embedding conjecture]
It holds $r(\Gamma)\ge 2$ for a non-trivial spatial graph $\Gamma$ of a $2$-connected graph $G$.
\end{conjecture}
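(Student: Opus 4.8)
The plan is to reduce the spatial statement to its combinatorial shadow and then treat the geometric realization separately. Observe first that the boundary $\partial D$ of any compressing disk for $F$ is, by definition, an essential loop on $F$. Hence the loops over which $r(F,\Gamma)$ minimizes form a subset of all essential loops on $F$, and consequently
\[
r(F,\Gamma)=\min_{D\in\mathcal{D}_F}|\partial D\cap\Gamma|\ \ge\ \min_{C\in\mathcal{C}_F}|C\cap\Gamma|=r(F,G),
\]
the right-hand side being the abstract representativity of the induced embedding $G\hookrightarrow F$ in the sense of \cite{RV}. Taking the maximum over all closed surfaces $F\subset S^{3}$ containing $\Gamma$ gives $r(\Gamma)\ge\max_{F}r(F,G)$, the maximum now ranging only over spatially realizable surfaces. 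Thus it suffices to exhibit one closed surface $F$ in $S^{3}$ containing $\Gamma$ whose induced embedding of $G$ has representativity at least $2$; equivalently, by the standard reformulation, a \emph{closed $2$-cell embedding}, in which every face is an open disk bounded by a cycle, so that in particular no edge of $\Gamma$ bounds the same face on its two sides.

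Next I would isolate the two local conditions such an $F$ must defeat, mirroring the knot case of Theorem \ref{knots}(1). A compressing disk with $|\partial D\cap\Gamma|=1$ would meet $\Gamma$ transversely at a single interior point $p$ of an edge $e$; the arc $\partial D\setminus p$ would then pass from the face on one side of $e$ into the face on the other and, meeting $\Gamma$ nowhere else, could never close up---unless those two faces coincide. Hence the closed $2$-cell condition ``no edge has the same face on both sides'' rules out the odd crossing $|\partial D\cap\Gamma|=1$ by a purely local separation argument (the crossing-at-a-vertex case being handled by resolving the vertex into its incident facial cycles). A compressing disk with $|\partial D\cap\Gamma|=0$ has $\partial D$ contained in a single face of $\Gamma$ on $F$; if that face is an incompressible disk the loop is inessential, a contradiction. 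This is exactly where the knotting of $\Gamma$ must enter, and it is the spatial analogue of choosing a \emph{minimal genus} Seifert surface: for a knot $K$ one takes $F=\partial N(S)$ with $S$ taut, the curve $K$ separates $F$ into two copies of $S$ (settling the parity case at once), while incompressibility of $S$ settles the zero-crossing case.

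Accordingly I would construct $F$ as the boundary of a regular neighbourhood of a \emph{taut spanning surface} for $\Gamma$, built inductively from an ear decomposition of the $2$-connected graph $G$. Whitney's theorem presents $G$ as a base cycle $G_{0}$ enlarged by a sequence of ears; spatially $\Gamma_{0}$ is a knot, and Theorem \ref{knots}(1) already furnishes a surface containing $\Gamma_{0}$ with representativity $2$. I would then attach, for each successive ear, a band or tube carrying the new arc, arranged so that (i) each newly created edge borders two distinct disk faces, preserving the closed $2$-cell property, and (ii) the spanning surface remains incompressible of smallest genus in the complement. The bound $r(\Gamma)\le bs(\Gamma)/2$ of Theorem \ref{main2} guarantees the outcome is finite, so that only finitely many face and disk types need be controlled at each step.

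The hard part, I expect, is twofold, and is precisely where the conjecture earns its status. First, the combinatorial target---realizing a closed $2$-cell embedding of $G$---is the content of the still open closed $2$-cell, equivalently strong, embedding conjecture of \cite{J} for abstract $2$-connected graphs, so an unconditional proof cannot merely invoke it but must produce the embedding inside the inductive construction. Second, and more seriously for the spatial theory, is the zero-crossing case: ensuring that every complementary face of $\Gamma$ on $F$ is an \emph{incompressible} disk in $S^{3}$ requires a genuine tautness input for a spanning surface of a graph, a notion far less developed than for knots. I therefore expect the incompressibility step---rather than the parity argument, which is local and combinatorial---to be the true obstacle, and a natural first milestone would be to settle the conjecture for those classes of $2$-connected graphs (for instance, graphs with no $V_{8}$ minor) for which the abstract closed $2$-cell embedding is already known, thereby reducing the problem to spatial realization and tautness alone.
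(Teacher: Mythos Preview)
The statement you are attempting to prove is not a theorem in the paper but a \emph{conjecture}: the author explicitly proposes it as the spatial analogue of the (also open) strong embedding conjecture of \cite{J} and offers no proof. There is therefore nothing in the paper to compare your argument against.

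As for the proposal itself, you are not claiming a proof either, and you are candid about this. Your reduction $r(F,\Gamma)\ge r(F,G)$ is correct, since boundaries of compressing disks form a subset of the essential loops on $F$. You are also right that a closed $2$-cell (strong) embedding of $G$ on some $F\subset S^{3}$ containing $\Gamma$ would immediately give $r(F,\Gamma)\ge 2$; in fact the zero-crossing case needs no tautness at all once the faces are disks, since an essential curve cannot lie in a disk face. The genuine obstacle is exactly the one you flag at the end: one must produce, for an \emph{arbitrary} non-trivial spatial embedding $\Gamma$, a closed surface in $S^{3}$ carrying $\Gamma$ as a strong embedding. Your ear-decomposition scheme starting from Theorem~\ref{knots}(1) for the base cycle is a reasonable heuristic, but attaching an ear to a given $(F_{k},\Gamma_{k})$ while preserving both the closed $2$-cell property and the ambient embedding is precisely where the difficulty lives, and nothing in the paper (or, to my knowledge, elsewhere) supplies such a step. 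So your outline correctly locates the problem but does not resolve it; the statement remains open.
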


\section{Preparations}

\subsection{Essential Morse bridge position}

Let $\Gamma$ be a spatial graph of a graph $G$ with a bridge tangle decomposition $(B_+,\Gamma_+)\cup_{S}(B_-,\Gamma_-)$, and $F$ be a closed surface which contains $\Gamma$.

We define a {\em Morse bridge position} of a pair $(F,\Gamma)$ with respect to a Morse function $h:S^3\to \Bbb{R}$ with two critical points as follows.
We divide $S^3$ into three parts $M_+$, $M_0$ and $M_-$ so that $M_{\pm}\cap M_0=h^{-1}(c_{\pm})$ is a level 2-sphere for $c_+>0>c_-$.
We say that a pair $(F,\Gamma)$ with a bridge tangle decomposition $(B_+,\Gamma_+)\cup_{S}(B_-,\Gamma_-)$ is in a {\em Morse bridge position} if 
\begin{enumerate}
\item $h|_F$ is a Morse function,
\item $\Gamma$ is disjoint from critical points of $F$,
\item all maximal (resp. minimal) points of $F$ are contained in $M_+$ (resp. $M_-$),
\item all saddle points of $F$ are contained in $M_0$,
\item $(M_{\pm},\Gamma\cap M_{\pm})$ is homeomorphic to $(B_{\pm},\Gamma_{\pm})$ as a pair,
%$\Gamma\cap M_{\pm}$ consists of $N(T_{\pm};\Gamma)$, where $T_{\pm}$ is a spanning forest of $\Gamma_{\pm}$,
\item $(M_0,\Gamma\cap M_0)$ is homeomorphic to $(S,\Gamma\cap S)\times I$ as a pair,
%$\Gamma\cap M_0$ consists of monotone arcs in $M_0$,
\item $S=h^{-1}(0)$.
\end{enumerate}
See Figure \ref{morse}.

\begin{figure}[htbp]
	\begin{center}
	\includegraphics[trim=0mm 0mm 0mm 0mm, width=.5\linewidth]{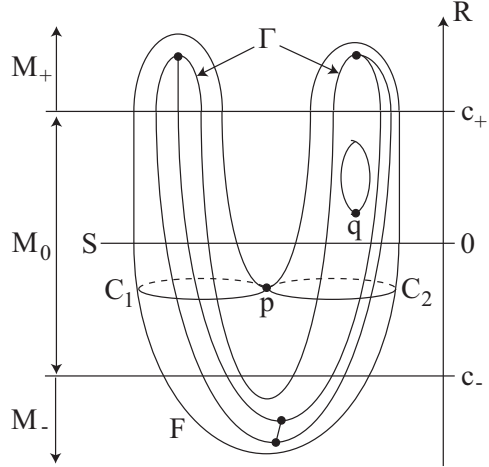}
	\end{center}
	\caption{$(F,\Gamma)$ in a Morse bridge position}
	\label{morse}
\end{figure}

%In the following lemmas, any isotopy should keep a bridge tangle decomposition $(B_+,\Gamma_+)\cup_{S}(B_-,\Gamma_-)$.

\begin{lemma}\label{lemma1}
A pair $(F,\Gamma)$ with a bridge tangle decomposition $(B_+,\Gamma_+)\cup_{S}(B_-,\Gamma_-)$ can be isotoped so that it is in a Morse bridge position.
\end{lemma}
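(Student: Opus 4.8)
The plan is to realize the desired Morse bridge position in two stages: first arrange $\Gamma$, then arrange $F$ relative to the fixed $\Gamma$. I would begin from the given bridge tangle decomposition $(B_+,\Gamma_+)\cup_S(B_-,\Gamma_-)$. As noted in the remark preceding Theorem~\ref{main}, such a decomposition can be put into a bridge position with respect to the standard height function: isotope so that $S=h^{-1}(0)$, each component of $\Gamma_+$ is a monotone tree or an arc with a single maximum, and each component of $\Gamma_-$ is a monotone tree or an arc with a single minimum. Having fixed $\Gamma$ in this position, I then stretch the collar of $S$: replace $h$ by a Morse function with two critical points whose level spheres $h^{-1}(c_\pm)$ bound the region $M_0 = h^{-1}([c_-,c_+])$, chosen thin enough around $h^{-1}(0)$ that $(M_0,\Gamma\cap M_0)\cong (S,\Gamma\cap S)\times I$ and $(M_\pm,\Gamma\cap M_\pm)\cong (B_\pm,\Gamma_\pm)$; this secures conditions (5), (6), (7) for $\Gamma$ and costs nothing since $\Gamma$ meets the collar region in vertical strings only.

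Next I would put $F$ into general position with respect to $h$. A generic small isotopy of $F$ (supported away from $\Gamma$, which is legitimate since $F\supset\Gamma$ and we may isotope $F$ rel a neighborhood of $\Gamma$, or more carefully isotope $F$ while dragging $\Gamma$ along and then restoring $\Gamma$) makes $h|_F$ Morse with distinct critical values and keeps $\Gamma$ disjoint from the critical points of $F$, giving conditions (1) and (2). The remaining work is to slide the critical points of $F$ into the correct horizontal slabs: all maxima into $M_+$, all minima into $M_-$, all saddles into $M_0$. This is the standard ``rearrangement'' argument for Morse functions: along any arc of $F$ connecting critical points of non-adjacent index one can isotope $F$ to push a maximum up past a saddle, a minimum down past a saddle, and interchange the heights of critical points of equal index, because the relevant handle attachments are independent. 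Care is needed to perform these isotopies without dragging critical points of $F$ across $\Gamma$; since $\Gamma$ is fixed and one-dimensional while the critical points are isolated, a small perturbation of each sliding isotopy avoids $\Gamma$, and to preserve conditions (5)--(7) one performs the slides within $M_+$ and $M_-$ (which are product regions for $\Gamma$) and only momentarily enlarges $M_0$, restoring its thinness afterward.

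The main obstacle is reconciling the rearrangement of $F$'s critical points with the rigidity of $\Gamma$ and with conditions (5)--(7). Concretely, when I push a maximum of $F$ upward past a saddle, the isotopy of $F$ sweeps through a region that may contain strands of $\Gamma$; I must check that $\Gamma\cap M_+$ stays unknotted-as-a-tangle throughout, equivalently that $(M_+,\Gamma\cap M_+)$ remains homeomorphic to $(B_+,\Gamma_+)$. The key point making this work is that $\Gamma_+$ consists of monotone trees and single-maximum arcs, so $\Gamma\cap M_+$ is a trivial tangle in $M_+$ and any isotopy of $F$ supported in $M_+$ and transverse to $\Gamma$ preserves this triviality; an analogous statement holds for $M_-$. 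Thus I would first enlarge $M_+$ (pushing $c_+$ up, keeping $\Gamma$'s verticality in the collar) to contain all maxima and all saddles destined to move up, do all the upward slides there, then shrink back; symmetrically for $M_-$; what remains in $M_0$ are exactly the saddles, and by a final thinning isotopy of the collar — again possible because $\Gamma$ meets it vertically — I restore $(M_0,\Gamma\cap M_0)\cong(S,\Gamma\cap S)\times I$. Assembling these pieces yields a Morse bridge position, completing the proof.
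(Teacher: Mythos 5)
Your overall strategy --- first put $\Gamma$ in bridge position, then move $F$ --- is genuinely different from the paper's, but its central step is not justified, and this is where the real content of Lemma~\ref{lemma1} lies. The ``standard rearrangement argument'' is a statement about modifying a Morse \emph{function} on an abstract manifold; here the function is the restriction of the \emph{fixed} ambient height function $h$, and the only allowed operation is an isotopy of the pair $(F,\Gamma)$, which moreover must be rel $\Gamma$ once you have frozen $\Gamma$ in bridge position (since $\Gamma\subset F$, you cannot fix $\Gamma$ and also move $F$ near it). Realizing a reordering of critical values by such an ambient isotopy is not automatic: to raise a maximum of $F$ past the level $c_+$ you need a monotone route into $M_+$ in the complement of $F\cup\Gamma$, and nested sheets of $F$ (or strands of $\Gamma$) above the maximum can block every such route; detouring non-monotonically creates new critical points. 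Your fallback --- ``a small perturbation of each sliding isotopy avoids $\Gamma$ because $\Gamma$ is one-dimensional'' --- fails a dimension count: the track of an isotopy of a $2$-dimensional piece of $F$ is $3$-dimensional, and in $S^3$ it generically meets the $1$-complex $\Gamma$ in a $1$-dimensional set, so avoidance is not achieved by general position; one must exhibit explicit routes (compare the arc $\alpha\subset F-\Gamma$ used in Lemma~\ref{lemma2}). Finally, the collar manipulation is inconsistent: pushing $c_+$ up \emph{shrinks} $M_+$ and enlarges $M_0$, and as soon as $c_+$ passes a maximum or vertex of $\Gamma_+$ the product condition (6) is destroyed, so ``keeping $\Gamma$'s verticality in the collar'' cannot be maintained while you gather critical points of $F$ there.

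The paper's proof avoids the rearrangement problem altogether, and it does so precisely by using the triviality of the tangles, which your argument never exploits where it matters (you use it only to position $\Gamma$, not to control $F$). Namely, it takes the disk $D_\pm\supset\Gamma_\pm$, a spanning forest $T_\pm$ of $\Gamma_\pm$ and connecting arcs $\alpha_i\subset D_\pm$, and \emph{defines} $M_\pm=N(T_\pm\cup\bigcup\alpha_i)$; then $(M_\pm,\Gamma\cap M_\pm)\cong(B_\pm,\Gamma_\pm)$ and the product condition (6) hold by construction (parallelism of $\partial N(T_\pm\cup\bigcup\alpha_i)$ to $\partial B_\pm$), and --- the key point --- $F$ meets $M_\pm$ automatically in disks, because $T_\pm\subset\Gamma\subset F$ and the tubes around the $\alpha_i$ meet $F$ in meridian disks. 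After that, the only move needed is to pull the maxima and minima of $F\cap M_0$ into $M_+$ and $M_-$, a far more limited adjustment than the wholesale reordering you propose. To repair your write-up you would either have to prove an ambient rearrangement statement rel $\Gamma$ (which is essentially the lemma itself), or switch to the paper's device of building $M_\pm$ as thin neighborhoods of spines of the trivial tangles.
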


\begin{proof}
We basically follow the argument of \cite[Claim 4.1]{MO}.

Since $(B_{\pm},\Gamma_{\pm})$ is a trivial bridge tangle, there exists a disk $D_{\pm}$ in $B_{\pm}$ which contains $\Gamma_{\pm}$ such that there exists a region $R_{\pm}$ of $D_{\pm}-\Gamma_{\pm}$ which contacts with all components of $\Gamma_{\pm}$.
We take the spanning forest $T_{\pm}$ (a ``middle'' point when a component of $T_{\pm}$ is an arc) for $\Gamma_{\pm}$ and connect each component of $T_{\pm}$ with a point $x_{\pm}$ in $R_{\pm}$ by an arc $\alpha_i$ so that $N(T_{\pm}\cup \bigcup \alpha_i;D_{\pm})$ is a disk (Figure \ref{disk}).
Then we may assume that $F$ intersects each component of $N(T_{\pm})$ in a disk and $N(\alpha_i)$ in disks.
We put $M_{\pm}=N(T_{\pm}\cup \bigcup \alpha_i)$ and $M_0=S^3-\text{int}(M_+\cup M_-)$.
Then the condition (5) holds, and (6) also holds since the 2-sphere $\partial N(T_{\pm}\cup \bigcup \alpha_i)$ is parallel to $\partial B_{\pm}$ in $(B_{\pm},\Gamma_{\pm})$.

Next let $h:S^3\to \Bbb{R}$ be a Morse function with two critical points $x_+$ and $x_-$.
We may assume that $M_{\pm}\cap M_0=h^{-1}(c_{\pm})$ for $c_+>0>c_-$ and the condition (7) $S=h^{-1}(0)$ by (6).
Since $F\cap M_{\pm}$ consists of disks, we may assume that each disk has only one critical point with respect to $h$.
Morse theory provides that every critical point of $F\cap M_0$ is either maximal point, saddle point or minimal point.
Thus the condition (1) holds, and (2) also holds if we isotope $\Gamma$ slightly.
Finally if we pull up or down maximal points or minimal points of $F\cap M_0$ into $M_+$ or $M_-$ respectively, then the conditions (3) and (4) hold.
\end{proof}

\begin{figure}[htbp]
	\begin{center}
	\includegraphics[trim=0mm 0mm 0mm 0mm, width=.4\linewidth]{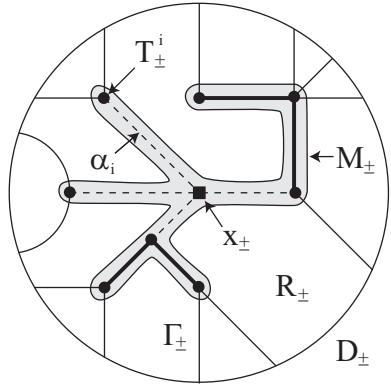}
	\end{center}
	\caption{$M_{\pm}=N(T_{\pm}\cup \bigcup \alpha_i)$}
	\label{disk}
\end{figure}

A saddle point $p$ of $F$ is {\em inessential} if at least one of the two loops $C_1$, $C_2\subset h^{-1}(h(p))$ contacting with $p$ is inessential in $F$.
In Figure \ref{morse}, $p$ is an inessential saddle point and $q$ is an essential saddle point of $F$.

\begin{lemma}\label{lemma2}
A pair $(F,\Gamma)$ with a bridge tangle decomposition $(B_+,\Gamma_+)\cup_{S}(B_-,\Gamma_-)$ can be isotoped so that $F$ has no inessential saddle point in a Morse bridge position.
\end{lemma}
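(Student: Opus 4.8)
The plan is to remove inessential saddle points one at a time by a standard innermost-disk argument, taking care that the isotopies performed respect the bridge tangle decomposition, i.e.\ keep $(F,\Gamma)$ in a Morse bridge position. First I would start with $(F,\Gamma)$ in a Morse bridge position, which exists by Lemma \ref{lemma1}, and suppose toward a contradiction (or rather, toward an inductive reduction) that $F$ has at least one inessential saddle point. Among all inessential saddle points $p$, one of the two loops $C_1,C_2\subset h^{-1}(h(p))$ meeting $p$ bounds a disk in $F$; call such a disk $E$. I would choose $p$, and then an innermost such disk $E\subset F$, so that $E$ contains no other saddle points of $F$ in its interior and no critical points of $F$ of index giving a ``nested'' inessential saddle; thus $\mathrm{int}\,E$ consists only of maxima and minima of $F$ together with possibly some other critical points that can be cancelled trivially. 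Because $\Gamma$ is disjoint from the critical points of $F$ and from the saddle levels, and because $\partial E$ is a loop in a level sphere $h^{-1}(h(p))$ that lies in $M_0\cong (S,\Gamma\cap S)\times I$, the subarcs of $\Gamma$ that pass through the region of $M_0$ near $E$ are vertical (product) arcs; I want to arrange that the disk $E$, pushed slightly off its level, is disjoint from $\Gamma$ or meets $\Gamma$ in a controlled way.

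Next I would perform the isotopy of $F$ supported in a neighborhood of $E\cup(\text{collar down to the appropriate critical level})$ that cancels the saddle $p$ against the innermost cap of $E$, or, more simply, pushes the innermost disk $E$ across the level sphere $h^{-1}(h(p))$ and then down into $M_-$ (or up into $M_+$), thereby eliminating the saddle $p$ and strictly decreasing the number of saddle points — or at least the number of inessential ones — of $h|_F$. The key point to check is that this isotopy can be chosen to be supported in a ball $B$ meeting $\Gamma$ in a trivial tangle (in fact in vertical arcs, since $B$ can be taken inside $M_0$ or straddling one of the level spheres $h^{-1}(c_\pm)$ where $\Gamma$ is still product-like), so that after the isotopy conditions (1)--(7) of the Morse bridge position still hold: $\Gamma$ stays disjoint from critical points of $F$, the maxima/minima stay in $M_\pm$, the saddles stay in $M_0$, and $(M_\pm,\Gamma\cap M_\pm)$ and $(M_0,\Gamma\cap M_0)$ keep their required homeomorphism types because we only pushed a trivial subsurface of $F$ around, not $\Gamma$ itself and not the decomposing spheres. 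Iterating, since the number of saddle points of $h|_F$ is finite and strictly decreases (or the number of inessential ones strictly decreases with the total fixed), after finitely many steps $F$ has no inessential saddle point, still in a Morse bridge position, which is the assertion.

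The main obstacle I expect is bookkeeping around the disk $E$ and the vertical arcs of $\Gamma$: one must verify that an innermost inessential-saddle disk $E$ really can be isotoped across its level sphere through a ball that meets $\Gamma$ only in unknotted, unlinked vertical arcs, so that the move does not create knotting or clasping of $\Gamma\cap M_\pm$ and does not destroy the product structure of $(M_0,\Gamma\cap M_0)$. In particular, when $\partial E=C_i$ is inessential in $F$, the disk it bounds in $F$ may itself contain further saddle points; choosing $E$ innermost among \emph{all} disks bounded by level loops at inessential saddles (not just at $p$) handles this, and then $\mathrm{int}\,E$ contains only maxima and minima, which are removed by standard cancellation with the saddle. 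A secondary point is to make sure the isotopy is ambient and fixes the height function's structure away from the support ball, so that $S=h^{-1}(0)$ and the spheres $h^{-1}(c_\pm)$ are unaffected; this is automatic if the support ball is disjoint from these three distinguished level spheres, which can be arranged by first sliding $E$ (if necessary) into the interior of one of the three regions $M_+,M_0,M_-$.
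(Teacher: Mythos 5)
Your overall strategy --- pick an innermost inessential saddle so that its cap disk in $F$ contains a single maximum or minimum, cancel the pair by an isotopy, and induct on the number of saddle points --- is the same as the paper's. But there is a genuine gap at the decisive step: you claim the cancelling isotopy moves ``only a trivial subsurface of $F$\,\dots, not $\Gamma$ itself,'' and earlier that ``$\Gamma$ is disjoint from \dots the saddle levels.'' Both are false in general. Since $\Gamma\subset F$ and $(M_0,\Gamma\cap M_0)$ is a product, the vertical strands of $\Gamma$ meet every level of $M_0$, including the saddle levels; more importantly, the cap disk $E\subset F$ bounded by the inessential level loop typically contains strands of $\Gamma$ --- the bridges of $\Gamma_\pm$ lie precisely in the disks $F\cap M_\pm$ that cap off inessential saddles. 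So any isotopy that pushes $E$ past the saddle must carry $\Gamma\cap E$ along with it, and if you push ``down into $M_-$'' a cap that carries strands of $\Gamma_+$, you drag those strands across $S=h^{-1}(0)$, change $\Gamma\cap S$, and destroy conditions (5)--(7), i.e.\ the very bridge tangle decomposition the lemma requires you to preserve. Nothing in your argument rules this out; the innermost choice and the finiteness induction are fine, but the heart of the lemma is exactly this interaction between the cancelling disk and $\Gamma$.

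The paper's proof handles this point explicitly, and the mechanism is worth noting. If $h(p)>0$, it takes the ball $B$ bounded by the cap $D\subset F$ and a level disk $D'\subset h^{-1}(h(p))$, shrinks $B$ vertically so the maximum of $D$ cancels with $p$, and then slides the shrunken ball along a monotone arc $\alpha\subset F-\Gamma$ up to a point of $(F\cap M_+)-\Gamma_+$; this isotopy \emph{does} move $\Gamma\cap B$, but it is supported above the level $h(p)>0$, hence inside $B_+$, so $\Gamma\cap S$ and the pair $(B_\pm,\Gamma_\pm)$ are unchanged and the Morse bridge position is restored. If $h(p)\le 0$, it first pushes the slab $F\cap h^{-1}([h(p)-\epsilon,0])$ vertically above $S$ keeping $\Gamma$ fixed --- possible precisely because of the product structure in condition (6) --- and then the previous case applies. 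Your proposal needs an argument of this kind: an isotopy that is allowed to move $\Gamma\cap E$ but is confined to one side of $S$ (for instance via such an arc-slide in $F-\Gamma$, together with the preliminary vertical shift when the saddle sits at level $\le 0$), rather than the assertion that $\Gamma$ is untouched.
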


\begin{proof}
We basically follow the argument of \cite{S}, \cite{Sch} or \cite{O}.

Suppose that $F$ has an inessential saddle points, and let $p$ be an innermost inessential saddle point, that is, one of the two loops $C_1,\ C_2$ in $h^{-1}(h(p))$ contacting with $p$ bounds a disk in $F$ which contains only one maximal or minimal point.
Without loss of generality, we may assume that $C_1$ bounds a disk $D$ in $F$ which contains only one maximal point.
By exchanging the critical point $x_+$ in $M_+-(F\cap M_+)$ if necessary, we may assume that $C_1$ bounds a disk $D'$ in $h^{-1}(h(p))$ which does not contain $C_2$.

There are two cases.
\begin{enumerate}
\item [(a)] $h(p)>0$
\item [(b)] $h(p)\le 0$
\end{enumerate}

In case (a), take a monotone arc $\alpha$ connecting $p$ with a point of $(F\cap M_+)-\Gamma_+$ in $F-\Gamma$.
Let $B$ be a 3-ball bounded by a 2-sphere $D\cup D'$ which does not contain $\alpha$.
We first isotope $D$ into the region below $D'$ by shrinking $B$ vertically so that the maximal point of $D$ can be cancelled with $p$.
Next we isotope the shrinked $B$ by sliding along $\alpha$ so that $(F,\Gamma)$ is in a Morse bridge position again.
See Figure \ref{isotopy}.

\begin{figure}[htbp]
	\begin{center}
	\includegraphics[trim=0mm 0mm 0mm 0mm, width=.8\linewidth]{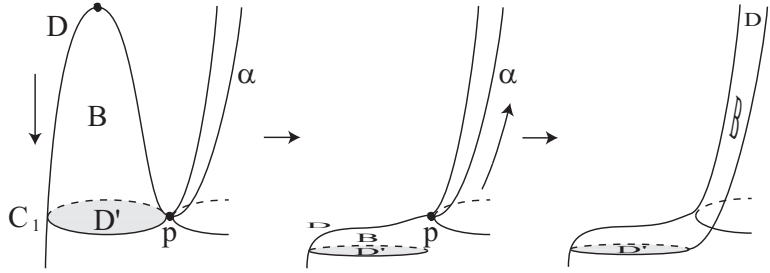}
	\end{center}
	\caption{An isotopy which cancels an inessential saddle point $p$}
	\label{isotopy}
\end{figure}

In case (b), we can isotope $F\cap h^{-1}([h(p)-\epsilon,0])$ vertically into the region above $S$ with fixing $\Gamma$ since $(M_0,\Gamma\cap M_0)$ has a product structure by the conditions (6), where $\epsilon$ is a sufficiently small positive real number.
\end{proof}

\subsection{A sufficient condition for spatial graph to have the representativity at least $n$}

The following lemma is useful to show that $r(F,\Gamma)\ge n$.

\begin{lemma}\label{n-connected}
Let $F$ be a closed surface of positive genus which separates $S^3$ into $V_1$ and $V_2$, and $\Gamma$ be a spatial graph which is contained in $F$.
Suppose that there exists a collection of essential disks $\mathcal{D}_i=\{D_{\lambda}^i\}$ in $V_i$ for $i=1,2$ whose boundary intersects $\Gamma$ transversely in the interior of edges such that for a collection $\mathcal{F}_i=\{F_{\mu}^i\}$ of the components of $cl(F-\bigcup_{\lambda} N(\partial D_{\lambda}^i))$,
\begin{enumerate}
\item for any essential loop $l$ in $F_{\mu}^i$, $|l\cap \Gamma|\ge n$, and
\item for any essential arc $a$ in $F_{\mu}^i$ such that $\partial a$ is contained in a single component of $\partial F_{\mu}^i$, $|a\cap \Gamma|\ge n/2$.
\end{enumerate}
%$\bigcup_{\lambda}D_{\lambda}^i$ cuts $V_i$ into a collection of 3-balls $\mathcal{B}_i=\{B_{\mu}^i\}$.
% and $|\partial D_{\lambda}^i\cap \Gamma|\ge n\ (\ge 2)$ for each $D_{\lambda}^i$.
%We construct a graph $G_{\mu}^i$ which is obtained from $\Gamma\cap \partial B_{\mu}^i$ and $\partial B_{\mu}^i\cap \bigcup_{\lambda}D_{\lambda}^i$ by regarding $\partial B_{\mu}^i\cap \bigcup_{\lambda}D_{\lambda}^i$ as vertices.
Then, $r(F,\Gamma)\ge n$.
\end{lemma}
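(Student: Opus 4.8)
The plan is to prove the equivalent statement that every compressing disk $D$ for $F$ in $S^3$ has $|\partial D\cap\Gamma|\ge n$; taking the minimum over $D$ then gives $r(F,\Gamma)\ge n$. Since $F$ separates $S^3$ into $V_1,V_2$ and $\partial D\subset F$, the disk $D$ lies in one of them, say $D\subset V_1$, and I would work with $\mathcal D_1=\{D^1_\lambda\}$ and $\mathcal F_1=\{F^1_\mu\}$; the case $D\subset V_2$ is symmetric. Put $\mathbb D=\bigcup_\lambda D^1_\lambda$, make $D$ transverse to $\mathbb D$, and among all modifications of $D$ supported in $V_1$ that fix $\partial D$ (which therefore never change $|\partial D\cap\Gamma|$) choose one minimizing $|D\cap\mathbb D|$.

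First I would carry out the standard clean-up in this minimal position. An innermost closed curve of $D\cap\mathbb D$ on $D$ bounds a subdisk of $D$ disjoint from $\mathbb D$ in its interior, and also bounds a subdisk of some $D^1_\lambda$; exchanging them (and pushing off) reduces $|D\cap\mathbb D|$ without touching $\partial D$, so $D\cap\mathbb D$ consists only of arcs, each with endpoints on $\partial D$. Next, let $R$ be a subdisk of $D$ cut off by an outermost arc $\beta\subset D\cap D^1_\lambda$ on $D$, with $\partial R=\beta\cup\gamma$ and $\gamma\subset\partial D$. Outermostness forces the interior of $\gamma$ to miss all cutting curves $\partial D^1_\nu$ (a crossing would push an arc of $D\cap\mathbb D$ into the interior of $R$), so $\gamma$ lies in the closure of one piece $F^1_\mu$ with $\partial\gamma$ on the boundary component of $F^1_\mu$ coming from $\partial D^1_\lambda$. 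If $\gamma$ were inessential in $F^1_\mu$, it would cut off a disk $E\subset F^1_\mu$ with $\partial E=\gamma\cup\varepsilon$ and $\varepsilon\subset\partial D^1_\lambda$; taking the subdisk $E'$ of $D^1_\lambda$ with $\partial E'=\beta\cup\varepsilon$, I would replace $R$ inside $D$ by $E\cup_\varepsilon E'$ pushed slightly into $V_1$. This is again an embedded disk with unchanged boundary $\partial D$, but, because the disks in $\mathcal D_1$ are disjoint and $F^1_\mu$ is disjoint from the interior of $\mathbb D$, it removes the arc $\beta$ and creates no new intersection with $\mathbb D$ --- contradicting minimality. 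Hence in minimal position the cut-off arc of every outermost subdisk of $D$ is an essential arc of some piece $F^1_\mu$ with both endpoints on a single boundary component.

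Now I would conclude. If $D\cap\mathbb D=\emptyset$ then $\partial D$ lies in the closure of a single piece $F^1_\mu$ (a curve sitting in a collar of a cutting curve is either inessential --- impossible for a compressing disk --- or parallel to that cutting curve). Since $\partial D$ bounds a disk in neither $F$ nor $F^1_\mu$, either it is essential and non-peripheral in $F^1_\mu$, in which case condition (1) gives $|\partial D\cap\Gamma|\ge n$, or it is parallel in $F$ to some $\partial D^1_\lambda$; in the latter peripheral sub-case I would delete $D^1_\lambda$ from $\mathcal D_1$, merging the two pieces of $\mathcal F_1$ on its two sides, and repeat --- after finitely many deletions $\partial D$ is non-peripheral in its piece, or $\mathcal D_1$ has been exhausted and condition (1) applies to $\partial D$ on all of $F$. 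If instead $D\cap\mathbb D\ne\emptyset$, then cutting $D$ along its arcs of intersection with $\mathbb D$ gives subdisks whose adjacency graph is a tree with at least one edge, hence at least two leaves; the two corresponding outermost subdisks have disjoint cut-off arcs $\gamma_1,\gamma_2\subset\partial D$, and by the previous paragraph each $\gamma_i$ is an essential arc of a piece with both endpoints on one boundary component, so condition (2) gives $|\gamma_i\cap\Gamma|\ge n/2$ and therefore $|\partial D\cap\Gamma|\ge|\gamma_1\cap\Gamma|+|\gamma_2\cap\Gamma|\ge n$.

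The step I expect to be the main obstacle is the minimal-position analysis of the second paragraph together with the peripheral sub-case: one must check carefully that the boundary-fixing surgery on an outermost subdisk really lowers $|D\cap\mathbb D|$ and introduces no new intersections --- this is exactly where disjointness of the $D^1_\lambda$ and the fact that the pieces $F^1_\mu$ avoid the interior of $\mathbb D$ are used --- and, in the peripheral sub-case, that deleting a cutting curve to which $\partial D$ is parallel preserves conditions (1) and (2) for the coarsened family of pieces, which I would verify by putting essential loops and arcs of the merged piece into minimal position with respect to the deleted curve and applying (2) to the resulting sub-arcs. Everything else is a routine innermost-disk/outermost-arc argument.
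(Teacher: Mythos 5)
Your proposal is correct and follows essentially the same route as the paper's own proof: minimize $|D\cap \bigcup_{\lambda}D_{\lambda}^1|$, eliminate circles and inessential outermost arcs by boundary-fixing cut-and-paste, and then either $\partial D$ is an essential loop in a single piece $F_{\mu}^1$ (condition (1)) or two outermost subdisks give disjoint boundary arcs each meeting $\Gamma$ at least $n/2$ times (condition (2)), so $|\partial D\cap\Gamma|\ge n$; the paper phrases this contrapositively but the content is identical. The only divergence is your peripheral sub-case with the deletion-and-merging detour, which is unnecessary (and only sketched) once ``essential loop'' is read, as the paper's applications require, to include loops parallel to the $\partial D_{\lambda}^1$, so that condition (1) already handles a $\partial D$ parallel to a cutting curve.
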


\begin{proof}
Suppose that $r(F,\Gamma)< n$ and without loss of generality there exists a compressing disk $D$ in $V_1$ for $F$ such that $|\partial D\cap \Gamma|< n$.
We assume that $|D\cap \bigcup_{\lambda}D_{\lambda}^1|$ is minimal.

%If $D\cap \bigcup_{\lambda}D_{\lambda}^1=\emptyset$, then $D$ is in a 3-ball $B_{\mu}^1$ and $\partial D$ separates the components of $\partial B_{\mu}^1\cap \bigcup_{\lambda}D_{\lambda}^1$ in $\partial B_{\mu}^1$.
%This shows that $G_{\mu}^1$ is not geometrically n-connected since $|\partial D\cap \Gamma|< n$.

If $D\cap \bigcup_{\lambda} D_{\lambda}^1=\emptyset$, then $\partial D$ is an essential loop in a component $F_{\mu}^1$ with $|\partial D\cap \Gamma|<n$.
This contradicts the condition (1).

Otherwise, let $\alpha$ be an arc of $D\cap \bigcup_{\lambda}D_{\lambda}^1$ which is outermost in $D$, $\delta$ be the corresponding outermost disk in $D$ bounded by $\alpha$, and put $\beta=\partial \delta-\text{int}\alpha$.
We may take $\alpha$ and $\delta$ so that $|\beta\cap \Gamma|< n/2$ since $|\partial D\cap \Gamma|< n$.
This contradicts the condition (2).
%Then $\delta$ is a disk in a 3-ball $B_{\mu}^1$ such that $|\partial \delta\cap G_{\mu}^1|< n/2 +1 \le n$. 
\end{proof}

\subsection{Constructing knots with the representativity exactly $n$}

\begin{lemma}\label{exactly}
For any Heegaard surface $F$ of genus $g\ge 1$ and for any integer $n\ge 2$, there exists a knot $K$ non-separatively contained in $F$ such that $r(F,K)=n$.
\end{lemma}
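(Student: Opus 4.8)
The plan is to combine two ingredients: Lemma \ref{n-connected}, which gives a sufficient condition for $r(F,K)\ge n$, and Theorem \ref{main2} together with an explicit bridge (string) number computation, which gives the matching upper bound $r(F,K)\le n$. So the strategy is to build, inside the genus-$g$ Heegaard surface $F$, a knot $K$ that simultaneously satisfies the intersection hypotheses of Lemma \ref{n-connected} for the given $n$ and admits a bridge position with $|K\cap S|=2n$, forcing $r(F,K)=n$ by the sandwich $n\le r(F,K)\le bs(K)/2 = n$.

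The construction I would use is a generalization of the torus-knot case (item (2) of Theorem \ref{knots}). Fix the standard Heegaard splitting $S^3=V_1\cup_F V_2$ of genus $g$, and pick meridian systems: a complete system of $g$ essential disks $\mathcal{D}_1$ in $V_1$ and $\mathcal{D}_2$ in $V_2$ whose boundaries form the standard symplectic basis of $H_1(F)$. Cutting $F$ along $\partial\mathcal{D}_1$ yields a single planar piece (a $2g$-punctured sphere in the genus-$g$ case, or just an annulus when $g=1$); similarly for $\partial\mathcal{D}_2$. Now draw $K$ on $F$ so that it runs $n$ times across each handle in a "cabling" or "$(p,q)$-curve" pattern chosen so that (i) every essential loop in each complementary piece $F^i_\mu$ meets $K$ in at least $n$ points, and (ii) every essential arc in $F^i_\mu$ with both endpoints on one boundary component meets $K$ in at least $n/2$ points. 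For $g=1$ this is precisely the $(p,q)$-torus knot with $\min\{p,q\}=n$; for higher genus one takes an analogous curve wrapping $n$ times longitudinally and meeting each meridian $n$ times, and one chooses the precise slopes to make $K$ connected and non-separating in $F$. Verifying (i) and (ii) is a direct count using the fact that an essential loop or arc in the cut-open surface must cross some $\partial D^i_\lambda$, hence must run through a band where $K$ has been laid down $n$ times in parallel; Lemma \ref{n-connected} then delivers $r(F,K)\ge n$.

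For the upper bound, I would exhibit a bridge position of $K$ realizing $bs(K)=2n$. Since $K$ is a curve on the Heegaard surface that meets each of the $g$ meridians of $V_1$ in exactly $n$ points, pushing $K$ slightly off $F$ and using the product structure of the handlebodies lets one isotope $K$ into a position where it has exactly $n$ maxima and $n$ minima with respect to a height function adapted to $F$ — essentially the "plat"-type bridge presentation coming from the $n$ strands crossing a chosen meridian disk. This gives $bs(K)\le 2n$, and then Theorem \ref{main2} gives $r(F,K)\le r(K)\le bs(K)/2\le n$. Combining, $r(F,K)=n$, and by construction $K$ is non-separating in $F$.

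The main obstacle I expect is the higher-genus case of the construction: ensuring simultaneously that (a) $K$ is connected and non-separating in $F$, (b) the counting conditions (1) and (2) of Lemma \ref{n-connected} hold for \emph{both} handlebodies' complementary pieces (not just one), and (c) the bridge number does not accidentally drop below $2n$. For $g=1$ all of this is classical (torus knots), so one natural route is to reduce the general case to $g=1$: take the genus-$1$ example $K_0$ on a standardly embedded torus $T\subset F$ (a torus bounding a solid torus on each side inside the handlebodies of $F$), i.e. put $K$ on a sub-Heegaard-torus. One must then check that a compressing disk for $F$ cannot do better than a compressing disk for $T$ — this is where the incompressibility / essentiality bookkeeping in Lemma \ref{n-connected}, applied with the meridian disks of the ambient genus-$g$ splitting, does the work. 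That reduction step — showing the ambient higher-genus handles cannot create a low-intersection compressing disk — is the part that will require the most care.
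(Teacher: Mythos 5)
Your lower-bound half is essentially the paper's argument: build $K$ on the genus-$g$ Heegaard surface from many parallel copies of meridian and longitude curves smoothed coherently, and apply Lemma \ref{n-connected} with the two meridian-disk systems (the paper does exactly this, with an explicit recipe, namely $n+1$ copies of $m_0$, $n$ of $m_1$, $\lceil n/2\rceil$ of the remaining $m_i$, and $\lceil n/2\rceil$ or $\lfloor n/2\rfloor$ copies of the $l_j$, and it gets non-separation for free because $\partial D_1^2$ meets $K$ in the odd number $2n+1$). The genuine gap is in your upper bound. You do not need, and should not try, to bound $bs(K)$: since $r(F,K)$ is by definition a minimum over compressing disks for $F$, it suffices to exhibit one essential disk whose boundary meets $K$ in exactly $n$ points, and the construction hands you this disk (in the paper it is $D_0^1$, bounded by $m_0$). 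The detour through ``$bs(K)\le 2n$ and Theorem \ref{main2}'' asserts something much stronger (a bound on $r(K)$ over \emph{all} surfaces) and is unjustified: a knot on a genus-$g\ge 2$ Heegaard surface that meets every meridian of $V_1$ in $n$ points need not admit an $n$-bridge position. For instance, with $g=2$, $n=2$, a curve running as a $(2,3)$-cable over each handle meets each meridian twice but is a connected sum of two trefoils, with bridge number $3$; so your claimed plat position with exactly $n$ maxima fails in general, and nothing in your sketch rules this out for the curves you need.

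Your fallback of reducing to the $g=1$ case by placing a torus knot on a sub-Heegaard torus cannot work either, and for a concrete reason rather than mere bookkeeping: if $K$ lies on a once-punctured-torus portion of $F$ and the remaining $g-1$ handles are attached away from $K$, then the meridian disk of any such extra handle is a compressing disk for $F$ disjoint from $K$, so $r(F,K)=0$. Thus the knot must wrap over \emph{every} handle of $F$ at least $n$ times to satisfy the hypotheses of Lemma \ref{n-connected}, which is exactly the tension you flag but do not resolve: once it wraps over every handle, the $2n$ bridge bound is the doubtful claim. Replacing your upper-bound paragraph by the one-line observation that the compressing disk bounded by a chosen meridian already meets $K$ in exactly $n$ points (which your construction can be arranged to guarantee, as in the paper) closes the gap and makes the bridge-number discussion unnecessary.
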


\begin{proof}
Let $F$ be a genus $g$ Heegaard surface, $m_0,m_1,\ldots,m_g$ be meridian loops and $l_0,l_1,\ldots,l_g$ be longitude loops on $F$ as Figure \ref{knot}.
We take $n+1$ parallel copies of $m_0$, $n$ parallel copies of $m_1$, $\lceil n/2 \rceil$ parallel copies of $m_i$ $(i=2,\ldots,g)$, $\lceil n/2 \rceil$ parallel copies of $l_0$, $\lfloor n/2 \rfloor$ parallel copies of $l_1$, $\lceil n/2 \rceil$ parallel copies of $l_j$ $(j=2,\ldots,g)$, where $\lceil x \rceil$ denotes the ceiling function of $x$ which is the smallest integer not less than $x$, and $\lfloor x \rfloor$ denotes the floor function of $x$ which is the largest integer not greater than $x$.
Then a knot $K$ is obtained from these parallel loops by smoothing the intersection in a same direction (which is denoted by $+$).
See Figure \ref{non-torus_fig} for a link $L=7m_0+7m_1+7m_2 + 2l_0+2l_1+2l_2$ on a genus $2$ Heegaard surface.
Thus we have
\begin{eqnarray*}
K & = & (n+1)m_0+nm_1+\lceil n/2 \rceil m_2+\cdots+\lceil n/2 \rceil m_g\\
& & + \lceil n/2 \rceil l_0 + \lfloor n/2 \rfloor l_1 + \lceil n/2 \rceil l_2 +\cdots + \lceil n/2 \rceil l_g
\end{eqnarray*}

\begin{figure}[htbp]
	\begin{center}
	\includegraphics[trim=0mm 0mm 0mm 0mm, width=.8\linewidth]{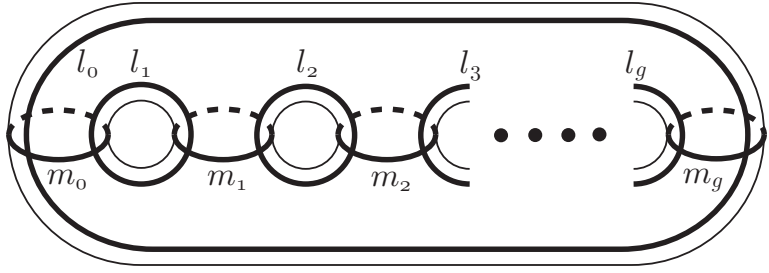}
	\end{center}
	\caption{meridian loops $m_i$ and longitude loops $l_j$ $(i,j=1,\ldots,g)$}
	\label{knot}
\end{figure}

We use Lemma \ref{n-connected} as follows.
Let $D_i^1$ $(i=0,\ldots,g)$ be an essential disk which is bounded by $m_i$ in a genus $g$ handlebody $V_1$ bounded by $F$, and $D_j^2$ $(j=0,\ldots,g)$ be an essential disk which is bounded by $l_j$ in a genus $g$ handlebody $V_2$ bounded by $F$.
By isotoping $D_i^1$ $(i=0,\ldots,g)$ and $D_j^2$ $(j=0,\ldots,g)$ slightly, we may assume that $\partial D_i^1$ $(i=0,1)$ intersects $K$ in $\lceil n/2 \rceil + \lfloor n/2 \rfloor =n$ points, $\partial D_i^1$ $(i=2,\ldots,g)$ intersects $K$ in $2 \lceil n/2 \rceil$ $(\ge n)$ points, $\partial D_0^2$ intersects $K$ in $n+1+\lceil n/2 \rceil$ points, $\partial D_1^2$ intersects $K$ in $2n+1$ points, $\partial D_2^2$ intersects $K$ in $n+\lceil n/2 \rceil$ points, $\partial D_j^2$ $(j=3,\ldots,g)$ intersects $K$ in $2\lceil n/2 \rceil$ points.

If we cut $F$ along $\partial D_i^1$ $(i=0,\ldots,g)$ and $\partial D_j^2$ $(j=0,\ldots,g)$, then we obtain $\mathcal{F}_1=\{F_+^1,F_-^1\}$ and $\mathcal{F}_2=\{F_+^2,F_-^2\}$ respectively each of which consists of two planar surfaces with $g+1$ boundary components.
By the construction, $K\cap F_{\pm}^1$ consists of $\lceil n/2 \rceil$ parallel arcs connecting $\partial D_0^1$ and $\partial D_g^1$, $\lfloor n/2 \rfloor$ parallel arcs connecting $\partial D_0^1$ and $\partial D_1^1$, $\lceil n/2 \rceil$ parallel arcs connecting $\partial D_i^1$ and $\partial D_{i+1}^1$ $(i=1,\ldots,g-1)$, and $K\cap F_{\pm}^2$ consists of $\lceil n/2 \rceil$ parallel arcs connecting $\partial D_0^2$ and $\partial D_g^2$, $n+1$ parallel arcs connecting $\partial D_0^2$ and $\partial D_1^2$, $n$ parallel arcs connecting $\partial D_1^2$ and $\partial D_2^2$, $\lceil n/2 \rceil$ parallel arcs connecting $\partial D_j^2$ and $\partial D_{j+1}^2$ $(j=2,\ldots,g-1)$.

Therefore, both conditions in Lemma \ref{n-connected} are satisfied for $\mathcal{F}_1=\{F_+^1,F_-^1\}$, namely (1) for any essential loop $l$ in $F_{\pm}^1$, $|l\cap K|\ge n$, and (2) for any essential arc $a$ in $F_{\pm}^1$ such that $\partial a$ is contained in a single component of $\partial F_{\pm}^1$, $|a\cap K|\ge n/2$.
And also, both conditions in Lemma \ref{n-connected} are satisfied for $\mathcal{F}_2=\{F_+^2,F_-^2\}$, namely (1) for any essential loop $l$ in $F_{\pm}^2$, $|l\cap K|\ge 2 \lceil n/2 \rceil$, and (2) for any essential arc $a$ in $F_{\pm}^2$ such that $\partial a$ is contained in a single component of $\partial F_{\pm}^2$, $|a\cap K|\ge \lceil n/2 \rceil$.

Hence by Lemma \ref{n-connected}, $r(F,K)\ge n$.
Moreover, since $\partial D_0^1$ intersects $K$ in $n$ points, we have $r(F,K)=n$.
Finally, since $\partial D_0^1$ intersects $K$ in $n$ points and $\partial D_1^2$ intersects $K$ in $2n+1$ points, $K$ is a non-separating loop in $F$.
\end{proof}

We need the following two theorems due to Fox (also Scharlemann--Thompson) and Bonahon to prove Theorem \ref{arbitrarily}.

\begin{theorem}[\cite{F}, {\cite[Theorem 7]{ST}}]\label{Fox}
A connected compact 3-dimensional submanifold of $S^3$ can be reimbedded in $S^3$ so that it is the complement of a union of handlebodies in $S^3$.
\end{theorem}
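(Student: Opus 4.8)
The plan is to establish the reimbedding result by combining a classical ``thickening plus absorbing handles'' argument with Alexander's theorem on $2$-spheres in $S^3$. Let $M$ be a connected compact $3$-submanifold of $S^3$, and set $W = \overline{S^3 - M}$, which is itself a compact $3$-manifold (not necessarily connected) with $\partial W = \partial M$. The goal is to produce an embedding $M \hookrightarrow S^3$ whose complement is a disjoint union of handlebodies. If $\partial M = \emptyset$ the statement is degenerate ($M = S^3$), so assume every component of $\partial M$ is a closed orientable surface of positive genus; the sphere components of $\partial M$ bound balls on one side by Alexander's theorem and can be capped off or ignored, reducing to this case.

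First I would pass to a handle decomposition of each complementary component $W_j$ built on a collar $\partial_j \times I$ of the boundary surface $\partial_j = \partial W_j$. Since $W_j$ is connected and compact, it is obtained from $\partial_j \times I$ by attaching $1$-handles and then $2$- and $3$-handles; the key reduction is to arrange that only $1$-handles are used, i.e.\ that $W_j$ is a compression body, and moreover a handlebody-with-collar. The standard device here is to \emph{trade} the higher-index handles: a $2$-handle of $W_j$ is dual to a $1$-handle, and by the classical handle-trading / innermost-disk arguments in $S^3$ one can cancel or slide these against the $1$-handles until $W_j$ deformation retracts to a wedge of circles relative to nothing, i.e.\ is a handlebody. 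Concretely, one uses that a $2$-handle attached along a curve $c$ in the boundary, if $c$ bounds a disk $D$ in $S^3$ meeting the rest of the handle structure minimally, allows a reimbedding (cutting along $D$, which lives in $S^3$) that removes the handle; this is exactly where the \emph{reimbedding} freedom — as opposed to isotopy — is essential, and it is the main obstacle in the argument, since one must track how each such surgery changes the complement of $M$ and verify it does not destroy the handlebody structure already achieved on other pieces.

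Next I would make this systematic by induction on the total number of non-$1$-handles across all $W_j$, using at each stage an outermost/innermost position for the relevant disk or sphere in $S^3$ relative to $\partial M$ (Alexander's theorem guarantees the needed disks and spheres). Each reimbedding move is supported in a ball disjoint from most of $M$, so it leaves $M$ unchanged as an abstract manifold while simplifying $W$; after finitely many moves every $W_j$ is a handlebody. Finally, one checks the resulting configuration: $M$ is embedded in $S^3$ with $S^3 - \mathrm{int}\,M = \bigsqcup_j W_j$ a disjoint union of handlebodies, which is the assertion. For the statement as phrased one also wants the handlebodies glued along all of $\partial M$, which is automatic since $\partial(S^3 - \mathrm{int}\,M) = \partial M$ componentwise.

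I would remark that this is Fox's re-embedding theorem (\cite{F}), and that the Scharlemann--Thompson proof (\cite[Theorem 7]{ST}) organizes exactly these handle-trading moves via thin position, which gives the cleanest route; for the purposes of this paper it suffices to cite it, and the sketch above is only to indicate why the statement is true and what makes it nontrivial — namely the interplay between the reimbedding freedom and Alexander's theorem controlling spheres and disks in $S^3$.
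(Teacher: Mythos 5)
The paper does not prove this statement at all: it is quoted as an external result, with the proof deferred to Fox \cite{F} and to Scharlemann--Thompson \cite[Theorem 7]{ST}, and it is then used as a black box in the proof of Theorem \ref{arbitrarily}. So citing it, as you note at the end, is all the paper requires. Judged as a proof attempt, however, your sketch has a genuine gap exactly at the step you flag as ``the main obstacle'': the removal of the $2$- and $3$-handles of the complementary pieces $W_j$. The attaching curve $c$ of such a $2$-handle is a curve in $S^3$ that may well be knotted, so it bounds no embedded disk in $S^3$; Alexander's theorem controls embedded $2$-spheres (and disks bounded by unknotted curves), not arbitrary attaching curves, so it does not supply the disk $D$ your handle-trading move needs. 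Even when a disk exists it will in general pass through $M$, and you give no mechanism for dealing with that intersection or for showing that the resulting surgery decreases your induction count. Since ``$W_j$ is a handlebody'' is precisely the conclusion of the theorem, asserting that handle trading ``cancels or slides'' the higher-index handles is assuming what is to be proved.

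There is also an internal inconsistency in the inductive step: you say each reimbedding move is ``supported in a ball disjoint from most of $M$, so it leaves $M$ unchanged \dots while simplifying $W$.'' A modification carried out away from $M$ does not change the embedding of $M$, hence cannot change its complement as a subset of $S^3$; conversely, any move that changes the homeomorphism type of $W$ must change the embedding of $M$. The actual proofs confront this head on: Fox's argument is a global simplification of the complementary domains in which $S^3$ is cut along spheres and disks and reglued, genuinely re-embedding $M$ (the standard example is a knotted solid torus, whose boundary is incompressible from the complement side, so no compression internal to $W$ is available and the solid torus itself must be re-embedded unknottedly); Scharlemann and Thompson instead put $\partial M$ in thin position with respect to a sweep-out and deduce that the complementary pieces can be taken to be handlebodies. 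Your outline gestures toward Fox's strategy but does not supply the device that makes the re-embedding step work, so as written it is a plausibility argument rather than a proof; for this paper the correct move is simply the citation.
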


We review on a characteristic compression body from \cite[page 243]{B}.
Let $M$ be an irreducible compact 3-manifold with boundary and let $\mathcal{D}\subset M$ be a collection of disjoint compression disks for $\partial M$.
If $V$ is the union of a regular neighbourhood $U$ of $\mathcal{D}\cup\partial M$ and of all the components of the closure $cl(M-U)$ that are 3-balls, then $V$ is a compression body for $\partial M$, where $\partial M\subset \partial V$ is the {\em exterior boundary} of $V$ which is denoted by $\partial_e V$, and $\partial V-\partial_e V$ is the {\em interior boundary} of $V$ which is denoted by $\partial_i V$.
A {\em characteristic compression body} $V$ of $M$ is a compression body constructed by the above procedures such that $M-\text{int}V$ is $\partial$-irreducible, namely, $\partial_i V$ is incompressible in $M-\text{int}V$.

\begin{theorem}[{\cite[Theorem 2.1]{B}}]\label{Bonahon}
An irreducible compact 3-manifold with boundary has a unique characteristic compression body.
\end{theorem}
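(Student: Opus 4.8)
The plan is to treat existence and uniqueness (up to ambient isotopy) separately. Existence is the routine half: beginning from a collar $\partial M\times I$, one repeatedly compresses the current interior boundary along a compressing disk and caps off any resulting $2$-sphere components with $3$-balls, absorbing those balls into the growing submanifold. This process terminates (by a standard finiteness argument, e.g.\ via Haken finiteness), and at termination the interior boundary is incompressible in the complement; at every stage the submanifold $N(\partial M\cup\mathcal{D})$ together with the absorbed balls is manifestly a compression body — a collar of $\partial M$ with $2$-handles and then $3$-handles attached — so the terminal object is a characteristic compression body.

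For uniqueness, let $V$ and $V'$ be characteristic compression bodies of $M$, with interior boundaries $F=\partial_i V$ and $F'=\partial_i V'$, noting that $\partial_e V=\partial_e V'=\partial M$. The two facts driving the argument are: $F$ is incompressible in the compression body $V$ (a standard property of compression bodies), and $F$ is incompressible in $\mathrm{cl}(M-V)$ (the defining property of a characteristic compression body); combined with innermost-disk arguments these force $F$ to be incompressible in all of $M$, and symmetrically for $F'$. I would first isotope $F'$ to meet $F$ transversely in the fewest possible circles. If a circle survives, pick one, $c$, innermost on $F'$ and bounding a disk $\delta\subset F'$ with $\mathrm{int}\,\delta\cap F=\emptyset$; then $\delta\subset V$ or $\delta\subset\mathrm{cl}(M-V)$, and in either case the appropriate incompressibility of $F$ makes $c$ bound a disk on $F$. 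Capping $\delta$ off along $F$ produces a $2$-sphere, which bounds a ball since $M$ is irreducible, and pushing $F'$ across this ball lowers $|F\cap F'|$ — a contradiction. Hence $F\cap F'=\emptyset$.

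At this point $F$ and $F'$ are disjoint closed incompressible surfaces. I would fix a complete meridian disk system $\Delta$ for $V$, use innermost-circle moves (legitimate because $F'$ is incompressible in $M$) to make $F'$ disjoint from $\Delta$, and then locate $F'$ inside $M\setminus N(\Delta)$, which is a collar $\partial M\times I$ together with some balls and the $\partial$-irreducible piece $\mathrm{cl}(M-V)$. The classification of closed incompressible surfaces in a compression body — each such surface is isotopic to a disjoint union of copies of interior boundary components — applied on the $V$ side, together with the symmetric analysis using a disk system for $V'$, shows that $F$ and $F'$ cobound a product region and are parallel in $M$, and that this product carries $V'$ onto $V$; a collar-uniqueness argument then upgrades the identification to an ambient isotopy of $S^3$. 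As in the constructions recalled in the text, the whole argument is most naturally phrased directly in terms of the defining disk systems $\mathcal{D}$ and $\mathcal{D}'$, where in addition to innermost-circle moves one performs outermost-arc moves along $\partial M$ on the arc components of $\mathcal{D}\cap\mathcal{D}'$.

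The step I expect to be the main obstacle is precisely the last, rigidity, step: having made the two interior boundaries disjoint, one must exclude the possibility that the two compression bodies are properly nested without being isotopic. This is the genuine content of the theorem, and it requires both the classification of incompressible surfaces inside a compression body and the simultaneous use of the characteristic property of $V$ and of $V'$; the bookkeeping around disconnected boundary, components of $F$ and $F'$ lying on opposite sides of one another, and verifying that the complementary cobordism is an honest product rather than a twisted interval bundle is where the care has to go.
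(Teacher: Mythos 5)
This theorem is not proved in the paper at all: it is quoted verbatim from Bonahon \cite[Theorem 2.1]{B} as background, so there is no internal argument to measure your proposal against; it has to stand on its own. Your existence half is fine in outline (collar, compress, cap spheres, terminate by a finiteness argument). The problem is the uniqueness half, where you yourself flag the final rigidity step as ``the main obstacle'' and then stop. That step is not bookkeeping; it is the entire content of the theorem, so as written the proposal has a genuine gap. Concretely, after you have isotoped $F=\partial_i V$ and $F'=\partial_i V'$ to be disjoint, two things remain unaddressed: (i) components of $F'$ may lie in $\mathrm{cl}(M-V)$ rather than in $V$, where your appeal to the classification of closed incompressible surfaces \emph{in a compression body} simply does not apply, and you give no mechanism for pushing them into $V$; (ii) even for the components lying in $V$, parallelism of each component of $F'$ to \emph{some} component of $F$ does not yet give that $V'$ is isotopic to $V$ --- one must exclude nesting (several components of $F'$ parallel to one component of $F$, or $V'$ properly contained in $V$ after isotopy) and verify that the region between the two interior boundaries is a product carrying $V'$ onto $V$. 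The usual way to close this gap is the one you only gesture at in your last sentence: isotope a complete meridian system $\mathcal{D}'$ of $V'$ off $F$ (possible because $\partial\mathcal{D}'\subset\partial M\subset V$ and $F$ is incompressible in $M$), conclude $V'$ can be isotoped into $V$, and then use the characteristic property of $V'$ (incompressibility of $F'$ in $\mathrm{cl}(M-V')$, which contains $\mathrm{cl}(V-V')$) together with the symmetric statement for $V$ to show $\mathrm{cl}(V-V')$ is a product $F\times I$; without this the ``properly nested but not isotopic'' possibility is not excluded.

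Two smaller points. First, your closing phrase ``ambient isotopy of $S^3$'' is a slip: the theorem is about an abstract irreducible compact $3$-manifold $M$ with boundary, and the isotopy must take place in $M$; the paper only later applies the result to submanifolds of $S^3$, and there the reembedding is handled separately by Fox's theorem, not by Bonahon's. Second, when you derive incompressibility of $F$ in $M$ from incompressibility in $V$ and in $\mathrm{cl}(M-V)$, you should note that $F$ has no sphere components (spheres are absorbed as $3$-balls in the construction and $M$ is irreducible), since otherwise the innermost-disk argument and the later parallelism statements need extra care.
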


\begin{lemma}\label{non-separating}
Let $G$ be a graph with $g(G)\ge 1$ which is contained in a closed surface $F$ of genus $g(G)$.
Then there exists a cycle $C$ of $G$ which is non-separating in $F$.
\end{lemma}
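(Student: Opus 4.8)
The plan is to reduce to the case of a \emph{cellular} embedding of $G$ in $F$ (every component of $F-G$ an open disk) and then argue with $\Bbb{Z}/2\Bbb{Z}$-homology.

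First I would show that, thanks to the minimality of $g(G)$, the given embedding $G\subset F$ may be taken to be cellular. If some component $R$ of $F-G$ is not a disk, then $R$ carries a simple closed curve $\gamma$ that is essential in $R$ (bounds no disk in $R$, not $\partial$-parallel), and $\gamma\cap G=\emptyset$. Compressing $F$ along $\gamma$ (cut along $\gamma$, cap the two new circles with disks) gives a closed orientable surface $F'$ still containing $G$ with $\chi(F')=\chi(F)+2$. If $\gamma$ were non-separating in $F$, or non-separating in the component of $F$ it lies in once we are in the disconnected stage, the total genus of the ambient surface would drop by one while $G$ still embeds, contradicting that $g(G)$ is the minimal genus; so each compression merely splits off a component, which by additivity of the graph genus over components of $G$ together with minimality must be a $2$-sphere and can be discarded. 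Thus the only reductions that occur preserve the total genus, and the number $\beta_1(F-G)$ of independent loops carried by the faces strictly decreases, so after finitely many steps $G$ is cellularly embedded in a closed orientable surface $F''$ of total genus $g(G)\ge 1$. Finally, a cycle of $G$ that is non-separating in a component of $F''$ is non-separating in the original $F$, because a cycle of $G$ that separates $F$ also separates the component of any such compression containing it (compressing along a curve disjoint from both $G$ and that cycle cannot undo a separation) — this is an easy induction over the sequence of compressions.

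So assume $G\subset F$ is cellular with $g(F)=g(G)\ge 1$. Up to homotopy $F$ is obtained from a regular neighbourhood of $G$ (hence from $G$) by attaching $2$-cells, and attaching $2$-cells can only kill first homology, so the inclusion induces a surjection $H_1(G;\Bbb{Z}/2\Bbb{Z})\to H_1(F;\Bbb{Z}/2\Bbb{Z})$, the target being nonzero since $g(F)\ge 1$ (if $F$ is disconnected at this stage, work inside a component of positive genus). The cycle space of $G$ generates $H_1(G;\Bbb{Z}/2\Bbb{Z})$ — e.g.\ fundamental cycles with respect to a spanning forest form a basis — so some cycle $C$ of $G$ has nonzero image in $H_1(F;\Bbb{Z}/2\Bbb{Z})$. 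Since a simple closed curve on a closed orientable surface is null-homologous mod $2$ if and only if it separates, $C$ is a non-separating cycle of $G$ in $F$.

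The main obstacle is the first paragraph: making precise that minimality of $g(G)$ forbids a non-cellular embedding, i.e.\ that compression along an essential curve in a non-disk face cannot merely ``waste'' genus, and keeping track of the (possibly disconnected) intermediate surfaces when $G$ itself is disconnected and of how a non-separating cycle survives being pulled back through the compressions. The homological step is routine once cellularity is available. (It is also worth checking whether the intended argument instead invokes the characteristic compression body of the complementary side, i.e.\ Theorem~\ref{Bonahon}, to organise this reduction; either way the combinatorial heart is the same.)
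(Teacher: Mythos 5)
Your proposal is correct in substance but follows a genuinely different route from the paper. The paper's proof contracts a spanning tree $T$ of $G$ to get a bouquet $G/T\subset F$, \emph{asserts} that every component of $F-(G/T)$ is an open disk because $g(F)=g(G)$ (i.e.\ exactly the cellularity fact your first paragraph sets out to prove), and then, assuming every loop of the bouquet separates, takes an outermost essential loop and observes that it cuts off a once-punctured positive-genus piece which must contain a non-disk face, a contradiction; no homology and no compression bodies (in particular no appeal to Theorem \ref{Bonahon}) are used. Your version replaces the outermost-loop argument by the $\Bbb{Z}/2\Bbb{Z}$-homology argument (cellularity gives a surjection $H_1(G)\to H_1(F)$, so some fundamental cycle is non-zero in $H_1(F;\Bbb{Z}/2\Bbb{Z})$ and hence non-separating), which is cleaner, needs no contraction, and handles disconnected $G$ (spanning forest) painlessly; on the other hand you spell out the cellularity reduction that the paper leaves as a one-line assertion, which is a genuine addition. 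Two local repairs are needed in that reduction. First, drop the requirement that $\gamma$ be non-$\partial$-parallel: an annulus or pair-of-pants face contains no simple closed curve that neither bounds a disk nor is $\partial$-parallel, so as literally stated your process can stall; requiring only that $\gamma$ bound no disk in the face suffices, and compressing along such a curve still strictly decreases the total first Betti number of the faces, so the induction terminates. Second, the claim that a separating compression ``splits off a $2$-sphere'' is only forced by minimality when the split-off piece contains no part of $G$; when $G$ is disconnected both sides may carry positive genus and both must be retained -- your disconnected bookkeeping and the pull-back argument (a cycle separating the current component stays separating after each compression, since $\gamma$ is disjoint from $G$ and from the cycle) do cover this, but the sentence as written overstates it.
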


\begin{proof}
We take a spanning tree $T$ of $G$.
By contracting $T$, we obtain a bouquet $G/T$ in $F$.
We remark that any component of $F-(G/T)$ is an open disk since the genus of $F$ coincides with $g(G)$.
Suppose that any loop of $G/T$ is separating in $F$ and let $l$ be an outermost essential loop of $G/T$ in $F$.
Then $l$ cuts off a once puctured surface of genus $g\ge 1$ which contains a non-open disk region of $F-(G/T)$, a contradiction.
Hence $G/T$ has at least one non-separating loop and the corresponding cycle $C$ of $G$ is non-separating in $F$.
\end{proof}

\subsection{Primitive spatial graphs}

We say that a spatial graph $\Gamma$ is {\em free} if the fundamental group $\pi_1(S^3-\Gamma)$ is free.

%A spatial graph $\Gamma=f(G)$ of $G$ in $S^3$ is said to be {\em flat} if for every cycle $C$ of $G$, there exists a disk in $S^3$ internally disjoint from $f(G)$ whose boundary is $f(C)$.
%In \cite{RST}, Robertson, Seymour and Thomas showed that a spatial graph $\Gamma$ is flat if and only if for any subgraph $H\subset G$, $f(H)$ is free.

A spatial graph $\Gamma$ is said to be {\it primitive} if for each component $\Gamma_i$ of $\Gamma$ and any spanning tree $T_i$ of $\Gamma_i$, the bouquet $\Gamma_i/T_i$ obtained from $\Gamma_i$ by contracting all edges of $T_i$ is trivial.
In \cite{OT2}, we showed that a spatial graph $\Gamma$ is primitive if and only if for any connected spatial subgraph $H\subset \Gamma$, $H$ is free.

We say that a spatial graph $\Gamma$ is {\em minimally knotted} ({\em almost trivial}) if any proper spatial subgraph of $\Gamma$ is trivial, but $\Gamma$ itself is not trivial.

%From the above mentioned, a flat spatial graph is also primitive, but the converse is not true.
%However, if a graph has no disjoint cycles, then any primitive spatial graph of the graph is also flat (\cite[Theorem 1.11]{OT2}).

%\begin{proposition}\label{primitive}
%Let $G$ be a connected graph and $\Gamma=f(G)$ be a spatial graph of $G$.
%If $\Gamma$ is primitive, then $r(\Gamma)\le \beta_1(G)$, where $\beta_1(G)$ denotes the first Betti number of $G$.
%\end{proposition}

%This proposition shows that if $r(\Gamma)>\beta_1(G)$, then $\Gamma$ is not primitive and it contains a non-free connected subgraph $\Gamma'\subset \Gamma$.

\begin{lemma}\label{primitive}
If $\Gamma$ is primitive, then $r(\Gamma)\le \beta_1(G)$.
\end{lemma}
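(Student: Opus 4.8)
The plan is to build, from the primitivity hypothesis, an explicit compressing disk $D$ for any given closed surface $F \supset \Gamma$ whose boundary meets $\Gamma$ in at most $\beta_1(G)$ points; this immediately gives $r(F,\Gamma)\le\beta_1(G)$ and hence $r(\Gamma)\le\beta_1(G)$. The key observation is that primitivity lets us realize $\Gamma$ inside $F$ in a very controlled way: after contracting a spanning forest of each component, each component $\Gamma_i$ becomes a trivial bouquet, so it sits in a $2$-sphere. The number of loops in $\Gamma_i/T_i$ is $\beta_1(G_i)$, and $\sum_i \beta_1(G_i) = \beta_1(G)$.

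First I would reduce to the case where $\Gamma$ is connected: a compressing disk for $F$ only needs to meet one component of $\Gamma$ nontrivially, so it suffices to find one whose boundary meets a single component $\Gamma_i$ in at most $\beta_1(G_i)\le\beta_1(G)$ points, provided its boundary is essential in $F$. (One must check the disk can be chosen with essential boundary — this is where positive genus of $F$ is used, and where a small argument is needed to rule out the boundary being inessential.) Second, fix a component $\Gamma_i$ and a spanning tree $T_i$, and consider the bouquet $B_i=\Gamma_i/T_i$, which by primitivity is a trivial bouquet, hence contained in a $2$-sphere $\Sigma$. A regular neighborhood $N(T_i)$ in $F$ is a disk, and the wedge point of $B_i$ pushed off $\Sigma$ bounds a small disk; more to the point, each of the $\beta_1(G_i)$ loops of $B_i$ bounds a disk on one side of $\Sigma$, and a meridian disk of the handle that each such loop "creates" is a compressing disk for $F$ meeting $\Gamma_i$ transversally in exactly the number of times that loop passes a fixed cut point. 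The third and central step is to show one can arrange the picture so that \emph{some} loop of $B_i$ — equivalently, some cycle of $\Gamma_i$ through the tree $T_i$ — is dual to a compressing disk of $F$ meeting $\Gamma$ at most $\beta_1(G_i)$ times.

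I expect the main obstacle to be precisely this last step: passing from ``$\Gamma_i$ lies in a $2$-sphere after contracting $T_i$'' to ``$F$ has a compressing disk hitting $\Gamma$ few times.'' The surface $F$ is given to us, not constructed, so one cannot simply take the standard $2$-sphere of the trivial bouquet — one must relate $F$ to $\Gamma$'s position. The natural route is: since $\Gamma\subset F$ and some cycle $C$ of $\Gamma$ is non-separating in $F$ (cf. Lemma \ref{non-separating}, applied after noting $g(F)\ge g(\Gamma_i)\ge 1$, or a direct Mayer–Vietoris/homology count on $F$), there is a dual curve $C^\ast$ on $F$ meeting $C$ once; one would like $C^\ast$ to bound a compressing disk on one side. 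Primitivity/freeness of $\pi_1(S^3-H)$ for connected subgraphs $H\subset\Gamma$ is what guarantees that $S^3\setminus\Gamma$ is a handlebody-like complement, so that the natural cut system of $F$ corresponding to a spanning tree together with the $\beta_1$ remaining cycles can be realized by actual compressing disks, at least on one side — and the worst of those disks meets $\Gamma$ at most $\beta_1(G_i)$ times. I would carry this out by choosing $F$'s compression body structure (Theorem \ref{Bonahon}) on the complementary side and checking that the exterior-boundary disks of that compression body can be isotoped to meet $\Gamma$ controlled numbers of times, using that each loop of the bouquet contributes exactly one to the relevant algebraic intersection. Minimality of $|\partial D\cap\Gamma|$ among such disks, as in the proof of Lemma \ref{n-connected}, then closes the argument.
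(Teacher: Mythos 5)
There is a genuine gap, and it sits exactly where you predicted it would: the passage from ``$\Gamma$ is primitive'' to ``the \emph{given} surface $F$ has a compressing disk meeting $\Gamma$ at most $\beta_1(G)$ times'' is never actually carried out. The steps you invoke to bridge it do not hold as stated: a curve $C^\ast$ on $F$ dual to a non-separating cycle of $\Gamma$ need not bound a disk on either side of $F$ (F is an arbitrary closed surface, not a Heegaard surface); primitivity does \emph{not} make $S^3-\Gamma$ ``handlebody-like'' (it only says every connected subgraph has free complement group, which does not control the complement of $F$); and there is no reason the disks of a characteristic compression body for one side of $F$ meet $\Gamma$ in at most $\beta_1(G)$ points --- nothing ties those disks to the bouquet structure of $\Gamma/T$. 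Even the preliminary reduction is shaky: a compressing disk whose boundary meets one component $\Gamma_i$ in few points could still meet the other components, so bounding $|\partial D\cap\Gamma_i|$ alone does not bound $|\partial D\cap\Gamma|$. In effect your plan requires re-proving, ad hoc for each $F$, the sweep-out statement that the paper has already isolated, and that re-proof is missing.

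The paper's argument is much shorter and avoids touching $F$ at all: primitivity implies that for a spanning tree (forest) $T$, the exterior tangle $(E(T),\Gamma\cap E(T))$, $E(T)=S^3-\mathrm{int}\,N(T)$, is a trivial $\beta_1(G)$-string tangle, so $\partial N(T)$ is a bridge tangle decomposing sphere with $|\Gamma\cap\partial N(T)|=2\beta_1(G)$; hence $bs(\Gamma)\le 2\beta_1(G)$, and Theorem \ref{main2} gives $r(\Gamma)\le bs(\Gamma)/2\le\beta_1(G)$. All the interaction with an arbitrary surface $F\supset\Gamma$ (Morse bridge position, elimination of inessential saddles, the innermost-disk surgery producing a compressing disk meeting $\Gamma$ in at most $|\Gamma\cap S^2|/2$ points) is packaged once and for all in Theorem \ref{main2}. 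If you want to salvage your approach, the efficient fix is to stop trying to build the compressing disk by hand and instead convert primitivity into the bridge position above, then cite Theorem \ref{main2}.
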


\begin{proof}
If $\Gamma$ is primitive, then $(E(T),\Gamma\cap E(T))$ is a trivial $\beta_1(G)$-string tangle for any spanning tree $T$ of $\Gamma$, where $E(T)=S^3-\text{int}N(T)$.
Thus $(N(T), \Gamma\cap N(T))\cup (E(T),\Gamma\cap E(T))$ is a bridge tangle decomposition for $(S^3,\Gamma)$ and $\partial N(T)$ is a bridge tangle decomposing sphere for $\Gamma$ such that $|\Gamma\cap \partial N(T)|=2\beta_1(G)$.
Hence by Theorem \ref{main2}, $r(\Gamma)\le \beta_1(G)$.
\end{proof}

\begin{lemma}\label{not primitive}
If $\Gamma$ is not primitive, then $\Gamma$ contains a connected totally knotted spatial subgraph.
\end{lemma}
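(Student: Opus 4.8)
The plan is to locate a connected totally knotted spatial subgraph by taking a \emph{minimal} non-free connected spatial subgraph of $\Gamma$ and showing that minimality forces it to be totally knotted. Since $\Gamma$ is not primitive, the characterization in \cite{OT2} provides a connected spatial subgraph of $\Gamma$ that is not free; choose $H$ to be one that is minimal with respect to inclusion, so that every proper connected spatial subgraph of $H$ is free. As a tree is free, $\beta_1(H)\ge 1$, so $N(H)$ is a handlebody of positive genus and $E(H):=S^3-\operatorname{int}N(H)$ is irreducible. It then suffices to show that $\partial N(H)$ is incompressible in $E(H)$, since $H$ itself would then be a connected totally knotted spatial subgraph of $\Gamma$. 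Assume instead that there is a compressing disk $D$ for $\partial N(H)$ in $E(H)$, so $\partial D$ is essential on $\partial N(H)$; I would argue by cases according to whether $\partial D$ bounds a disk in $N(H)$.

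Case 1: $\partial D$ bounds a disk $D_0$ in $N(H)$. Then $D_0$ is essential in the handlebody $N(H)$, hence meets its spine $H$; take $D_0$ with $|D_0\cap H|=k$ minimal, with the intersection points in edge interiors. The $2$-sphere $\Sigma=D\cup D_0$ then meets $H$ transversely in $k\ge 1$ points and separates $S^3$ into balls $B_1$ and $B_2$. When $k=1$, letting $H_j$ denote the part of $H$ lying in $B_j$ (a proper connected spatial subgraph of $H$ after trivially completing the severed edge), van Kampen's theorem — the overlap $\Sigma-H$ being simply connected — gives $\pi_1(S^3-H)\cong\pi_1(S^3-H_1)*\pi_1(S^3-H_2)$; since a free product of free groups is free and $\pi_1(S^3-H)$ is not free, one of $H_1,H_2$ is not free, contradicting the minimality of $H$. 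When $k\ge 2$ I would first isotope $\Sigma$ to meet $H$ in the fewest possible points and then use the minimality of $H$ to see that one of the tangles $(B_j,\,H\cap B_j)$ is trivial; absorbing that trivial tangle returns us to the case $k=1$, and the same contradiction follows.

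Case 2: $\partial D$ does not bound a disk in $N(H)$, so $\partial D$ is longitudinal. Here the aim is to recover from $D$ an edge of $H$ that may be deleted or split off while preserving non-freeness. I would analyze, after possibly replacing $H$ by an ambient isotopic spine of $N(H)$, how $\partial D$ meets the decomposition of $\partial N(H)$ into vertex regions and edge tubes, so as to locate an edge $e$ of $H$ that is trivial — its tube in $N(H)$ being boundary-parallel in the exterior — whence $N(H)\cong N(H-e)$ and so $E(H)\cong E(H-e)$. If $e$ is not a bridge, $H-e$ is a proper connected spatial subgraph of $\Gamma$ with $\pi_1(S^3-(H-e))\cong\pi_1(S^3-H)$ not free, contradicting minimality; if $e$ is a bridge, $H-e$ is a split union $H_1\sqcup H_2$ and $\pi_1(S^3-H)\cong\pi_1(S^3-H_1)*\pi_1(S^3-H_2)$, so again one factor — hence some proper connected spatial subgraph of $\Gamma$ — is not free. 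In every case we obtain a contradiction, so $\partial N(H)$ is incompressible in $E(H)$, completing the proof.

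I expect Case 2 to be the main obstacle. The boundary of a compressing disk need not be the meridian of a single edge — it can wind around several handles of $N(H)$ — so extracting a concrete edge to remove is delicate and may genuinely require changing the spine of $N(H)$ first; the hardest sub-case is when $\partial D$ represents a non-primitive element of $\pi_1(N(H))$, where the naive edge deletion is unavailable and one presumably has to iterate the reduction (or run a more careful structural analysis of $E(H)$). A secondary difficulty is the bookkeeping in Case 1 for $k\ge 2$: one must rule out that $\Sigma$ is an essential reducing sphere for $(S^3,H)$ with two nontrivial sides, where once more the minimality of $H$ should be decisive but needs care.
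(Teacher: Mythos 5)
Your overall strategy is not the paper's, and as it stands it has a genuine gap at its core. The paper's proof is a short reduction: if $\Gamma$ is not primitive then, by definition, some spanning tree $T$ gives a non-trivial bouquet $\Gamma/T$; a minimal non-trivial sub-bouquet $\Gamma_0/T_0$ is then \emph{minimally knotted}, and the theorem of \cite{OT} (``minimally knotted spatial graphs are totally knotted'') is invoked; finally, since contracting the tree $T_0$ does not change the regular neighborhood, hence not the exterior, the corresponding subgraph $\Gamma_0\subset\Gamma$ is totally knotted. You instead take a minimal \emph{non-free} connected subgraph $H$ (via the \cite{OT2} characterization) and try to prove directly that $\partial N(H)$ is incompressible in $E(H)$. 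This amounts to re-proving (a variant of) the Ozawa--Tsutsumi theorem from scratch, and your argument does not get there: in Case 2 (compressing disks whose boundary does not bound in $N(H)$) you only sketch a search for a ``trivial edge'' to delete and you yourself flag that the sub-case where $\partial D$ is non-primitive in $\pi_1(N(H))$ is unresolved; in Case 1 with $k\ge 2$ the claim that minimality of $H$ forces one side of the sphere $\Sigma$ to be a trivial tangle is asserted, not proved. These are exactly the hard points, and they are the content of the cited theorem, which is a full paper in its own right.

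A further structural problem is that you cannot simply patch your argument by quoting \cite{OT}: your $H$ is minimal with respect to \emph{freeness}, and freeness of all proper subgraphs is strictly weaker than triviality, so $H$ need not be minimally knotted and the cited theorem does not apply to it. (There is also a smaller issue in Case 1 with $k=1$: the pieces $H_j$ obtained by capping the severed edge are not literally subgraphs of $\Gamma$, so contradicting minimality requires an extra step, e.g.\ deleting the punctured edge and comparing fundamental groups; this is fixable but unaddressed.) The clean repair is the paper's route: pass to the contracted bouquet $\Gamma/T$, where every proper subgraph is a sub-bouquet, extract a minimal non-trivial one, apply \cite{OT}, and transfer total knottedness back through the tree contraction.
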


\begin{proof}
If $\Gamma$ is not primitive, then there exists a spanning tree $T$ of $\Gamma$ such that $\Gamma/T$ is a non-trivial bouquet.
Then by taking a minimal non-trivial subgraph $\Gamma_0/T_0$ of $\Gamma/T$, we have a minimally knotted subgraph $\Gamma_0/T_0\subset \Gamma/T$.
Since a minimally knotted spatial graph is totally knotted (\cite{OT}), $\Gamma_0/T_0$ is totally knotted.
Then the corresponding spatial subgraph $\Gamma_0$ is a connected totally knotted spatial subgraph of $\Gamma$.
\end{proof}

\section{Proofs of Theorems}

%For an introduction to Morse theory, \cite{Ma} is recommendable.
%A general position and cut and paste argument can be seen in \cite[Chapter 2]{Cr}.

\begin{proof} (Theorem \ref{main})
Let $F$ be a 2-sphere which contains $\Gamma$.
By Lemma \ref{lemma1} and \ref{lemma2}, we may assume that a pair $(F,\Gamma)$ is in a Morse bridge position and $F$ has only two critical points.
This shows that $F$ intersects $S^2$ in a single loop.
\end{proof}

\begin{proof} (Theorem \ref{main2})
Let $(B_+,\Gamma_+)\cup_{S^2}(B_-,\Gamma_-)$ be a bridge tangle decomposition of $\Gamma$ such that $|\Gamma\cap S^2|=bs(\Gamma)$.
Let $F$ be any closed surface of positive genus containing $\Gamma$.
By Lemma \ref{lemma1} and \ref{lemma2}, we may assume that a pair $(F,\Gamma)$ is in a Morse bridge position and $F$ has no inessential saddle point.
Since $F$ has a positive genus, there exists an essential saddle point $p$.
This shows that there exists a regular value $t\in [c_-,c_+]$ such that $h^{-1}(t)\cap F$ contains at least two essential loops in $F$.

Let $C$ be a loop of $h^{-1}(t)\cap F$ which is essential in $F$ and innermost in $h^{-1}(t)$.
Since $h^{-1}(t)$ is a 2-sphere, we can take this loop $C$ so that $|C\cap \Gamma|\le |\Gamma \cap h^{-1}(t)|/2$.
Let $D$ be the corresponding innermost disk in $h^{-1}(t)$ bounded by $C$.
Since $C$ is innermost in $h^{-1}(t)$, any loop of $\text{int}D\cap F$ is inessential in $F$.

Let $\alpha$ be a loop of $\text{int}D\cap F$ which is innermost in $F$, and $\delta$ be the disk in $F$ bounded by $\alpha$.
Then by cutting and pasting $D$ along $\delta$, we have a new disk $D'$ such that $|\text{int}D'\cap F|<|\text{int}D\cap F|$.
Eventually we have $\text{int}D\cap F=\emptyset$, then $D$ is a compressing disk for $F$ which shows that
\[
\displaystyle r(F,\Gamma)\le |\Gamma\cap \partial D|\le \frac{|\Gamma\cap S^2|}{2}= \frac{bs(\Gamma)}{2}.
\]
\end{proof}

\begin{proof} (of Theorem \ref{arbitrarily})
Let $F$ be a closed surface in $S^3$ with $g(F)\ge g(G)$, and put $S^3=M_1\cup_F M_2$.

First by using Theorem \ref{Bonahon}, we take a characteristic compression body ({\em i.e.} maximal compression body) $V_i$ for $F$ in $M_i$ $(i=1,2)$.
Thus $\partial_e V_i$ coincides with $F$ and $\partial_i V_i$ is incompressible in $M_i-\text{int}V_i$, where $\partial_e V_i$ denotes the exterior boundary of $V_i$ and $\partial_i V_i$ denotes the interior boundary of $V_i$.

Next by using Theorem \ref{Fox}, we reimbed $V_1\cup_F V_2$ into $S^3$ so that the image $V_1'\cup_{F'} V_2'$ is the closed complement of a union of handlebodies embedded in $S^3$.
Since $V_i'\cup_{\partial_i V_i'}$(a union of handlebodies), which is denoted by $M_i'$, is a handlebody, $F$ becomes a Heegaard surface $F'$ in the new $S^3=M_1'\cup_{F'}M_2'$.

By Lemma \ref{exactly}, there exists a knot $K'$ in $F'$ which is non-separating in $F'$ and satisfies $r(F',K')=n$ for a given integer $n$.
%Moreover by Lemma \ref{non-separating}, there exists a spatial graph $\Gamma'$ contained in $F'$ such that $\Gamma'$ contains $K'$ as a non-separating cycle.

We construct a spatial graph $\Gamma'$ of $G$ contained in $F'$ such that $\Gamma'$ contains $K'$ as a non-separating cycle as follows.
Let $G$ be a non-planar graph embedded in a closed orientable surface $F_0$ with $g(F_0)=g(G)$.
Then, by Lemma \ref{non-separating}, there exists a cycle $C$ of $G$ which is non-separating in $F$.
We take a connected sum of $F_0$ and $F_1$, where $F_1$ is a closed orientable surface of genus $g(F')-g(F_0)$.
Let $\phi:F_0\# F_1\to F'$ be a homeomorphism such that $\phi(C)=K'$, and put $\Gamma'=\phi(G)$.
Then we obtain a desired triple $F'\supset \Gamma'\supset K'$, where $K'$ is also a cycle of $\Gamma'$.

\begin{center}
$S^3=M_1\cup_F M_2\supset V_1\cup_F V_2\supset F\supset \Gamma\supset K$

$\downarrow$ Fox's reimbedding

$S^3=M_1'\cup_{F'} M_2'\supset V_1\cup_{F'} V_2\supset F'\supset \Gamma'\supset K'$
\end{center}

Finally we restore the Fox's reimbedding and obtain a spatial graph $\Gamma$ and a knot $K$ as the preimage.
Then we have that $r(F,\Gamma)\ge r(F',\Gamma')\ge r(F',K')=n$.
\end{proof}

\begin{proof} (of Theorem \ref{totally knotted})
This follows from Lemmas \ref{primitive} and \ref{not primitive}.
\end{proof}

\begin{proof} (of Theorem \ref{spatially})
Let $F$ be a closed surface containing $\Gamma$ such that $r(F,\Gamma)=n$.
Suppose that there exists an essential tangle decomposing sphere $S$ for $\Gamma$ with $|\Gamma\cap S|<n$.
We may assume that $F$ intersects $S$ in loops, and assume that $|F\cap S|$ is minimal.
Then the innermost disk $D$ in $S$ bounded by an innermost loop of $F\cap S$ is a compressing disk for $F$ since $S$ is an essential tangle decomposing sphere.
It follows that $r(F,\Gamma)\le |\partial D\cap \Gamma|\le |\Gamma\cap S|<n$.
\end{proof}

\section{Applications to examples}\label{determination}

\subsection{Minimally knotted handcuff graphs and theta curves}

\begin{example}
The left side of Figure \ref{3-ball} shows a pair $(F,\Gamma)$ of a genus two Heegaard surface $F$ and a minimally knotted (hence totally knotted by \cite{OT}) handcuff graph $\Gamma$ which is in a Morse bridge position.
To show $r(F,\Gamma)\ge 2$, let $D_1^i$ and $D_2^i$ be meridian disks in a handlebody $V_i$ as in Figure \ref{3-ball} and then, following Lemma \ref{n-connected}, we have a $4$-punctured sphere $F_1^i$.
Since (1) for any essential loop $l$ in $F_1^i$, $|l\cap \Gamma|\ge 2$ and (2) for any essential arc $a$ in $F_1^i$ such that $\partial a$ is contained in a single component of $\partial F_1^i$, $|a\cap \Gamma|\ge 1$, by Lemma \ref{n-connected}, $r(F,\Gamma)\ge 2$.
On the other hand, an inequality $r(\Gamma)\le bs(\Gamma)/2\le 5/2$ holds by Theorem \ref{main2}.
Hence we have $r(\Gamma)=2$.

%\begin{figure}[htbp]
%	\begin{center}
%	\includegraphics[trim=0mm 0mm 0mm 0mm, width=.4\linewidth]{handcuff.eps}
%	\end{center}
%	\caption{}
%	\label{handcuff}
%\end{figure}

\begin{figure}[htbp]
	\begin{center}
	\begin{tabular}{cc}
	\includegraphics[trim=0mm 0mm 0mm 0mm, width=.4\linewidth]{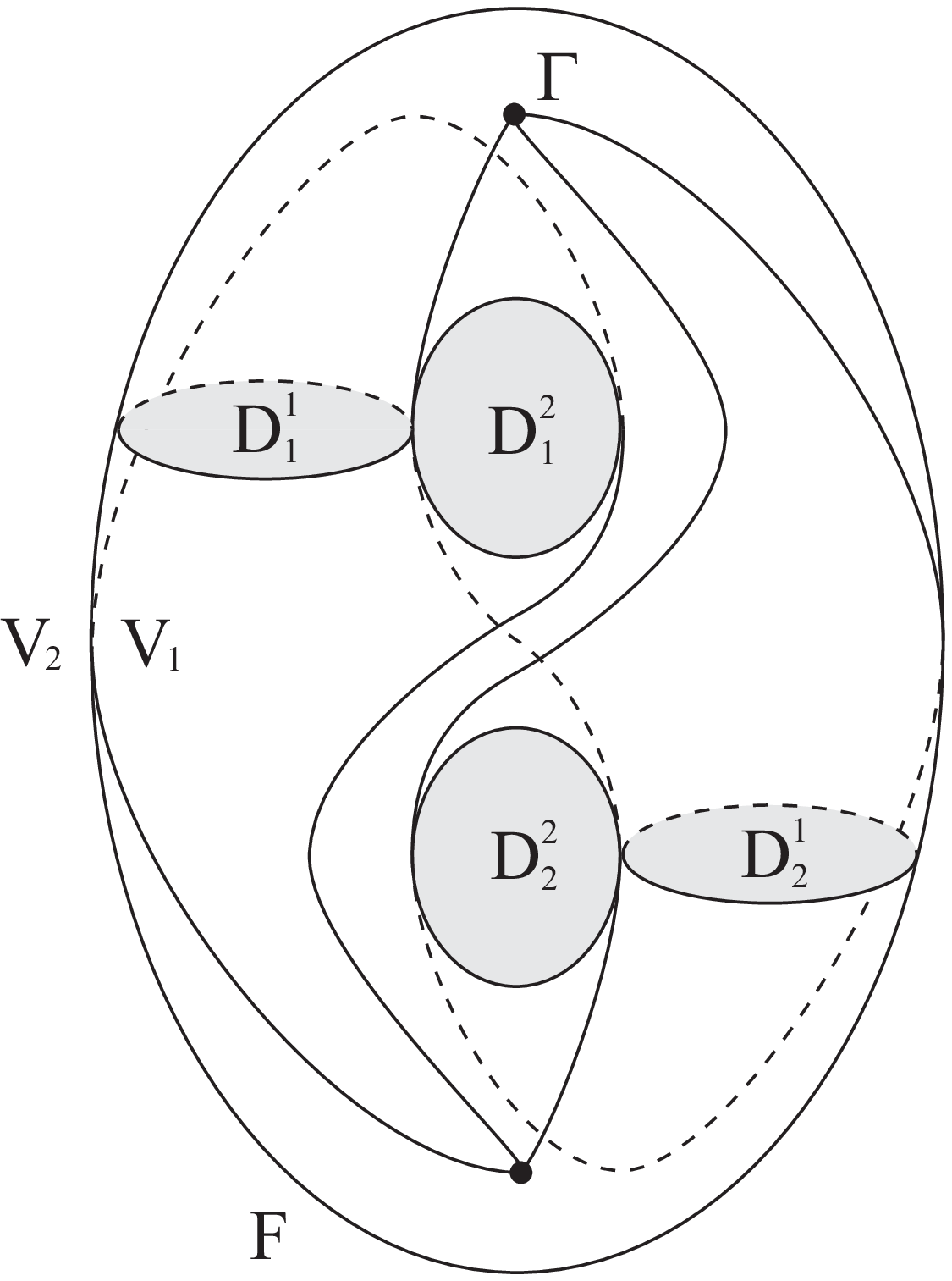}&
	\includegraphics[trim=0mm 0mm 0mm 0mm, width=.35\linewidth]{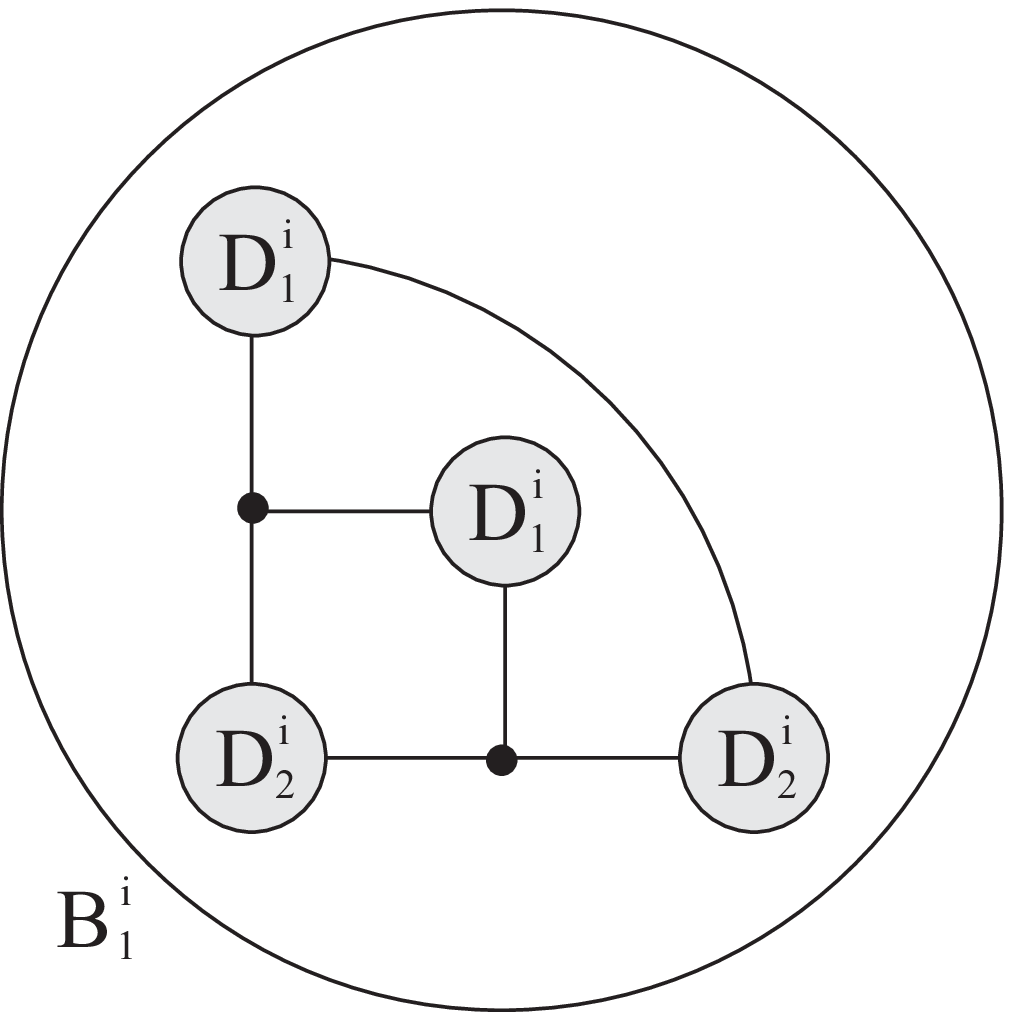}\\
	$\mathcal{D}_i$ for $(F,\Gamma)$ & $\Gamma\cap F_1^i$ on $F_1^i$
	\end{tabular}
	\end{center}
	\caption{The simplest minimally knotted handcuff graph}
	\label{3-ball}
\end{figure}

\end{example}

\begin{example}
Figure \ref{theta} shows a pair $(F,\Gamma)$ of a genus three or two closed surface $F$ and a minimally knotted Kinoshita's theta curve (\cite{K}).
By using Lemma \ref{n-connected}, $r(F,\Gamma)\ge 2$, and on the other hand, by Theorem \ref{main2}, $r(\Gamma)\le bs(\Gamma)/2\le 5/2$.
Hence we have $r(\Gamma)=2$.

\begin{figure}[htbp]
	\begin{center}
	\begin{tabular}{cc}
	\includegraphics[trim=0mm 0mm 0mm 0mm, width=.4\linewidth]{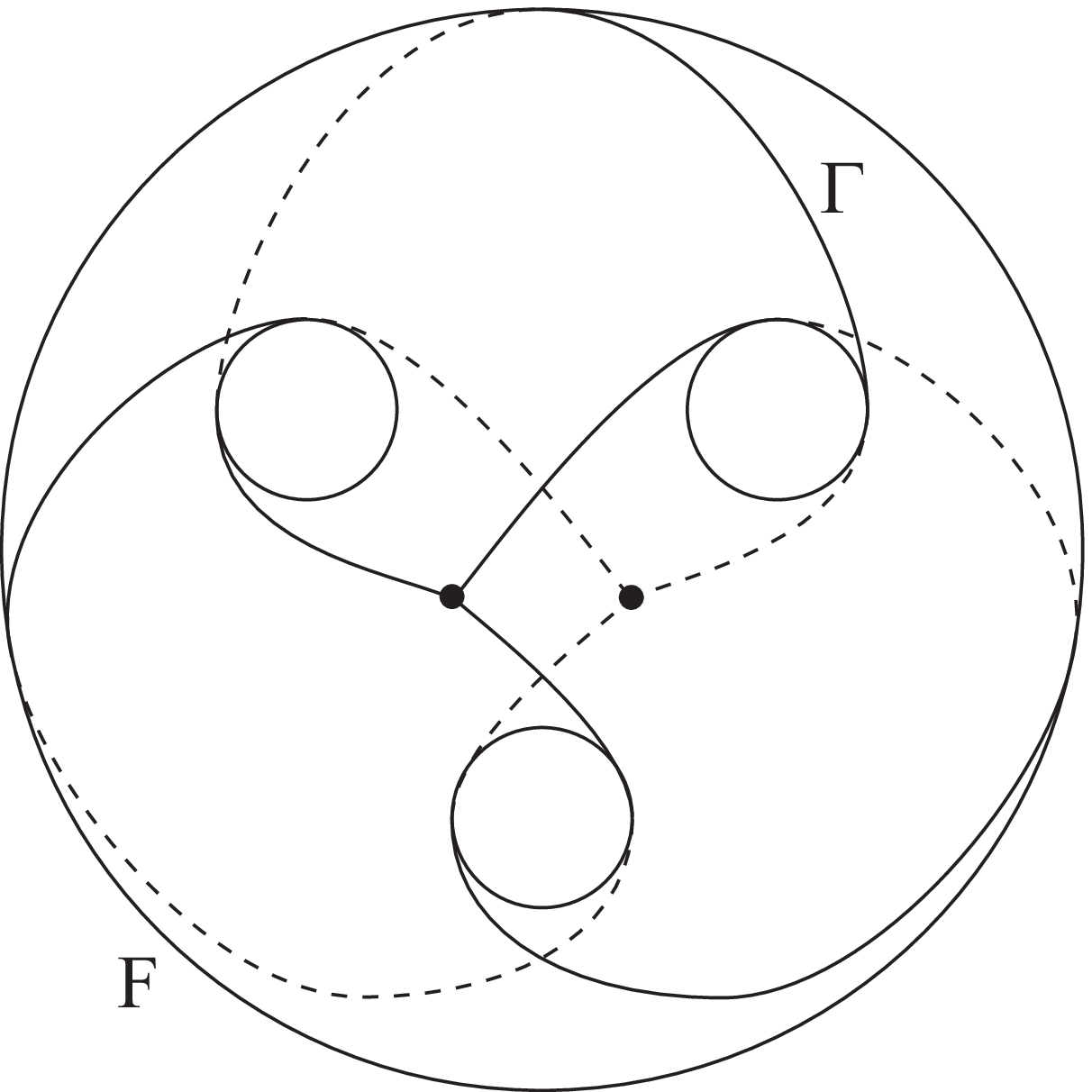}&
	\includegraphics[trim=0mm 0mm 0mm 0mm, width=.4\linewidth]{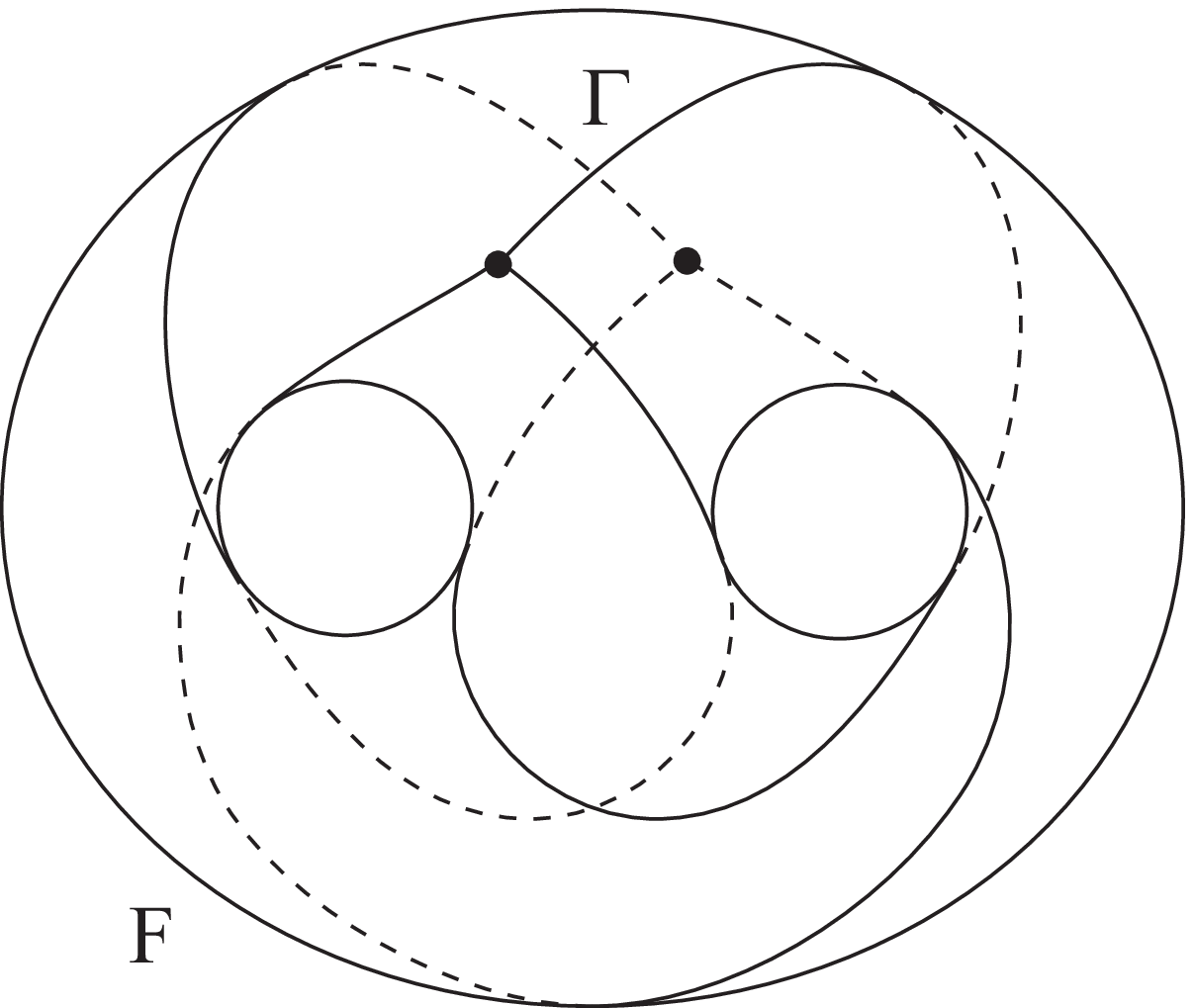}\\

	\end{tabular}
	\end{center}
	\caption{Kinoshita's theta curve on two closed surfaces}
	\label{theta}
\end{figure}

\end{example}

\subsection{Determination of the bridge number of some non-torus links}

The following theorem says that by using the representativity, we can determine the bridge string number of the link.
We use Theorem \ref{main2}, Lemmas \ref{lemma1}, \ref{lemma2} and \ref{n-connected}.

\begin{theorem}\label{non-torus}
For any positive even integer $n$, there exists a non-torus link $L$ such that $r(L)\ge n$ and $bs(L)=3n$.
\end{theorem}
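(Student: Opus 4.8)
The plan is to construct $L$ explicitly on a genus-two Heegaard surface $F$ of $S^3$, following the recipe already used in Lemma \ref{exactly}, and then to verify the three required properties: $r(L)\ge n$, $bs(L)\le 3n$, and $bs(L)\ge 3n$ (which in combination with $r(L)\ge n$ and Theorem \ref{main2} actually forces $bs(L)=2r(L)$-type equalities to line up), and finally non-torusness. Concretely, on the genus-two surface $F$ with meridians $m_0,m_1,m_2$ and longitudes $l_0,l_1,l_2$ I would take a curve system of the form $L = a_0 m_0 + a_1 m_1 + a_2 m_2 + b_0 l_0 + b_1 l_1 + b_2 l_2$, smoothed all in the $+$ direction, with the coefficients chosen (as in the $n=2$ model $L=7m_0+7m_1+7m_2+2l_0+2l_1+2l_2$ pictured in Figure \ref{non-torus_fig}, rescaled so the meridian/longitude counts produce intersection number exactly $3n$ with the cheapest compressing disk) so that: some meridian disk $\partial D$ meets $L$ in exactly $3n$ points, while every essential loop in the planar pieces $F^i_\pm$ obtained by cutting $F$ along the chosen meridian and longitude disks meets $L$ in at least $3n/2 \cdot 2 = 3n$... — more carefully, I want the normalization where the Lemma \ref{n-connected} bound gives $r(F,L)\ge 3n$ but $r(F,L)=3n$ is also achieved; wait, I actually need $r(L)\ge n$ only, so it suffices to engineer the weaker lower bound $r(F,L)\ge n$ and separately force $bs(L)=3n$. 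So I would instead choose the coefficients so that the minimal meridian–disk intersection is exactly $3n$ and, via Lemma \ref{n-connected} applied to the two disk systems $\mathcal D_1=\{$meridian disks$\}$ and $\mathcal D_2=\{$longitude disks$\}$, every essential loop in each complementary planar surface meets $L$ in $\ge n$ points and every essential arc meets it in $\ge n/2$ points; this gives $r(L)\ge r(F,L)\ge n$.

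For the upper bound $bs(L)\le 3n$: the surface $F$ is a genus-two Heegaard surface, and a link carried on a Heegaard surface with the above coefficient pattern admits an explicit bridge position whose string number equals the intersection number of $L$ with a suitable collection of meridian and longitude disks — equivalently, push $L$ slightly off $F$ into a bridge position adapted to the handle structure; I would compute the relevant $|L\cap S^2|$ and check it equals $3n$ for the chosen coefficients. For the lower bound $bs(L)\ge 3n$: here is where Theorem \ref{main2} does the work — since $r(L)\ge r(F,L)\ge n$ would only give $bs(L)\ge 2n$, I need a sharper representativity estimate, namely that $r(F,L)\ge 3n/2$; then Theorem \ref{main2} gives $bs(L)\ge 2r(L)\ge 2r(F,L)\ge 3n$. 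So the real content is: choose the coefficients $a_i,b_j$ so that simultaneously (a) $r(F,L)\ge 3n/2$ via Lemma \ref{n-connected} (all essential loops in the complementary planar surfaces meet $L$ in $\ge 3n/2$ points, all essential arcs in $\ge 3n/4$ points — note $n$ even but $3n/2$ may be odd, so I would arrange $n$ so that these counts are integers, or absorb parity via ceiling/floor as in Lemma \ref{exactly}), and (b) the cheapest bridge position still has exactly $3n$ strings, pinning $bs(L)=3n$.

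The main obstacle I anticipate is the simultaneous satisfaction of (a) and (b): pushing $r(F,L)$ up to $3n/2$ forces all six coefficient-weighted intersection numbers to be large, which tends to push $bs(L)$ strictly above $3n$ unless the coefficients are balanced just right; getting a clean integer solution (respecting the parity of $n$ and the ceiling/floor bookkeeping) is the delicate combinatorial step. The secondary obstacle is proving $L$ is non-torus: I would argue that a torus link lies on an unknotted torus and has representativity (in the knot sense of Theorem \ref{knots}(2)) equal to $\min\{p,q\}$, or more robustly, that its exterior is Seifert fibered; then I would exhibit an incompressible non-boundary-parallel surface in $S^3\setminus L$ — for instance the genus-two surface $F$ itself restricted to the link exterior, or one of the complementary planar surfaces — which a torus link exterior does not contain, or alternatively invoke Theorem \ref{spatially}: if $r(L)=n\ge 4$ then $L$ is spatially $n$-connected, whereas a nontrivial torus link admits an essential annulus and hence a low-complexity essential tangle decomposition, a contradiction. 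I would flesh out whichever of these is cleanest for the explicit $L$ at hand.
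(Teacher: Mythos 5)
Your construction and your treatment of $r(L)\ge n$ and $bs(L)\le 3n$ match the paper (the paper takes $L(p,q)=qm_0+qm_1+qm_2+pl_0+pl_1+pl_2$ on a genus two Heegaard surface with $n=2p$ and $q>3p$, and gets $r(F,L)\ge n$ from Lemma \ref{n-connected} and $bs(L)\le 6p=3n$ from the obvious bridge position). The genuine gap is in the lower bound $bs(L)\ge 3n$. You propose to get it purely from Theorem \ref{main2} by arranging $r(F,L)\ge 3n/2$, so that $bs(L)\ge 2r(L)\ge 3n$. But then you must simultaneously certify $bs(L)\le 3n$ for that same link, i.e.\ you need a non-torus link realizing $bs=2r$ exactly with a $3n$-string bridge sphere in hand; you flag this as ``the delicate combinatorial step'' and never resolve it, and for the natural curve-system links it fails: making every compressing disk of $F$ meet $L$ in at least $3n/2$ points forces the handle-dual disk intersections up, and the explicit bridge positions you can exhibit then have more than $3n$ strings. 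Indeed, for the paper's link one has $r(F,L)=n$, so Theorem \ref{main2} only yields $bs(L)\ge 2n$, and no choice of coefficients in this family is shown (or likely) to give both $r(F,L)\ge 3n/2$ and $bs(L)\le 3n$. So as written the proof of $bs(L)=3n$ does not go through.

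What the paper actually does for the lower bound is a thin-position style argument that you are missing entirely: assume $bs(L)<3n=6p$, put $(F,L)$ in Morse bridge position with no inessential saddles (Lemmas \ref{lemma1}, \ref{lemma2}); if some level sphere met $F$ in three or more parallel classes of essential loops, the innermost-disk surgery of Theorem \ref{main2} would give a compressing disk meeting $L$ in at most $bs(L)/3<n$ points, contradicting $r(F,L)\ge n$, so each level sphere carries at most two parallel classes of essential loops; since $g(F)=2$, \cite[Lemma 2.3 (3)]{MO2} provides a Type IV saddle, whose pair of pants caps off to level disks, and after surgery one of the resulting compressing disks must lie in $V_2$ and meet $L$ in at most $bs(L)<6p<2q$ points --- contradicting the Lemma \ref{n-connected} estimate that every compressing disk on the $V_2$ side meets $L$ at least $2q$ times. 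The asymmetric two-sided count ($\ge 2p$ in $V_1$ versus $\ge 2q>6p$ in $V_2$) is precisely what converts the crude bound $bs\ge 2r$ into $bs\ge 3n$, and it is the key idea absent from your proposal. (Minor additional point: your fallback for non-torusness via Theorem \ref{spatially} needs an essential tangle decomposing \emph{sphere}, which an essential annulus in a torus link exterior does not directly provide; an argument via the Seifert fibered structure or via the equality $bs=2r$ for torus links would be cleaner.)
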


\begin{proof}
We use the notation defined in Lemma \ref{exactly}.
Let $S^3=V_1\cup_F V_2$ be a genus two Heegaard splitting and $\mathcal{D}_i=\{D_{\lambda}^i \}$ $(i=1,2,\ \lambda=0,1,2)$ be a collection of meridian disks in $V_i$, where $D_{\lambda}^1$ is bounded by $m_{\lambda}$ and $D_{\lambda}^2$ is bounded by $l_{\lambda}$.

%such that $|\partial D_{\lambda}^1\cap \partial D_{\lambda}^2|=1$ $(\lambda=1,2,3)$, $|\partial D_1^1\cap \partial D_2^2|=|\partial D_2^1\cap \partial D_3^2|=|\partial D_3^1\cap \partial D_1^2|=0$ and $|\partial D_1^1\cap \partial D_3^2|=|\partial D_2^1\cap \partial D_1^2|=|\partial D_3^1\cap \partial D_2^2|=1$.
%See Figure \ref{non-torus_fig}.

We take $p\ge 1$ parallel copies of $\partial D_{\lambda}^2$ and $q>3p$ parallel copies of $\partial D_{\lambda}^1$.
Let $L(p,q)$ be a link in $F$ which is obtained from these loops by smoothing the intersection in a same direction, namely,
\begin{eqnarray*}
L(p,q) & = & qm_0+qm_1+qm_2 + p l_0 + p l_1 + p l_2.
\end{eqnarray*}
See Figure \ref{non-torus_fig} for an example of $L(2,7)$.

\begin{figure}[htbp]
	\begin{center}
	\begin{tabular}{ccc}
	\includegraphics[trim=0mm 0mm 0mm 0mm, width=.3\linewidth]{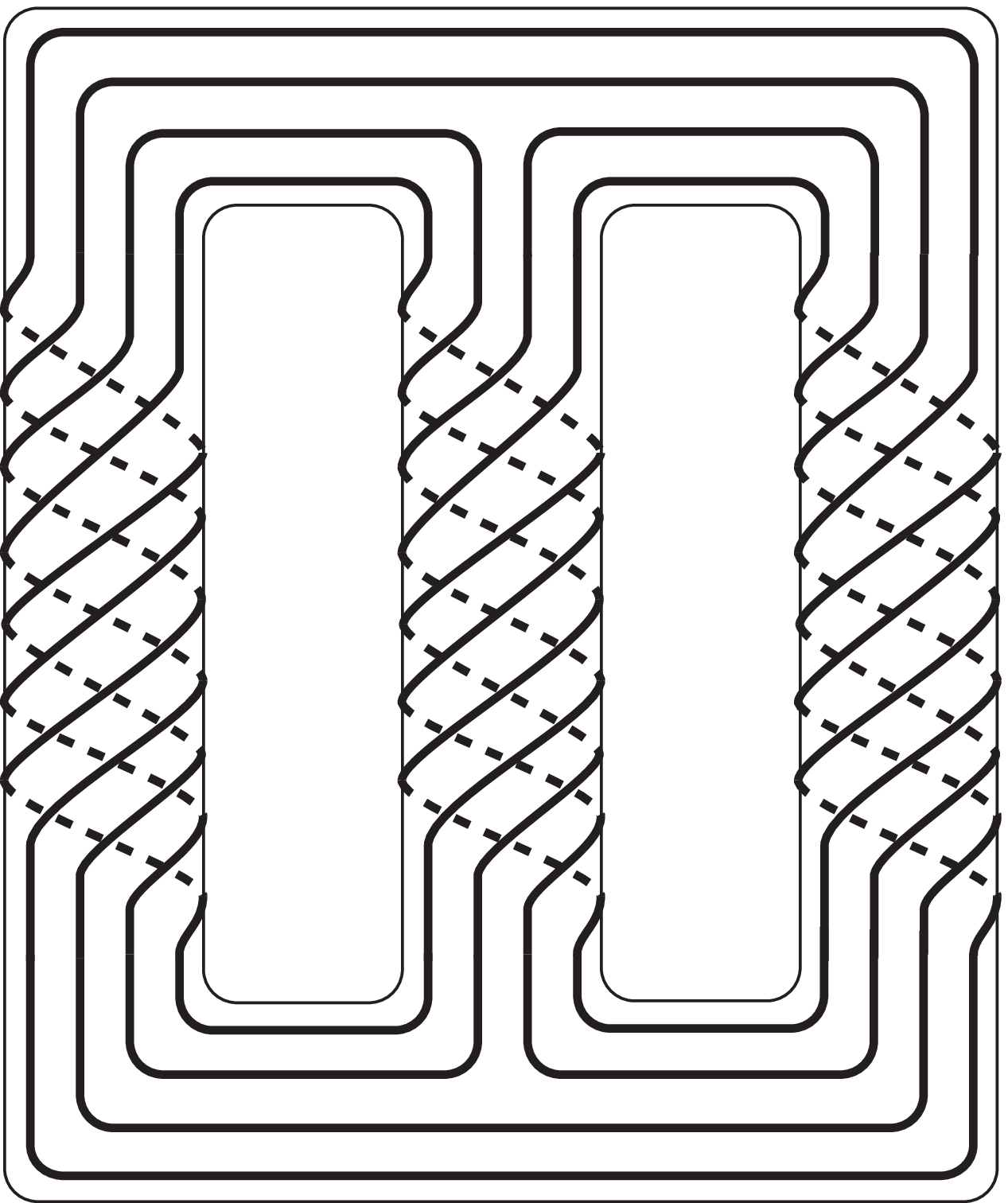}&
	\includegraphics[trim=0mm 0mm 0mm 0mm, width=.3\linewidth]{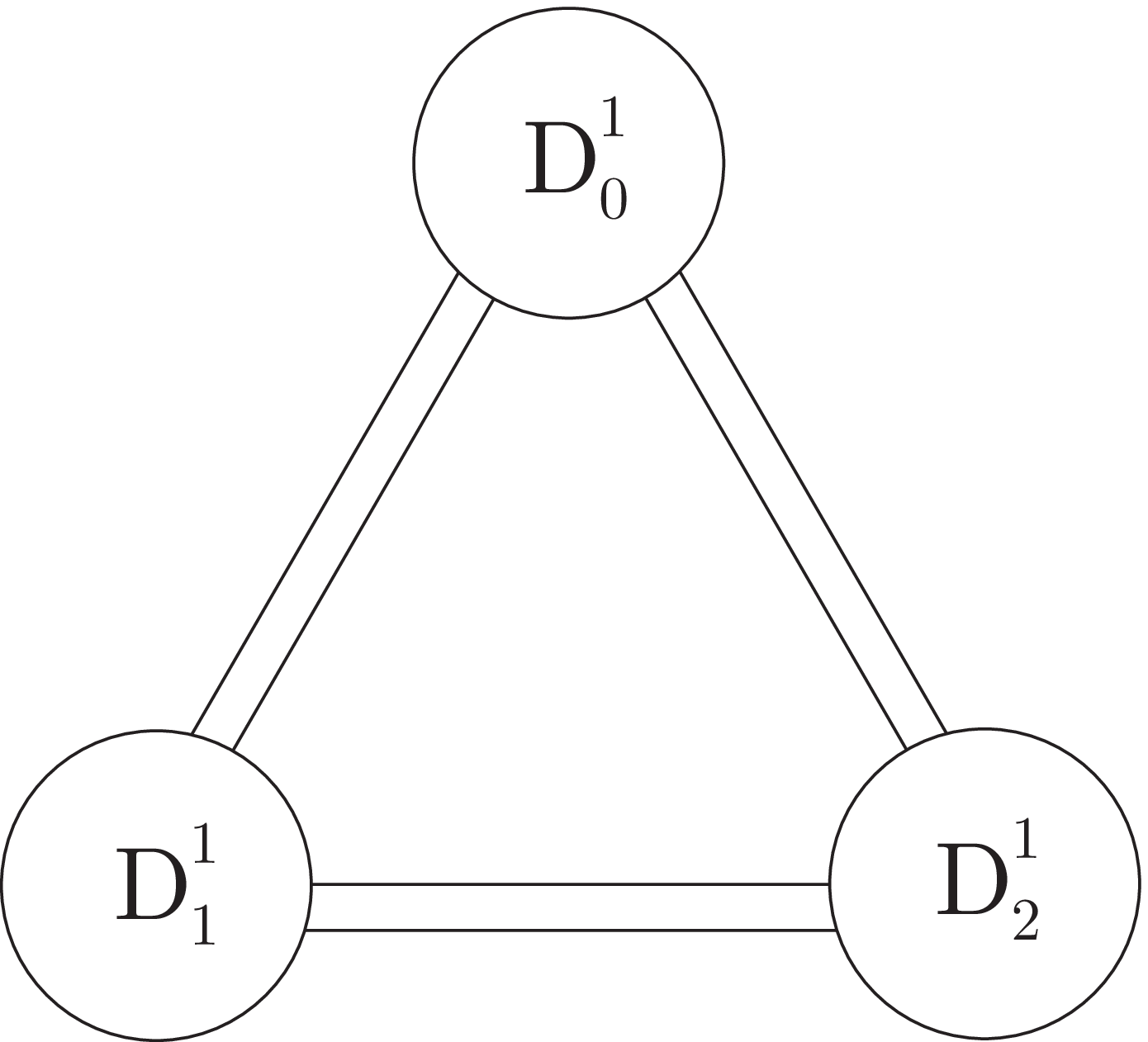}&
	\includegraphics[trim=0mm 0mm 0mm 0mm, width=.3\linewidth]{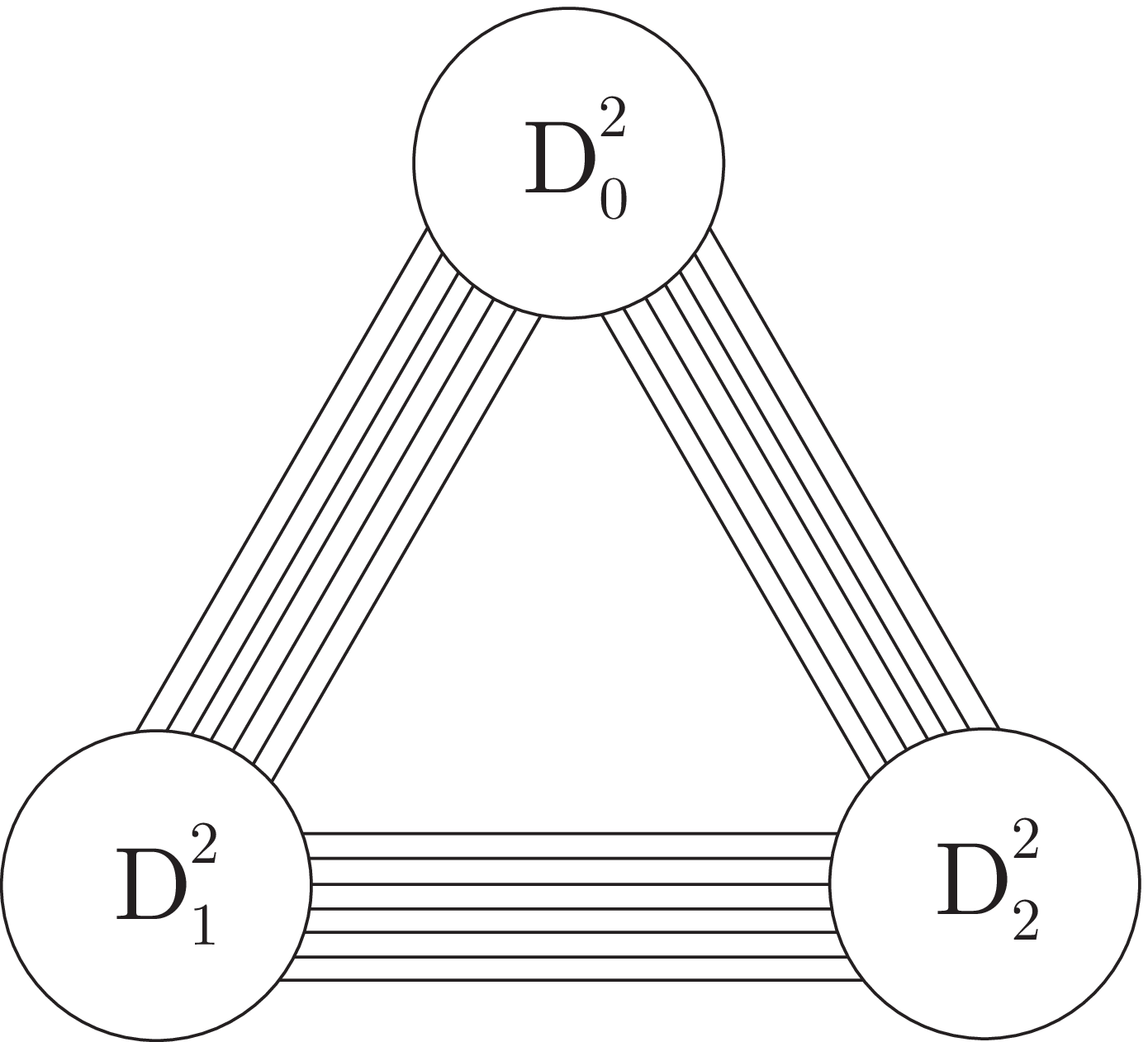}\\
	$L(2,7)$ & $F_{\mu}^1$ & $F_{\mu}^2$\\
	\end{tabular}
	\end{center}
	\caption{Non-torus link $L(2,7)=7m_0+7m_1+7m_2 + 2l_0+2l_1+2l_2$}
	\label{non-torus_fig}
\end{figure}

Put $n=2p$.
Following Lemma \ref{n-connected}, let $\mathcal{F}_i=\{F_{\mu}^i\}$ be a collection of the components of $cl(F-\bigcup_{\lambda} N(\partial D_{\lambda}^i))$.
Then $F_{\mu}^i$ is a pair of pants, $L \cap F_{\mu}^1$ consists of three classes of $p$ parallel arcs which connect two distinct components of $\partial F_{\mu}^1$, and $L \cap F_{\mu}^2$ consists of three classes of $q$ parallel arcs which connect two distinct components of $\partial F_{\mu}^2$.
Therefore, it holds that (1) for any essential loop $l$ in $F_{\mu}^1$ (resp. $F_{\mu}^2$), $|l\cap L|\ge 2p$ (resp. $|l\cap L|\ge 2q$), (2) for any essential arc $a$ in $F_{\mu}^1$ (resp. $F_{\mu}^2$) such that $\partial a$ is contained in a single component of $\partial F_{\mu}^1$ (resp. $\partial F_{\mu}^2$), $|a\cap L|\ge p$ (resp. $|a\cap L|\ge q$).
Hence, by Lemma \ref{n-connected}, $r(F,L)\ge 2p=n$ in $V_1$ and $r(F,L)\ge 2q=3n$ in $V_2$, thus we have $r(F,L)\ge n$.
(In fact, $r(F,L)=n$.)

%we construct a graph $G_{\mu}^i$ $(\mu=1,2,\ i=1,2)$.
%Then $G_{\mu}^1$ is a multigraph $K_3^p$ which is obtained from a complete graph on 3 vertices by taking $p$ multiple edges for each edge, and $G_{\mu}^2$ is also a multigraph $K_3^q$.
%Since $G_{\mu}^1$ is $2p$-connected and $G_{\mu}^2$ is $2q$-connected, by Lemma \ref{n-connected}, we have $r(F,L)\ge 2p=n$.

We see $bs(L)\le 6p$ from Figure \ref{non-torus_fig}.
To show $bs(L)=3n$, suppose that $bs(L)<6p=3n$ and $(S^3,L)=(B_+,L_+)\cup_S(B_-,L_-)$ be a bridge tangle decomposition with $bs(L)<3n$.
We apply Lemma \ref{lemma2} to a pair $(F,L)$ of the genus two Heegaard surface $F$ and the link $L$ with the bridge tangle decomposition $(B_+,L_+)\cup_S(B_-,L_-)$.
Then $(F,L)$ is in a Morse bridge position with respect to the standard height function $h$ and $F$ has no inessential saddle point.
If there exists a level sphere $h^{-1}(t)$ such that $h^{-1}(t)\cap F$ contains three or more parallel classes of loops which are essential in $F$, then $r(F,L)\le bs(L)/3<3n/3=n$ in the same way as the proof of Theorem \ref{main2}.
However this contradicts $r(F,L)\ge n$.
%'±'±'Ü'ÅOK
Hence we have the condition: 

(*) {\em for each regular value $t$, $h^{-1}(t)\cap F$ contains at most two parallel classes of loops which are essential in $F$.}

In the following, we review types of saddle points which are defined in \cite{MO2}.
Let $p$ be a saddle point of $F$ which corresponds the critical value $t_p\in \Bbb{R}$.
Let $X_p$ be a pair of pants component of $F\cap h^{-1}([t_p-\epsilon, t_p+\epsilon])$ containing $p$ for a fixed sufficiently small positive real number $\epsilon$.
Let $C_1$, $C_2$ and $C_3$ be the boundary components of $X_p$, where we assume that $C_1$ and $C_2$ are contained in the same level $h^{-1}(t_p\pm\epsilon)$, and $C_3$ is contained in the another level $h^{-1}(t_p\mp\epsilon)$.
See Figure \ref{saddle_fig}.
We call a saddle point $p$
\begin{enumerate}
\item {\em Type I} if all of $C_1$, $C_2$ and $C_3$ are inessential in $F$,
\item {\em Type II} if exactly one of $C_1$ and $C_2$ is essential and $C_3$ is essential in $F$,
\item {\em Type III} if both of $C_1$ and $C_2$ are essential and $C_3$ is inessential in $F$,
\item {\em Type IV} if all of $C_1$, $C_2$ and $C_3$ are essential in $F$.
\end{enumerate}

\begin{figure}[htbp]
	\begin{center}
	\includegraphics[trim=0mm 0mm 0mm 0mm, width=.5\linewidth]{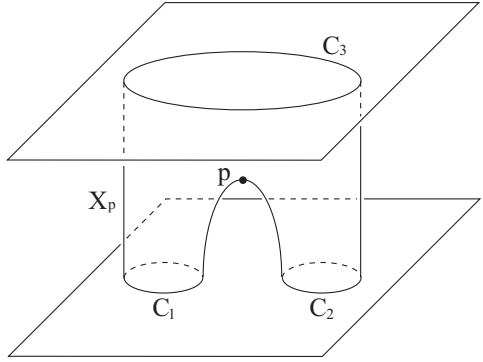}
	\end{center}
	\caption{a pair of pants component $X_p$ and three boundary components $C_1$, $C_2$ and $C_3$}
	\label{saddle_fig}
\end{figure}

We note that saddle points of Type I and II are inessential, and saddle points of Type III and IV are essential.
By \cite[Lemma 2.3 (3)]{MO2}, there exists a saddle point $p$ of Type IV since the genus of $F$ is greater than one.
Then each essential loop $C_i$ bounds a disk $D_i$ in $h^{-1}(t_p\pm\epsilon)$ so that $D_1\cup D_2\cup D_3\cup X_p$ forms a 2-sphere which is isotopic to a level 2-sphere $h^{-1}(t_p)$ in $h^{-1}([t_p-\epsilon, t_p+\epsilon])$, where $t_p$ is the critical value corresponding to $p$ and $X_p$ is a pair of pants component of $F\cap h^{-1}([t_p-\epsilon, t_p+\epsilon])$ containing $p$.
See Figure \ref{2-sphere}.
%We remark that there exists no critical point of $K$ in $h^{-1}([t_p-\epsilon, t_p+\epsilon])$ since we chose a fixed sufficiently small positive real number $\epsilon$.

\begin{figure}[htbp]
	\begin{center}
	\includegraphics[trim=0mm 0mm 0mm 0mm, width=.5\linewidth]{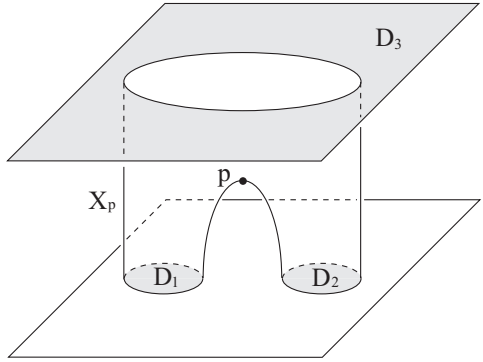}
	\end{center}
	\caption{$D_1\cup D_2\cup D_3\cup X_p$ forms a 2-sphere}
	\label{2-sphere}
\end{figure}

\begin{claim}
$F\cap \text{int}D_3$ does not contain a loop which is essential in $F$.
\end{claim}

\begin{proof}
If $F\cap \text{int}D_3$ contains a loop which is essential in $F$, then $h^{-1}(t_p\pm\epsilon)-(D_1\cup D_2)$ also contains a loop which is essential in $F$ and not parallel to neither $C_1$ nor $C_2$.
This contradicts the condition (*).
\end{proof}

\begin{claim}
At least one of $F\cap \text{int}D_1$ and $F\cap \text{int}D_2$ does not contain a loop which is essential in $F$.
\end{claim}

\begin{proof}
If both of $F\cap \text{int}D_1$ and $F\cap \text{int}D_2$ contain a loop which is essential in $F$, then $h^{-1}(t_p\mp\epsilon)-D_3$ contains mutually non-parallel two loops which are essential in $F$ and not parallel to $C_3$.
This contradicts the condition (*).
\end{proof}

Without loss of generality, we assume that $F\cap \text{int}D_2$ does not contain a loop which is essential in $F$.
In the same way as the proof of Theorem \ref{main2}, by cutting and pasting $D_3$ and $D_2$, we have new disks $D_3'$ and $D_2'$ which are bounded by $C_3$ and $C_2$ respectively, and $\text{int}D_3'\cap F=\emptyset$ and $\text{int}D_2'\cap F=\emptyset$.

Then it follows that (a) $D_3\subset V_1$ and $D_2\subset V_2$ or (b) $D_3\subset V_2$ and $D_2\subset V_1$.
In case (a), we have
\[
|\partial D_2\cap L|\le bs(L)<3n=6p<2q.
\]
However this contradicts that $r(F,L)\ge 2q$ in the side of $V_2$.
In case (b), similarly we have
\[
|\partial D_3\cap L|\le bs(L)<3n=6p<2q.
\]
However this also contradicts that $r(F,L)\ge 2q$ in the side of $V_2$.
%it follows that $r(F,L)< 3n=6p<2q$ in both sides $V_1$ and $V_2$.
%However this contradicts that $r(F,L)\ge 2q$ in the side of $V_2$.
Therefore we have $bs(L)=3n$.
%
%Suppose that $L$ has a thin position which is not a bridge position.
%Then by \cite{T}, \cite{Wu} or \cite[Lemma 2.1]{MO2}, there exists a thin level sphere $S_0$ with $|S_0\cap L|<bs(L)$ which gives an essential tangle decomposition of $L$.
%We may assume that $F\cap S_0$ consists of loops which are essential in both $F$ and $S_0$.
%If there exist two or more loops of $F\cap S_0$, then 
\end{proof}

\section{Related problems}

\subsection{Waist and the representativity of knots}

In \cite{MO2}, the {\em waist} of a non-trivial knot $K$ is defined as
\[
waist(K)=\max_{F\in\mathcal{F}} \min_{D\in\mathcal{D}_F} |D\cap K|,
\]
where $\mathcal{F}$ is the set of all closed incompressible surfaces in $S^3-K$ and $\mathcal{D}_F$ is the set of all compressing disks for $F$ in $S^3$.
We define that $waist(K)=0$ for the trivial knot $K$.
Then it has been shown in \cite{MO2} that
\[
\displaystyle waist(K)\le \frac{bs(K)}{3}.
\]
With Theorem \ref{main2}, the following question occurred.

\begin{problem}
Does it hold that $waist(K)\le r(K)$ for a knot $K$?
\end{problem}

\subsection{Closed genus of knots}

Finally, we introduce a new numerical invariant for knots.
By Theorem \ref{knots}, we know that $r(K)\ge 2$ for any non-trivial knot $K$, and hence we can define the {\em closed genus} of a non-trivial knot $K$ as
\[
cg(K)=\min_{F\in \mathcal{F}} \{g(F)|F\supset K,\ r(F,K)\ge 2\},
\]
where $\mathcal{F}$ is the set of closed surfaces.
Then $cg(K)=1$ if and only if $K$ is a torus knot or a cable knot.
%It follows from \cite{HT}, \cite{HT2} that the closed genus of a 2-bridge knot $K$ is equal to the minimal length of all expansions of all fractions corresponding to $K$.

Generally, it holds that $cg(K)\le 2g(K)$ for any non-trivial knot $K$.
If $cg(K)<2g(K)$ holds, then $K$ satisfies the Neuwirth conjecture (\cite{N}): for any non-trivial knot $K$, there exists a closed surface $S$ containing $K$ non-separatively such that $S\cap E(K)$ is essential in $E(K)$.

%Here, we remark that $cg(K)<2g(K)$ for a torus knot or a cable knot $K$.

%\begin{problem}
%For a given non-trivial knot $K$, determine the closed genus $cg(K)$.
%\end{problem}

\begin{problem}
Does a closed surface $F$ which gives $cg(K)$ give also $r(K)$?
\end{problem}

\bigskip

%\noindent{\bf Acknowledgement.}

\bibliographystyle{amsplain}

\end{document}